\newtheorem{theorem}{Theorem}
\newtheorem{theoremA}{Theorem}
\newtheorem{conjecture}{Conjecture}
\newtheorem{lemma}{Lemma}
\newtheorem{problem}{Problem}
\newtheorem{proposition}{Proposition}
\newtheorem{remark}{Remark}
\begin{document}

\title{Point-sets in general position with many \\similar copies of a pattern \footnotetext{Some preliminary results appeared in \cite{AF99}}}
\author{Bernardo M. \'{A}brego and Silvia Fern\'{a}ndez-Merchant\\{\small Department of Mathematics}\\{\small California State University, Northridge,}\\{\small 18111 Nordhoff St, Northridge, CA 91330-8313.}\\{\footnotesize {email:\texttt{\{bernardo.abrego, silvia.fernandez
\}@csun.edu}}}}
\date{\today, v.17}
\maketitle

\begin{abstract}
For every pattern $P$, consisting of a finite set of points in the
plane, $S_{P}(n,m)$ is defined as the largest number of similar
copies of $P$ among sets of $n$ points in the plane without $m$
points on a line. A general construction, based on iterated
Minkovski sums, is used to obtain new lower bounds for $S_{P}(n,m)$
when $P$ is an arbitrary pattern. Improved bounds are obtained when
$P$ is a triangle or a regular polygon with few sides. It is also
shown that $S_{P}(n,m)\geq n^{2-\varepsilon}$ whenever
$m(n)\rightarrow \infty$ as $n \rightarrow\infty$. Finite sets with
no collinear triples and not containing the 4 vertices of any
parallelogram are called \emph{parallelogram-free}. The more
restricted function $S_{P} ^{\nparallel }(n)$, defined as the
maximum number of similar copies of $P$ among parallelogram-free
sets of $n$ points, is also studied. It is proved that $\Omega(n\log
n)\leq S_{P}^{\nparallel}(n)\leq O(n^{3/2})$.\medskip

\noindent \textbf{Keywords:} similar copy, pattern, general
position, collinear points, parallelogram-free, Minkovski Sum.

\end{abstract}


\section{Introduction}

Sets $A$ and $B$ in the plane are \emph{similar}, denoted by $A\sim B$, if
there is an orientation-preserving isometry followed by a dilation that takes
$A$ to $B$. Identifying the plane with $\mathbb{C}$, the set of complex
numbers, $A\sim B$ if there are complex numbers $w$ and $z\neq0$ such that
$B=zA+w$. Here, $zA=\{za:a\in A\}$ and $A+w=\{a+w:a\in A\}$.

Consider a finite set of points $P$ in the plane, $|P|\geq3$. We refer to $P$
as a \emph{pattern} ($P$ is usually fixed). For any finite set of points $Q$,
we define $S_{P}\left(  Q\right)  $ to be the number of similar copies of $P$
contained in $Q$. More precisely,%
\[
S_{P}\left(  Q\right)  =\left\vert \left\{  P^{\prime}\subseteq Q:P^{\prime
}\sim P\right\}  \right\vert .
\]
The main goal of this paper is to explicitly construct point sets $Q$ in
\emph{general position} with a large number of similar copies of the pattern
$P$, that is, with large $S_{P}(Q)$. By general position we mean to forbid
triples of collinear points, although we also consider the restriction of
allowing at most $m$ points on a line, $m\geq3$, and the stronger restriction
of not allowing collinear points or parallelograms in $Q$.%
\begin{figure}
[h]
\begin{center}
\includegraphics[
height=3.5561in,
width=3.3745in
]%
{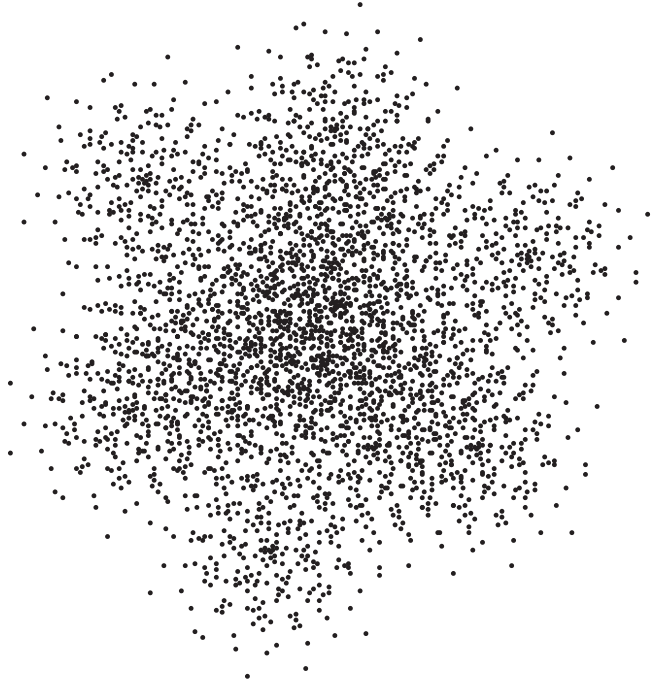}%
\caption{Point-set in general position with many triples spanning equilateral
triangles.}%
\label{fig: minkovski}%
\end{center}
\end{figure}

To explain the motivation of this paper we first turn to the original problem.
Erd\H{o}s and Purdy \cite{EP71}-\cite{EP76} posed the problem of maximizing
the number of similar copies of $P$ contained in a set of $n$ points in the
plane. That is, to determine the function%
\[
S_{P}\left(  n\right)  =\max_{\left\vert Q\right\vert =n}S_{P}\left(
Q\right)  ,
\]
where the maximum is taken over all point-sets $Q\subseteq\mathbb{C}$ with $n$
points. Elekes and Erd\H{o}s \cite{EE94} noted that $S_{P}\left(  n\right)
\leq n\left(  n-1\right)  $ for any pattern $P$. They gave a quadratic lower
bound for $S_{P}(n)$ when $\left\vert P\right\vert =3$ or when all the
coordinates of the points in $P$ are algebraic numbers. They also proved a
slightly subquadratic lower bound for all other patterns $P$. (The precise
statements can be found in Section \ref{sec: when m grows}.) Later, Laczkovich
and Ruzsa \cite{LR97} characterized precisely those patterns for which
$S_{P}\left(  n\right)  =\Theta(n^{2})$. However, the coefficient of the
quadratic term is not known for any non-trivial pattern, not even for the
simplest case of $P$ an equilateral triangle \cite{AF00}. Elekes and the
authors \cite{AFE04} investigated the structural properties of the $n$-sets
$Q$ that achieve a quadratic number of similar copies of a pattern $P$. We
proved that those sets contain large lattice-like structures, and therefore,
many collinear points. In particular, the next result was obtained.

\begin{theoremA}
[ \'{A}brego et al. \cite{AFE04}]\label{th:Abregoetal} For every positive
integer $m$ and every real $c>0$, there is a threshold function $N_{0}%
=N_{0}(c,m)$ with the following property: if $n\geq N_{0}$ and $Q$ is an
$n$-set with $S_{P}(Q)\geq cn^{2}$, then $Q$ has $m$ points on a line forming
an arithmetic progression.
\end{theoremA}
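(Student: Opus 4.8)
The plan is to reduce this geometric statement to a purely additive one and then apply the structure theory of sumsets. Since $|P|\geq 3$, fix three distinct points $a,b,c\in P$ and write $c=a+t(b-a)$ with $t\in\mathbb{C}\setminus\{0,1\}$. Every orientation-preserving similarity $f(x)=zx+w_{0}$ is determined by the ordered images of $a,b$, so each of the $\geq cn^{2}$ similar copies of $P$ in $Q$ is determined by the images $u,v\in Q$ of $a,b$; moreover the image $w\in Q$ of $c$ then satisfies $w=(1-t)u+tv$. Hence the map sending a copy to the triple $(u,v,w)$ is injective, and we obtain at least $cn^{2}$ solutions in $Q^{3}$ of the single $\mathbb{C}$-linear (equivalently $\mathbb{R}$-linear, viewing $\mathbb{C}\cong\mathbb{R}^{2}$) equation $(1-t)u+tv=w$. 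Writing $A=(1-t)Q$ and $B=tQ$, this says the additive equation $a'+b'=w$ with $a'\in A$, $b'\in B$, $w\in Q$ has $\geq cn^{2}$ solutions in the torsion-free group $(\mathbb{R}^{2},+)$.

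Next I would feed this into the Balog--Szemer\'edi--Gowers theorem. Many solutions of $a'+b'=w$ force a large subset $A'\subseteq A$ with $|A'|\geq c_{1}n$ and small doubling $|A'+A'|\leq c_{2}|A'|$, where $c_{1},c_{2}$ depend only on $c$. Applying Freiman's theorem in $\mathbb{R}^{2}$, the set $A'$ lies in a proper generalized arithmetic progression $\Pi$ of rank $d$ and size $|\Pi|\leq c_{3}|A'|=O(n)$, with $d$ and $c_{3}$ depending only on $c$ and hence bounded independently of $n$. Undoing the dilation by $1-t$, a subset $Q'\subseteq Q$ with $|Q'|\geq c_{1}n$ is contained in a proper GAP of bounded rank $d$ and size $O(n)$.

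Finally I would extract a genuine collinear arithmetic progression. Since $\Pi$ is proper of size $\Theta(n)$ and rank $d$, its longest side has length $L\geq(\text{size})^{1/d}=\Omega(n^{1/d})$. Writing $\Pi$ in progression coordinates as a box whose longest axis, in a fixed direction $d_{0}\neq 0$, has length $L$, the number of columns parallel to that axis is $O(n)/L=O(n^{1-1/d})$, so by averaging some column contains $\Omega(n^{1/d})$ points of $Q'$ among its $L=\Theta(n^{1/d})$ lattice points, i.e. has density $\Omega(1)$. Identifying this column with an interval of integers and applying Szemer\'edi's theorem, for $n$ large its $\Omega(1)$-dense subset coming from $Q'$ contains an arithmetic progression of length $m$; this progression is collinear (all its points lie on one line in direction $d_{0}$) and lies in $Q'\subseteq Q$, as required. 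Every threshold incurred (BSG, Freiman, and the column length needed for Szemer\'edi) depends only on $c$ and $m$, yielding $N_{0}=N_{0}(c,m)$.

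The main obstacle is the middle step: converting the count "$\geq cn^{2}$ solutions of $(1-t)u+tv=w$" into honest additive structure with \emph{bounded} Freiman rank. One must invoke BSG and Freiman with quantitative control so that the rank $d$ and the constant $c_{3}$ depend only on $c$ and not on $n$; otherwise the bound $L\geq(\text{size})^{1/d}$ degrades and the final extraction produces fewer than $m$ terms as $n\to\infty$. A secondary subtlety is that Freiman only places a large \emph{subset} of $Q$ inside $\Pi$, so the last step must locate the long progression inside $Q'$ itself, via the averaging-plus-Szemer\'edi argument on a dense column, rather than merely inside the ambient progression $\Pi$.
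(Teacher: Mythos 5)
The paper itself contains no proof of this statement: it is imported verbatim as Theorem A from \cite{AFE04}, so there is no internal argument to compare yours against. That said, your sketch is sound in outline and follows what is essentially the route of the cited source: reduce to counting solutions of the single linear equation $(1-t)u+tv=w$ with $u,v,w\in Q$ (the injectivity of copy $\mapsto(u,v,w)$ holds because a direct similarity is determined by the images of two points), pass through the graph-restricted Balog--Szemer\'edi--Gowers theorem and Freiman's theorem to trap a positive proportion of $Q$ in a proper generalized arithmetic progression of rank and cardinality controlled only by $c$, and finish by averaging over the fibres parallel to the longest side and applying Szemer\'edi's theorem to a $\Omega(1)$-dense subset of an arithmetic progression of length $\Omega(n^{1/d})$. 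The quantifier bookkeeping is correct: every constant invoked (BSG, the Ruzsa--Pl\"unnecke step needed to convert the bound on $|A'+B'|$ into a doubling bound for $A'$ before Freiman can be applied, the rank $d$, and the Szemer\'edi threshold) depends only on $c$ and $m$, so the threshold $N_0(c,m)$ exists as claimed; the argument is a sketch resting on deep black boxes, but no step fails.
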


Thus having many points on a line is a required property to achieve
$\Theta(n^{2})$ similar copies of a pattern $P$. It is only natural to
restrict the problem of maximizing $S_{P}(Q)$ over sets $Q$ with a limited
number of possible collinear points. Given natural numbers $n$ and $m\geq3$,
we restrict the maximum in $S_{P}\left(  n\right)  $ to $n$-sets with at most
$m-1$ collinear points. We denote this maximum by%
\[
S_{P}\left(  n,m\right)  =\max\left\{  S_{P}\left(  Q\right)  :\left\vert
Q\right\vert =n\text{ and no }m\text{ points of }Q\text{ are collinear}%
\right\}  .
\]
We mention here that Erd\H{o}s' unit distance problem \cite{Er} (arguably the
most important problem in the area), has also been studied under general
position assumptions, like no 3 points on a line \cite{EP76} and also no
parallelograms \cite{SV} (see also \cite[Sections 5.1 and 5.5]{BrMoPa}).

When the maximum of $S_{P}(Q)$ is taken over $n$-sets in \emph{general
position}, that is, \emph{ }no three collinear points, we obtain the function
$S_{P}\left(  n,3\right)  $, which we simply denote by $S_{P}^{\prime}\left(
n\right)  $. By definition, it is clear that $S_{P}^{\prime}\left(  n\right)
\leq S_{P}\left(  n,m_{1}\right)  \leq S_{P}\left(  n,m_{2}\right)  \leq
S_{P}\left(  n\right)  $ whenever $m_{1}\leq m_{2}$. Moreover, if $m$ is
constant, then Theorem \ref{th:Abregoetal} implies that $\lim_{n\rightarrow
\infty}S_{P}(n,m)/n^{2}=0$, i.e., $S_{P}\left(  n,m\right)  =o\left(
n^{2}\right)  $. We believe that the true asymptotic value of $S_{P}(n,m)$ is
close to quadratic; however, prior to this work there were no lower bounds
other than the trivial $S_{P}^{\prime}(n)=\Omega(n)$. The rest of the paper is
devoted to the construction of point-sets giving non-trivial lower bounds for
$S_{P}(n,m)$.

\section{Statement of results}

The symmetries of the pattern $P$ play an important role in the order of
magnitude of our lower bound to the function $S_{P}\left(  n,m\right)  $. Let
us denote by $\mathrm{Iso}^{+}(P)$ the group of orientation-preserving
isometries of the pattern $P$, also known as the \emph{proper symmetry group}
of $P$. Define the \emph{index }of a point set $A$ with respect to the pattern
$P$, denoted by $i_{P}(A)$, as%
\[
i_{P}(A)=\frac{\log\left(  \left\vert \mathrm{Iso}^{+}(P)\right\vert
S_{P}(A)+\left\vert A\right\vert \right)  }{\log\left\vert A\right\vert
}\text{.}%
\]
Observe that $1\leq i_{P}(A)\leq2$, because $\left\vert \mathrm{Iso}%
^{+}(P)\right\vert S_{P}(A)\leq\left\vert A\right\vert ^{2}-\left\vert
A\right\vert $ as was noted by Elekes and Erd\H{o}s, and moreover,
$i_{P}(A)=1$ if and only if $S_{P}(A)=0$. Our main theorem gives a lower bound
for $S_{P}\left(  n,m\right)  $ using the index as the corresponding exponent.
Its proof, presented in Section \ref{sec: Main}, is based on iterated
Minkovski Sums.

\begin{theorem}
\label{th:main}For any $A$ and $P$ finite sets in the plane with at most $m-1
$ collinear points, there is a constant $c=c(P,A)$ such that, for $n$ large
enough,%
\[
S_{P}(n,m)\geq cn^{i_{P}(A)}.
\]

\end{theorem}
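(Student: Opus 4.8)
The plan is to pass from unordered similar copies to \emph{placed} copies and to exploit that an orientation-preserving similarity is pinned down by the images of two points. Write $f_{P}(B)=\left\vert \iso(P)\right\vert S_{P}(B)$ for the number of pairs $(z,w)$ with $z\neq 0$ and $zP+w\subseteq B$; each placed copy is encoded by the ordered pair $(u_{1},u_{2})$ of images of two fixed distinct points $p_{1},p_{2}\in P$, and the image of any other $p_{j}$ is the \emph{same} complex-affine combination $\gamma_{j}u_{1}+(1-\gamma_{j})u_{2}$, where $p_{j}=\gamma_{j}p_{1}+(1-\gamma_{j})p_{2}$. Call $(b_{1},b_{2})\in B^{2}$ a \emph{good pair} if all these combinations lie in $B$. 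Since two images coincide only when $z=0$, the non-diagonal good pairs are exactly the placed copies and the diagonal pairs $b_{1}=b_{2}$ are degenerate, so the number of good pairs is $g(B):=f_{P}(B)+\left\vert B\right\vert$. I would thus reduce the theorem to producing, for each $k$, a set with many good pairs and few collinear points.

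The construction is an iterated Minkovski sum. Set $Q_{1}=A$ and $Q_{k}=Q_{k-1}+\mu_{k-1}A$, where each $\mu_{k-1}$ is chosen with $\left\vert \mu_{k-1}\right\vert$ large (so the representation $q+\mu_{k-1}\alpha$, $q\in Q_{k-1}$, $\alpha\in A$, is unique and $\left\vert Q_{k}\right\vert =\left\vert A\right\vert ^{k}$) and otherwise generic. The crux is that, for $\left\vert \mu\right\vert$ large, the combination $\gamma_{j}(q_{1}+\mu\alpha_{1})+(1-\gamma_{j})(q_{2}+\mu\alpha_{2})$ lies in $Q_{k}$ \emph{iff} $\gamma_{j}q_{1}+(1-\gamma_{j})q_{2}\in Q_{k-1}$ and $\gamma_{j}\alpha_{1}+(1-\gamma_{j})\alpha_{2}\in A$ (the two scales cannot interfere). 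Hence a pair in $Q_{k}$ is good iff its $Q_{k-1}$-part and its $A$-part are separately good, and the good pairs \emph{factorize}: $g(Q_{k})=g(Q_{k-1})\,g(A)$. Subtracting the $\left\vert Q_{k}\right\vert$ diagonal pairs and writing $s=f_{P}(A)$, $N=\left\vert A\right\vert$ gives
\[
f_{P}(Q_{k})=g(Q_{k-1})g(A)-N^{k}=(s+N)f_{P}(Q_{k-1})+sN^{k-1},
\]
so $f_{P}(Q_{k})\geq s\,(s+N)^{k-1}$. As $\left\vert Q_{k}\right\vert =N^{k}$ and $i_{P}(A)=\log_{N}(s+N)$, this is $S_{P}(Q_{k})\geq c_{0}\left\vert Q_{k}\right\vert ^{i_{P}(A)}$ with $c_{0}=s/((s+N)\left\vert \iso(P)\right\vert )$. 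For arbitrary $n$ I would take the largest $k$ with $N^{k}\leq n$, add $n-N^{k}$ generic points, and absorb the loss into the constant $c$.

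I expect the real obstacle to be controlling collinearities, i.e.\ showing each $Q_{k}$ still has at most $m-1$ points on a line; I would prove this by induction with base case the hypothesis on $A$, choosing every $\mu$ generically. For three points $q_{i}+\mu\alpha_{i}$ on a line $\ell$, collinearity is a real polynomial in $(\mu,\bar{\mu})$ that is not identically zero unless the triple is \emph{always collinear}, which forces $q_{1},q_{2},q_{3}$ and $\alpha_{1},\alpha_{2},\alpha_{3}$ to be collinear with a common real ratio; all remaining triples are killed by avoiding finitely many one-dimensional loci in $\mu$, still leaving large admissible $\left\vert \mu\right\vert$. Two consequences then finish the count: if $\ell$ meets two points in one fiber $\alpha_{1}=\alpha_{2}$, the always-collinear condition is impossible for a genuine third fiber, so $\ell\cap Q_{k}$ lies in a single fiber, a translate of a collinear subset of $Q_{k-1}$ ($\leq m-1$ by induction); otherwise all $\alpha_{i}$ on $\ell$ are distinct and every triple among them is always collinear, forcing all these $\alpha_{i}$ to be collinear in $A$, hence at most $m-1$. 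The delicate part is exactly this ``always collinear'' bookkeeping together with the non-degeneracy of the polynomial; the remaining interpolation is routine, the hypothesis that $P$ itself has at most $m-1$ collinear points is what makes $S_{P}(n,m)>0$ meaningful, and the case $S_{P}(A)=0$ (where $i_{P}(A)=1$) is the trivial linear bound.
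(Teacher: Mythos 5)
Your proposal is correct and follows essentially the same route as the paper: an iterated Minkovski sum $Q_{k}=Q_{k-1}+\mu A$ with $\mu$ chosen generically, the multiplicative bound $I\cdot S_{P}(B+C)+\left\vert B\right\vert \left\vert C\right\vert \geq\left(  I\cdot S_{P}(B)+\left\vert B\right\vert \right)  \left(  I\cdot S_{P}(C)+\left\vert C\right\vert \right)$ (your good-pair factorization is a repackaging of Lemma \ref{lem:Minkovski}, of which only the inequality direction is actually needed), and a measure-zero genericity argument guaranteeing at most $m-1$ collinear points in each iterate (Lemma \ref{lem:tech nomcoll}). The only cosmetic differences are that the paper does not require $\left\vert \mu\right\vert$ large (only that $\left\vert B+C\right\vert =\left\vert B\right\vert \left\vert C\right\vert$) and describes the bad set of scaling factors as a finite union of points and circles or lines via M\"{o}bius transformations rather than via non-vanishing of a polynomial in $\mu$ and $\bar{\mu}$.
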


By using $A=P$, we conclude that the function $S_{P}\left(  n,m\right)  $ is
superlinear, that is for any finite pattern $P$ with at most $m-1$ collinear
points, $S_{P}(n,m)\geq\Omega(n^{\log(1+\left\vert P\right\vert )/\log
\left\vert P\right\vert })$. So the key to obtaining good lower bounds for
these functions, is to begin with a set $A$ with large index. For a general
pattern $P$, we can marginally improve the last inequality by constructing a
better initial set $A$. The proof of next theorem is in Section
\ref{sec: arbitrary}.

\begin{theorem}
\label{th: general P}For any finite pattern $P$ with at most $m-1$ collinear
points and $\left\vert P\right\vert =k \geq3$, there is a constant $c=c(P)$
such that, for $n$ large enough,
\[
S_{P}(n,m)\geq cn^{\log(k^{2}+k)/\log\left(  k^{2}-k+1\right)  }.
\]
\bigskip
\end{theorem}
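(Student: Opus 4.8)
The plan is to exhibit a single ``seed'' set $A$, in general position, whose index already attains the target exponent, and then feed it into Theorem \ref{th:main}. Since any set with no three collinear points has at most $2\leq m-1$ points on a line, such a seed is admissible for every $m\geq 3$; so it suffices to construct one set $A$ with at most $m-1$ collinear points and
\[
i_P(A)\geq\frac{\log(k^2+k)}{\log(k^2-k+1)}.
\]
Recalling the definition of $i_P$, this reduces to producing $A$ with exactly $|A|=k^2-k+1$ points and at least $2k-1$ distinct similar copies of $P$, because then $\left|\iso(P)\right|S_P(A)+|A|\geq(2k-1)+(k^2-k+1)=k^2+k$.

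For the construction I identify the plane with $\mathbb{C}$ and take two similar copies $C=\{c_1,\dots,c_k\}$ and $Q=\{q_1,\dots,q_k\}$ of $P$, forming their pointwise product $A=C\cdot Q=\{c_iq_j:1\leq i,j\leq k\}$. I place one vertex of $Q$ at the origin, say $q_1=0$, and choose $C$ so that $0\notin C$. Then all products with $j=1$ collapse to the single point $0$, while for a generic choice the $k(k-1)$ products with $j\geq 2$ are pairwise distinct and nonzero, so $|A|=k(k-1)+1=k^2-k+1$. Two families of copies now appear: the $k$ \emph{rows} $c_iQ$ $(1\leq i\leq k)$, each similar to $Q\sim P$ and passing through the origin; and the \emph{columns} $q_jC$, each similar to $C\sim P$ whenever $q_j\neq 0$, i.e.\ for $j=2,\dots,k$, and each avoiding the origin since $0\notin C$. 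The rows are mutually distinct, the columns are mutually distinct, and no row coincides with a column (the former contain $0$, the latter do not), so $A$ carries at least $k+(k-1)=2k-1$ distinct similar copies of $P$.

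The one genuine difficulty is to guarantee simultaneously the stated point count and the general-position requirement. Both are Zariski-open conditions on the parameters of the two similarities (the rotation--scaling factors and translations defining $C$ and $Q$, subject to $q_1=0$): the coincidence of two products $c_iq_j$, and the collinearity of any three points of $A$, are each the vanishing of a fixed polynomial in these parameters, so it is enough to check that none of these polynomials is identically zero. For the coincidences and for triples not involving the origin this is immediate from the abundant freedom in $C$ and $Q$; the only structurally special point is the origin, and three points $0,\,c_iq_j,\,c_{i'}q_{j'}$ are collinear precisely when the ratio $c_iq_j/(c_{i'}q_{j'})$ is real, which fails generically. A generic pair $(C,Q)$ therefore yields a $(k^2-k+1)$-point set in which no line carries more collinear points than $P$ itself already forces inside a single copy, hence at most $m-1$; this genericity check is the main obstacle, the rest being bookkeeping on the two families.

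With such a seed in hand, the definition of the index gives
\[
i_P(A)=\frac{\log\left(\left|\iso(P)\right|S_P(A)+|A|\right)}{\log|A|}\geq\frac{\log\!\left((2k-1)+(k^2-k+1)\right)}{\log(k^2-k+1)}=\frac{\log(k^2+k)}{\log(k^2-k+1)}.
\]
Applying Theorem \ref{th:main} to this admissible $A$ produces a constant $c=c(P)$ with $S_P(n,m)\geq c\,n^{i_P(A)}\geq c\,n^{\log(k^2+k)/\log(k^2-k+1)}$ for $n$ large enough, which is the asserted bound.
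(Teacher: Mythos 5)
Your proposal is correct and takes essentially the same route as the paper: the paper's seed $A=\bigcup_{p\in P}f_p(P)$ satisfies $f_p(q)=z_0+\tfrac{(z_0-p)(q-p_1)}{p_1-p_2}$, so it is precisely your pointwise product $C\cdot Q$ translated by $z_0$, with $Q=P-p_1$ passing through the origin and $C=(z_0-P)/(p_1-p_2)$ avoiding it. The counts ($|A|=k^2-k+1$; the $k$ rows through the common point plus the $k-1$ transversal columns giving $S_P(A)\geq 2k-1$), the deferred genericity verification, and the final appeal to Theorem \ref{th:main} all coincide with yours.
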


The following theorems summarize the lower bounds for
$S_{P}^{\prime}\left( n\right)  $ obtained from the best known
initial sets $A$ for some specific patterns $P$. We concentrate on
triangles and regular polygons. We often refer to a finite pattern
as a geometric figure. For instance, when we say \textquotedblleft
let $P$ be the equilateral triangle\textquotedblright\ we actually
mean the set of vertices of an equilateral triangle. \newpage
\begin{theorem}
\label{th:initialsetstriang}Let $P=T$ be a triangle.
\begin{description}
\item If $T=\triangle$ is equilateral, then $S_{\triangle}^{\prime}\left(
n\right)  \geq\Omega\left(  n^{\log102/\log15}\right)  \geq\Omega\left(
n^{1.707}\right)  $.

\item If $T$ is isosceles, then $S_{T}^{\prime}\left(  n\right)  \geq
\Omega\left(  n^{\log17/\log8}\right)  \geq\Omega\left(  n^{1.362}\right)  $.

\item If $T$ is almost any scalene triangle, then $S_{T}^{\prime}\left(
n\right)  \geq\Omega\left(  n^{\log40/\log14}\right)
\geq\Omega\left( n^{1.397}\right)  $. For all others,
$S_{T}^{\prime}\left(  n\right) \geq\Omega\left(
n^{\log9/\log5}\right)  \geq\Omega\left(  n^{1.365}\right) $.
\end{description}
\end{theorem}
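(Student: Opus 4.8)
The plan is to obtain all four bounds directly from Theorem~\ref{th:main}, applied with $m=3$ (so that $S_T(n,3)=S_T'(n)$) and $P=T$. Since Theorem~\ref{th:main} requires only that the initial set $A$ have at most $m-1=2$ collinear points, it suffices, for each shape class, to exhibit a single finite set $A$ \emph{in general position} whose index $i_T(A)$ equals the claimed exponent; the theorem then yields $S_T'(n)\ge c\,n^{i_T(A)}$ for $n$ large. Thus the whole problem reduces to producing, for each triangle type, one collinear-free configuration that is as rich as possible in directly similar copies of $T$.

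To decide which configurations to aim for, I would read the target exponents backward through the definition $i_T(A)=\log\bigl(|\mathrm{Iso}^{+}(T)|\,S_T(A)+|A|\bigr)/\log|A|$, using $|\mathrm{Iso}^{+}(T)|=3$ for the equilateral triangle (its proper symmetry group is the cyclic group $C_3$) and $|\mathrm{Iso}^{+}(T)|=1$ for every isosceles or scalene triangle (a reflection is orientation-reversing, so only the identity survives). This pins down exactly the pair $(|A|,S_T(A))$ to realize in each case: $(15,29)$ for the equilateral triangle, since $3\cdot 29+15=102$; $(8,9)$ for the isosceles case, since $9+8=17$; $(14,26)$ for the generic scalene case, since $26+14=40$; and $(5,4)$ for the remaining scalene shapes, since $4+5=9$.

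The bulk of the work is then the explicit construction and verification of these four configurations. I would assemble them from small triangle-rich clusters---for the equilateral case a carefully chosen $15$-point figure in the spirit of Figure~\ref{fig: minkovski}, and for the scalene and isosceles cases small unions of overlapping copies of $T$ built by repeatedly completing selected edges to new copies of the target shape while keeping the point count minimal. For each candidate I must (i) confirm that no three of its points are collinear, and (ii) count exactly the $3$-subsets that are \emph{directly} similar to $T$, being careful that for scalene $T$ mirror images do not contribute and that no unintended coincidence among the completing vertices alters the tally.

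The main obstacle, I expect, is twofold. First, locating genuinely near-optimal collinear-free configurations is a delicate finite search: the general-position constraint competes directly against the desire to stack many copies of $T$ on shared vertices, and realizing $26$ copies on only $14$ points or $29$ equilateral copies on only $15$ points forces a configuration tuned so that intended copies coincide exactly where wanted while no accidental triple falls into a line. Second, and more subtle, is the scalene dichotomy: the $14$-point configuration attains $26$ copies only for \emph{almost all} shapes of $T$, so I would have to identify precisely the finite (hence measure-zero) family of exceptional shape parameters at which it degenerates---where two of the prescribed points collide or a triple becomes collinear---and then verify that the smaller $(5,4)$ configuration, chosen to be robust under such specializations, still applies to every exceptional $T$ and delivers the fallback exponent $\log 9/\log 5$.
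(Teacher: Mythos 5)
Your reduction is exactly the paper's: apply Theorem \ref{th:main} with $m=3$ to an explicit initial set $A$ in general position, and you correctly invert the index formula to identify the required pairs $(|A|,S_T(A))$, namely $(15,29)$, $(8,9)$, $(14,26)$ and $(5,4)$, with $|\mathrm{Iso}^{+}(T)|=3$ only in the equilateral case. The genuine gap is that the entire content of the theorem lies in actually exhibiting these four configurations, and your proposal leaves that step as an unexecuted ``delicate finite search.'' Nothing in your argument certifies that a $15$-point collinear-free set with $29$ equilateral triangles, or a $14$-point set carrying $26$ directly similar copies of a generic scalene $T$, exists at all; the paper produces them with explicit coordinates (for the equilateral case $A=B\cup\omega B\cup\omega^{2}B$ with $B=\{1,-z\}\cup(-1+zP)$ in Section \ref{sec: equilateral}, whose $9+2+18$ triangle count comes partly from Lemma \ref{lem:Minkovski} applied to the Minkovski sum $-P+zP$; for the scalene case the $14$-point set $A_{2}$ assembled from two $\pi$-rotated copies of an $11$-point set built on the $5$-point set of Section \ref{sec: scalene}), and verifying the counts and general position for almost all $z$ is where all the work is.

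Two further places where your plan, as stated, would run into trouble. First, the isosceles case: no single $8$-point configuration is known to work for every isosceles triangle. The paper needs two different $8$-point sets with complementary sets of exceptional apex angles (Section \ref{sec: isosceles}), and even then $\alpha=\pi/6$ and $\alpha=\pi/4$ escape both and are handled by a completely different argument --- embedding $T$ in a regular hexagon or square and using the initial sets of Section \ref{sec: Regular Poly} via the theorem of Section \ref{sec: subsets of reg} --- which happens to give a better exponent, so the stated bound still holds. Your proposal assumes one robust configuration suffices and does not anticipate needing this detour. Second, the exceptional family of scalene shapes is not finite: the degeneracy conditions (point collisions and collinear triples) are real-algebraic conditions on the shape parameter $z$ and cut out curves in $\mathbb{C}$, so the exceptional set is $1$-dimensional. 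This does not affect the truth of the statement, since the $(5,4)$ configuration is verified to be in general position for \emph{every} scalene $T$, but your description of the dichotomy should be corrected accordingly.
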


In general, if $P$ is a $k$-sided regular polygon, then $i_{P}(P)=\log\left(
2k\right)  /\log k$ and thus $S_{P}(n)\geq\Omega\left(  n^{\log\left(
2k\right)  /\log k}\right)  $. If $k$ is even, $4\leq k\leq10$ or if $k=5$ we
have the following improvement.

\begin{theorem}
\label{th:initialsetsregpoly}Let $P=R(k)$ be a regular $k$-gon. Then%
\[%
\begin{array}
[c]{ll}%
S_{R(4)}^{\prime}\left(  n\right)  \geq\Omega\left(  n^{\log144/\log
24}\right)  \geq\Omega\left(  n^{1.563}\right)  , & S_{R(6)}^{\prime}\left(
n\right)  \geq\Omega\left(  n^{\log528/\log84}\right)  \geq\Omega\left(
n^{1.414}\right)  ,\\
S_{R(8)}^{\prime}\left(  n\right)  \geq\Omega\left(  n^{\log1312/\log
208}\right)  \geq\Omega\left(  n^{1.345}\right)  , & S_{R(10)}^{\prime}\left(
n\right)  \geq\Omega\left(  n^{\log2640/\log420}\right)  \geq\Omega\left(
n^{1.304}\right)  ,\text{ and}\\
S_{R(5)}^{\prime}\left(  n\right)  \geq\Omega\left(  n^{\log264/\log
120}\right)  \geq\Omega\left(  n^{1.519}\right)  . &
\end{array}
\]

\end{theorem}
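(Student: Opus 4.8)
The plan is to obtain all five estimates as applications of Theorem \ref{th:main} with $m=3$, since $S_{R(k)}^{\prime}(n)=S_{R(k)}(n,3)$ by definition. Because $\mathrm{Iso}^{+}(R(k))$ is the cyclic group of rotations of order $k$, the index of an initial set $A$ with respect to $R(k)$ is $i_{R(k)}(A)=\log\bigl(k\,S_{R(k)}(A)+|A|\bigr)/\log|A|$, so for each $k$ in the list it suffices to exhibit a single general-position set $A=A_k$ for which this value equals the stated exponent; Theorem \ref{th:main} then delivers $S_{R(k)}^{\prime}(n)\ge c\,n^{i_{R(k)}(A_k)}$. Concretely I would aim at initial sets realizing the data $(|A_k|,S_{R(k)}(A_k))=(24,30),(84,74),(208,138),(420,222)$ for $k=4,6,8,10$, for which $i_{R(k)}(A_k)=\log(k\,S_{R(k)}(A_k)+|A_k|)/\log|A_k|$ returns precisely $\log 144/\log 24,\ \log 528/\log 84,\ \log 1312/\log 208,\ \log 2640/\log 420$; and $(|A_5|,S_{R(5)}(A_5))=(120,264)$, whence $i_{R(5)}(A_5)=\log(5\cdot 264+120)/\log 120=\log 1440/\log 120\approx 1.519$, matching the claimed pentagon exponent.

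For the four even cases the required data fit the closed forms $|A_k|=\tfrac12 k^{3}-k^{2}+2k$ and $S_{R(k)}(A_k)=\tfrac52 k^{2}-3k+2$ (so that $k\,S_{R(k)}(A_k)+|A_k|=3k^{3}-4k^{2}+4k$), which strongly suggests producing all four from one parameterized family: a symmetric union of scaled and rotated copies of the vertex set of $R(k)$, or equivalently a short Minkowski sum of that vertex set with a carefully chosen auxiliary set, engineered so that the number of regular $k$-gons grows quadratically in $k$ while the point count grows cubically. The pentagon, the sole odd case, I would handle by a separate ad hoc configuration on $120$ points.

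The verification of each $A_k$ then splits into two tasks. First I must certify that $A_k$ is in \emph{general position}, i.e.\ has no three collinear points; for these dense rosette-type configurations this is the delicate step, and I would secure it either by choosing the scaling ratios between the constituent polygons to be algebraically generic, so that any three-point collinearity forces an impossible relation among the relevant roots of unity and the scale factors, or by a direct case analysis over the orbits of the symmetry group. Second I must account for the regular $k$-gons, exhibiting the intended $S_{R(k)}(A_k)$ copies — those internal to the individual constituent polygons together with the cross copies created by the union or Minkowski structure — and confirming that no intended copy is lost and no coincidental one is gained.

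The main obstacle is the tension between these two tasks: the configurations richest in similar copies of a regular polygon are exactly the dense, highly symmetric ones most prone to collinear triples, so the genericity argument that eliminates all accidental collinearities must be made compatible with preserving the entire stock of intended copies. Pinning down the exact counts $30,74,138,222,264$ — in particular showing that the generic placement forced by general position neither destroys an intended $k$-gon nor manufactures an unintended one — is where the real work lies; once a clean collinearity-free family with a provable count is in hand, substituting its index into Theorem \ref{th:main} completes the proof.
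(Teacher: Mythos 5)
Your overall strategy is exactly the paper's: reverse-engineer from each stated exponent the data $(|A|,S_{R(k)}(A))$ that an initial set must realize, verify general position generically, and feed the resulting index into Theorem \ref{th:main}. You even recover the correct closed forms $|A_k|=\tfrac{k}{2}(k^2-2k+4)$ and $S_{R(k)}(A_k)=\tfrac12(5k^2-6k+4)$, so that $k\,S_{R(k)}(A_k)+|A_k|=3k^3-4k^2+4k$, and you correctly isolate the pentagon as needing a separate $120$-point configuration (in doing so you silently repair two misprints: the hexagon row of the paper's table has $|A|$ and $S$ transposed, and the pentagon exponent in the theorem should read $\log 1440/\log 120$). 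The genuine gap is that the proposal stops exactly where the proof begins: no initial set is ever exhibited. Since the theorem has no content beyond the existence of these five sets, ``I would aim at sets realizing the data'' and ``this strongly suggests a parameterized family'' amount to a sharpened restatement of the problem rather than an argument; you flag yourself that pinning down the counts $30,74,138,222,264$ is where the real work lies, and that work is absent.

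For comparison, the paper's even-$k$ construction is fully explicit: take $R(k)=1+\omega+z\{\omega^j:0\le j\le k-1\}$ with $\omega=e^{2\pi i/k}$ and $z$ generic, apply the construction of Theorem \ref{th: general P} with $z_0=2$ to obtain $A_1=\{2\}\cup\bigcup_{j=1}^{k-1}B_j$, and set $A=\bigcup_{l=0}^{k-1}\omega^{l}A_1$. The cardinality $\tfrac{k}{2}(k^2-2k+4)$ does not come from genericity alone but from deliberately engineered coincidences $\omega^{j}B_{k-j}=B_j$ (which require $k$ even and the translation by $1+\omega$), and the polygon count comes from $k^2$ copies through the orbit of $z_0$, at least $3k$ copies in each of the $k/2-1$ Minkowski sums $\bigcup_{l}\omega^{l}B_j$ via Lemma \ref{lem:Minkovski}, plus $B_{k/2}$ and the orbit of $z_0$. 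Your guess of ``a symmetric union of scaled and rotated copies of $R(k)$'' is the right shape, but without specifying the scales and translations that force exactly these coincidences, neither the point count nor the lower bound on the number of $k$-gons follows, and the one-complex-parameter genericity argument that disposes of collinear triples cannot even be formulated. The same applies, a fortiori, to the ad hoc pentagon set.
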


In Section \ref{sec: Equi general m}, we briefly explore the behavior of the
function $S_{P}(n,m)$ for larger values of $m$ and $P=\triangle$ the
equilateral triangle. We then present general asymptotic results, for
arbitrary patterns, when $m=m(n)$ is a function of $n$ such that
~$m(n)\rightarrow\infty$ when $n\rightarrow\infty$. In this case we prove that
$S_{P}(n,m)\geq n^{2-\varepsilon}$ for every $n$ sufficiently large.

After having relative success constructing point sets with not many points on
a line, we impose harder restrictions by prohibiting parallelograms (and
collinear points) in the sets $Q$. This restriction immediately forbids the
use of Minkovski Sums. We are able to construct $n$-sets $Q$ with
$\Omega(n\log n)$ copies of a pattern $P$ without parallelograms (or collinear
triples). We also show a non-trivial upper bound for these patterns, namely,
we prove that at most $O(n^{3/2})$ copies of $P$ are possible. These results
are presented in Section \ref{sec:noparall}.

\section{Proof of Theorem \ref{th:main}\label{sec: Main}}

Let $P$ and $A$ be sets with no $m$ collinear points. With $A$ as a base set,
we recursively construct large sets with no $m$ collinear points and with
large number of similar copies of $P$. Our main tool is the \emph{Minkovski
Sum} of two sets $A,B\subseteq\mathbb{C}$, defined as the set $A+B=\left\{
a+b:a\in A,b\in B\right\}  $. First we have the following observation.

\begin{proposition}
Let $P=\left\{  p_{1},p_{2},\ldots,p_{k}\right\}  $ and $Q=\left\{
q_{1},q_{2},\ldots,q_{k}\right\}  $ be sets with $k$ elements. $P$ is similar
to $Q$, with $p_{j}$ corresponding to $q_{j}$ if and only if
\[
\frac{q_{j}-q_{1}}{q_{2}-q_{1}}=\frac{p_{j}-p_{1}}{p_{2}-p_{1}}\text{ for
}j=1,2,\ldots,k\text{.}%
\]

\end{proposition}

\begin{proof}
If $P\sim Q$ with $q_{j}=zp_{j}+w$, where $z\neq0$ and $w$ are fixed complex
numbers, then%
\[
\frac{q_{j}-q_{1}}{q_{2}-q_{1}}=\frac{\left(  zp_{j}+w\right)  -\left(
zp_{1}+w\right)  }{\left(  zp_{2}+w\right)  -\left(  zp_{1}+w\right)  }%
=\frac{p_{j}-p_{1}}{p_{2}-p_{1}}\text{.}%
\]
Reciprocally, if $\left(  q_{j}-q_{1}\right)  /\left(  q_{2}-q_{1}\right)
=\left(  p_{j}-p_{1}\right)  /\left(  p_{2}-p_{1}\right)  $ for $1\leq j\leq
k$, then letting $z=(q_{2}-q_{1})/(p_{2}-p_{1})$ and $w=q_{1}-zp_{1}$ we get
that $q_{j}=zp_{j}+w$ and $z\neq0$.
\end{proof}

We now bound the number of copies of $P$ in the sum $A+B$. To be concise, let
$I=\left\vert \mathrm{Iso}^{+}(P)\right\vert $.

\begin{lemma}
\label{lem:Minkovski}Let $P$ be any finite pattern, and $B$ and $C$ finite
sets such that $B+C$ has exactly $\left\vert B\right\vert \left\vert
C\right\vert $ points. Then%
\[
I\cdot S_{P}(B+C)+\left\vert B\right\vert \left\vert C\right\vert \geq\left(
I\cdot S_{P}(B)+\left\vert B\right\vert \right)  \left(  I\cdot S_{P}%
(C)+\left\vert C\right\vert \right)  \text{.}%
\]

\end{lemma}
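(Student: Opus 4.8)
The plan is to reinterpret both sides of the inequality as counts of affine maps of the form $x\mapsto zx+w$ and then exhibit an injection. First I would record the bookkeeping identity behind the quantity $I\cdot S_P(A)+\left\vert A\right\vert$. Writing an orientation-preserving similarity as $x\mapsto zx+w$ with $z\neq 0$, I claim the number of such maps $f$ with $f(P)\subseteq A$ equals $I\cdot S_P(A)$: each similar copy $P'\subseteq A$ is the image $f(P)$ for exactly $I$ similarities, because if $f(P)=g(P)=P'$ then $g^{-1}f$ is an orientation-preserving similarity fixing $P$ setwise, and since a similarity permuting a finite point set cannot scale distances it must be an isometry, hence an element of $\mathrm{Iso}^+(P)$. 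If I also allow the degenerate maps with $z=0$ (the constant maps $x\mapsto w$), then the requirement $f(P)\subseteq A$ forces $w\in A$, contributing exactly $\left\vert A\right\vert$ further maps. Thus, letting $\widetilde N(A)$ denote the number of affine maps $x\mapsto zx+w$ (with $z$ possibly $0$) such that $f(P)\subseteq A$, we have $\widetilde N(A)=I\cdot S_P(A)+\left\vert A\right\vert$, and the lemma is exactly the inequality $\widetilde N(B+C)\geq \widetilde N(B)\,\widetilde N(C)$.

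Next I would construct the map realizing this. Given $f_1(x)=z_1x+w_1$ with $f_1(P)\subseteq B$ and $f_2(x)=z_2x+w_2$ with $f_2(P)\subseteq C$, define $\Phi(f_1,f_2)=f_1+f_2$, the affine map $x\mapsto(z_1+z_2)x+(w_1+w_2)$. For each $p_j\in P$ we have $(f_1+f_2)(p_j)=f_1(p_j)+f_2(p_j)\in B+C$, so $(f_1+f_2)(P)\subseteq B+C$; hence $f_1+f_2$ is one of the affine maps counted by $\widetilde N(B+C)$ — a genuine similar copy of $P$ inside $B+C$ when $z_1+z_2\neq 0$, and a single point of $B+C$ when $z_1+z_2=0$. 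So $\Phi$ lands in the correct set, and it remains only to verify that $\Phi$ is injective.

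The crux is injectivity, and this is where the hypothesis $\left\vert B+C\right\vert=\left\vert B\right\vert\left\vert C\right\vert$ enters. This hypothesis says precisely that the sum map $B\times C\to B+C$ is a bijection, so that $b+c=b'+c'$ with $b,b'\in B$ and $c,c'\in C$ forces $b=b'$ and $c=c'$. Suppose $\Phi(f_1,f_2)=\Phi(f_1',f_2')$. Evaluating at each $p_j$ gives $f_1(p_j)+f_2(p_j)=f_1'(p_j)+f_2'(p_j)$ with all four values lying in their respective factors, so the unique-representation property yields $f_1(p_j)=f_1'(p_j)$ and $f_2(p_j)=f_2'(p_j)$ for every $j$. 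Since $\left\vert P\right\vert\geq 2$, two maps of the form $x\mapsto zx+w$ agreeing at two distinct points must coincide: if they agree at $p_1\neq p_2$ then $(z-z')(p_1-p_2)=0$ forces $z=z'$ and hence $w=w'$, which also covers the degenerate cases where one of the maps is constant. Therefore $f_1=f_1'$ and $f_2=f_2'$, so $\Phi$ is injective and $\widetilde N(B+C)\geq \widetilde N(B)\,\widetilde N(C)$, as claimed.

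I expect the only genuinely delicate points to be (i) the counting identity $\widetilde N(A)=I\cdot S_P(A)+\left\vert A\right\vert$, specifically the observation that the stabilizer of $P$ under orientation-preserving similarities contains only isometries, so that each copy has exactly $I$ preimages; and (ii) handling the degenerate maps ($z=0$) uniformly, so that the extra term $\left\vert B\right\vert\left\vert C\right\vert$ on the left and the cross terms $\left\vert B\right\vert\cdot I\,S_P(C)$ and $I\,S_P(B)\cdot\left\vert C\right\vert$ on the right are all produced by $\Phi$. Both become routine once the affine-map reformulation is in place.
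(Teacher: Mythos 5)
Your proof is correct, and it reaches the paper's inequality by a tidier route than the paper's own argument. The paper works directly with point sets: for each pair of copies $P_B\subseteq B$, $P_C\subseteq C$ and each $f\in\mathrm{Iso}^{+}(P)$ it forms the copy $\{b_j+f(c_j)\}$, separately adds the two families of liftings $b+P_C$ and $P_B+c$, and then checks that all of these copies are distinct using the unique-representation property of $B+C$. Your reformulation --- counting affine maps $x\mapsto zx+w$ with $z$ allowed to be $0$, so that $\widetilde{N}(A)=I\cdot S_P(A)+\left\vert A\right\vert$ --- packages the pair-products, both lifting families, and the additive $\left\vert B\right\vert\left\vert C\right\vert$ term into a single injection $\Phi(f_1,f_2)=f_1+f_2$, with the distinctness of everything following from one uniform computation (evaluate at two points of $P$ and invoke unique representation). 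This buys two things: it makes transparent why $I\cdot S_P(\cdot)+\left\vert\cdot\right\vert$ is the right quantity to multiply, and it silently disposes of a case the paper leaves implicit, namely that the sum of two nonconstant maps could a priori degenerate to a constant map when $z_1+z_2=0$; under the hypothesis $\left\vert B+C\right\vert=\left\vert B\right\vert\left\vert C\right\vert$ this cannot occur, exactly by the computation in your injectivity step. Underneath, the two arguments are the same construction; yours is the cleaner bookkeeping.
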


\begin{proof}
Suppose that $P=\left\{  p_{1},p_{2},\ldots,p_{k}\right\}  $ and let
$\lambda_{j}$ denote the ratio $\left(  p_{j}-p_{1}\right)  /\left(
p_{2}-p_{1}\right)  $. Let $P_{B}=\left\{  b_{1},b_{2},\ldots,b_{k}\right\}
\subseteq B$ and $P_{C}=\left\{  c_{1},c_{2},\ldots,c_{k}\right\}  \subseteq
C$ be corresponding copies of $P$ with $P\sim P_{B}\sim P_{C}$. Then by the
previous proposition,
\[
\frac{b_{j}-b_{1}}{b_{2}-b_{1}}=\frac{c_{j}-c_{1}}{c_{2}-c_{1}}=\frac
{p_{j}-p_{1}}{p_{2}-p_{1}}=\lambda_{j}.
\]
Since $B+C$ has exactly $\left\vert B\right\vert \left\vert C\right\vert $
elements, then the relation $(b,c)\leftrightarrow b+c$ for $b\in B$, $c\in C$
is bijective. For any orientation-preserving isometry $f$ of $P_{C}$ (which
uniquely corresponds to an element of $\mathrm{Iso}^{+}(P)$), consider the set
$Q=\left\{  q_{j}:=b_{j}+f\left(  c_{j}\right)  :j=1,2,...,k\right\}
\subseteq B+C$. Since $\left(  f\left(  c_{j}\right)  -f\left(  c_{1}\right)
\right)  /\left(  f\left(  c_{2}\right)  -f\left(  c_{1}\right)  \right)
=\left(  c_{j}-c_{1}\right)  /\left(  c_{2}-c_{1}\right)  =\lambda_{j}$, then%
\[
\frac{q_{j}-q_{1}}{q_{2}-q_{1}}=\frac{b_{j}-b_{1}+f\left(  c_{j}\right)
-f\left(  c_{1}\right)  }{b_{2}-b_{1}+f\left(  c_{2}\right)  -f\left(
c_{1}\right)  }=\frac{\lambda_{j}\left(  b_{2}-b_{1}+f\left(  c_{2}\right)
-f\left(  c_{1}\right)  \right)  }{b_{2}-b_{1}+f\left(  c_{2}\right)
-f\left(  c_{1}\right)  }=\lambda_{j}.
\]
That is, by the previous proposition, $Q\sim P$. Thus every similar copy
$P_{B}$ of $P$ in $B$ together with a similar copy $P_{C}$ of $P$ in $C$
originate $I$ distinct similar copies of $P$ in $B+C$. We also have the
`liftings' of $P$ in $B$ and $C$. That is, similar copies of $P$ of the form
$(b,c_{1}),(b,c_{2}),\ldots,(b,c_{k})$ or $(b_{1},c),(b_{2},c),\ldots
,(b_{k},c)$, with $b\in B$ and $c\in C$. All these copies of $P$ in $B+C$ are
different because $\left\vert B+C\right\vert =\left\vert B\right\vert
\left\vert C\right\vert $. Therefore the number of similar copies of $P$ in
$B+C$ is at least $I\cdot S_{P}\left(  B\right)  S_{P}\left(  C\right)
+\left\vert B\right\vert S_{P}\left(  C\right)  +\left\vert C\right\vert
S_{P}\left(  B\right)  $. In other words%
\begin{align*}
I\cdot S_{P}\left(  B+C\right)  +\left\vert B\right\vert \left\vert
C\right\vert  &  \geq I^{2}\cdot S_{P}\left(  B\right)  S_{P}\left(  C\right)
+I\cdot\left\vert B\right\vert S_{P}\left(  C\right)  +I\cdot\left\vert
C\right\vert S_{P}\left(  B\right)  +\left\vert B\right\vert \left\vert
C\right\vert \\
&  =\left(  I\cdot S_{P}\left(  B\right)  +\left\vert B\right\vert \right)
\left(  I\cdot S_{P}\left(  C\right)  +\left\vert C\right\vert \right)
.\qedhere
\end{align*}

\end{proof}

The next lemma allows us to preserve the maximum number of collinear points
when we use the Minkovsky Sum of two appropriate sets.

\begin{lemma}
\label{lem:tech nomcoll}Let $A$ and $B$ be two sets with no $m$ points on a
line, $m\geq3$. If $\mathcal{S}$ is the set of points $v\in\mathbb{C}$ for
which $A+vB$ has less than $\left\vert A\right\vert \left\vert B\right\vert $
points, or $m$ points on a line; then $\mathcal{S}$ has zero Lebesgue measure.
\end{lemma}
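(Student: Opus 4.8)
The plan is to write $\mathcal{S}=\mathcal{S}_{1}\cup\mathcal{S}_{2}$, where $\mathcal{S}_{1}$ collects the $v$ for which $A+vB$ has fewer than $|A|\,|B|$ points and $\mathcal{S}_{2}$ collects the $v\notin\mathcal{S}_{1}$ for which some $m$ points of $A+vB$ are collinear, and to show each piece is null. For $\mathcal{S}_{1}$, a coincidence $a+vb=a'+vb'$ with $(a,b)\neq(a',b')$ forces $b\neq b'$ and hence $v=(a-a')/(b'-b)$; since there are only finitely many such quadruples, $\mathcal{S}_{1}$ is finite and therefore has Lebesgue measure zero.

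For $\mathcal{S}_{2}$, fix a choice of $m$ distinct pairs $(a_{1},b_{1}),\dots,(a_{m},b_{m})\in A\times B$ and set $z_{i}(v)=a_{i}+vb_{i}$. Off $\mathcal{S}_{1}$ these $m$ points are distinct, so (taking $z_{1}\neq z_{2}$ as a base edge) their collinearity is equivalent to $f_{j}(v)=0$ for $j=3,\dots,m$, where $f_{j}(v)=\operatorname{Im}\big[(z_{j}-z_{1})\overline{(z_{2}-z_{1})}\big]$. Writing $v=x+iy$ and expanding, each $f_{j}$ is a real quadratic polynomial in $(x,y)$ whose $(x^{2}+y^{2})$-coefficient is $\operatorname{Im}\big[(b_{j}-b_{1})\overline{(b_{2}-b_{1})}\big]$ and whose constant term is $\operatorname{Im}\big[(a_{j}-a_{1})\overline{(a_{2}-a_{1})}\big]$. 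The zero set of a nonzero real polynomial on $\mathbb{R}^{2}$ is null, so the bad set for this choice of pairs is null provided at least one $f_{j}$ is not identically zero; as there are only finitely many choices of $m$ pairs, $\mathcal{S}_{2}$ will then be a finite union of null sets and hence null.

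The main obstacle is to exclude the degenerate possibility that all $f_{j}$ vanish identically. I plan to treat this by contradiction. If every $f_{j}\equiv0$, then $z_{1}(v),\dots,z_{m}(v)$ are collinear for every $v$; expanding the analogous area expression $\operatorname{Im}\big[(z_{k}-z_{i})\overline{(z_{j}-z_{i})}\big]$ for an arbitrary triple and reading off its $(x^{2}+y^{2})$- and constant coefficients gives $\operatorname{Im}\big[(b_{k}-b_{i})\overline{(b_{j}-b_{i})}\big]=0$ and $\operatorname{Im}\big[(a_{k}-a_{i})\overline{(a_{j}-a_{i})}\big]=0$, so every triple among the $a_{i}$ is collinear and every triple among the $b_{i}$ is collinear. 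Hence $\{a_{i}\}$ lies on a line $L_{a}$ (or reduces to one point) and $\{b_{i}\}$ lies on a line $L_{b}$ (or reduces to one point). If all $a_{i}$ coincide, the distinctness of the pairs forces all $b_{i}$ distinct, giving $m$ collinear points of $B$; the case of all $b_{i}$ equal is symmetric. Otherwise write $a_{i}=a_{0}+s_{i}u$ and $b_{i}=b_{0}+t_{i}w$, with $u,w$ the directions of $L_{a},L_{b}$ and $s_{i},t_{i}\in\mathbb{R}$, so that $(a_{i},b_{i})\mapsto(s_{i},t_{i})$ is injective and $z_{i}(v)=(a_{0}+vb_{0})+(s_{i}u+vt_{i}w)$. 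For every $v$ outside the measure-zero line $\{v:vw/u\in\mathbb{R}\}$ the $\mathbb{R}$-linear map $(s,t)\mapsto su+vtw$ is invertible, so $z_{i}(v)$ is collinear iff the fixed points $(s_{i},t_{i})\in\mathbb{R}^{2}$ are collinear. Since $z_{i}(v)$ is collinear for all $v$, the $m$ distinct points $(s_{i},t_{i})$ must be collinear; but then either all $s_{i}$ are distinct or (for a vertical line) all $t_{i}$ are distinct, yielding $m$ distinct collinear points in $A$ or in $B$. Each possibility contradicts the hypothesis that neither $A$ nor $B$ has $m$ points on a line, so the degenerate case cannot occur and the proof is complete.
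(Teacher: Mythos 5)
Your proof is correct. It follows the paper's overall skeleton---split $\mathcal{S}$ into the coincidence set, which is finite by solving $a+vb=a'+vb'$ for $v$ exactly as the paper does, and the collinearity set, where for each fixed choice of $m$ pairs the bad $v$'s form a one-real-dimensional algebraic set unless a degeneracy occurs that must then be excluded using the hypotheses on $A$ and $B$---but the second half is executed by a genuinely different technique. The paper solves its collinearity equation for $v$ as a M\"obius function of the real ratio $\lambda_j$, so each non-degenerate configuration confines $v$ to a circle or a line, and its degenerate case ($b_j-b_1-\lambda_j(b_2-b_1)=0$ for all $j$) directly exhibits $m$ distinct collinear points of $B$. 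You instead encode collinearity as the vanishing of real polynomials $f_j$ in $(x,y)$ whose quadratic part is a multiple of $x^2+y^2$, and invoke the nullity of the zero set of a nonzero real polynomial; your degenerate case (all $f_j\equiv 0$) is an identity in $v$ rather than a pointwise condition, and it genuinely requires the two extra steps you supply: the coefficient extraction over \emph{arbitrary} triples (needed because the base-edge coefficients say nothing when $a_1=a_2$ or $b_1=b_2$), and the $(s_i,t_i)$ argument converting ``all $a_i$ on a line and all $b_i$ on a line'' into $m$ \emph{distinct} collinear points of $A$ or of $B$. Your route is more elementary and self-contained (no appeal to M\"obius transformations), and it still identifies the exceptional sets as conics of the form $c(x^2+y^2)+(\text{linear in }x,y)=0$, i.e.\ circles or lines, matching the paper's conclusion; the price is a longer degeneracy analysis, which the paper avoids because its degenerate condition pins down distinct collinear $b_j$'s essentially for free.
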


For presentation purposes we defer its proof and instead proceed to the proof
of the theorem.

\begin{proof}
[Proof of Theorem \ref{th:main}]Let $A_{1}=A_{1}^{\ast}=A$ and suppose $A_{j}$
and $A_{j}^{\ast}$ have been defined. By Lemma \ref{lem:tech nomcoll} there is
a set $A_{j+1}$, similar to $A$, such that $A_{j+1}^{\ast}:=A_{j}^{\ast
}+A_{j+1}$ does not have $m$ points on a line and $|A_{j+1}^{\ast}%
|=|A_{j}^{\ast}||A_{j+1}|$. For every $j\geq1$, $|A_{j}^{\ast}|=|A_{j-1}%
^{\ast}||A|=|A_{j-2}^{\ast}||A|^{2}=\cdots=|A|^{j}$. Moreover, by Lemma
\ref{lem:Minkovski}, it follows that%
\begin{align*}
I\cdot S_{P}\left(  A_{j}^{\ast}\right)  +\left\vert A_{j}^{\ast}\right\vert
&  =I\cdot S_{P}\left(  A_{j-1}^{\ast}+A_{j}\right)  +\left\vert A_{j-1}%
^{\ast}\right\vert \left\vert A_{j}\right\vert \\
&  \geq\left(  I\cdot S_{P}\left(  A_{j-1}^{\ast}\right)  +\left\vert
A_{j-1}^{\ast}\right\vert \right)  \left(  I\cdot S_{P}\left(  A\right)
+\left\vert A\right\vert \right)  \\
&  \geq\cdots\\
&  \geq\left(  I\cdot S_{P}\left(  A\right)  +\left\vert A\right\vert \right)
^{j}\text{.}%
\end{align*}

If $S_{P}(A)=0$, then $i_{P}(A)=1$ and the result is trivial. Assume
$i_{P}(A)>1$ and suppose $\left\vert A\right\vert ^{j}\leq n<\left\vert
A\right\vert ^{j+1}$. The previous inequality yields%
\begin{align}
S_{P}(n,m)  &  \geq S_{P}(A_{j}^{\ast})\geq\frac{1}{I}(\left(  I\cdot
S_{P}(A)+\left\vert A\right\vert \right)  ^{j}-\left\vert A\right\vert
^{j})\label{eq: iteration}\\
&  \geq\frac{1}{I}\left(  \left\vert A\right\vert ^{j\cdot i_{P}(A)}-n\right)
\geq\frac{1}{I}\left(  \left(  \frac{n}{\left\vert A\right\vert }\right)
^{i_{P}(A)}-n\right) \label{eq: ineqmain}\\
&  \geq cn^{i_{P}(A)},\nonumber
\end{align}
for some constant $c=c(A,P)$ if $n$ is large enough. For instance,
$c=(2I\left\vert A\right\vert ^{i_{P}(A)})^{-1}$ works whenever $n^{i_{P}%
(A)-1}\geq2\left\vert A\right\vert ^{i_{P}(A)}$.
\end{proof}

Figure \ref{fig: minkovski} shows a set $A_{3}^{\ast}$ obtained from this
procedure when $P=\triangle$ the equilateral triangle and $A$ is the starting
set with 15 points and 29 equilateral triangles constructed in Section
\ref{sec: equilateral}. Finally, we present the proof of Lemma
\ref{lem:tech nomcoll}.

\begin{proof}
[Proof of Lemma \ref{lem:tech nomcoll}]We show that $\mathcal{S}$ is the union
of a finite number of algebraic sets, all of them of real dimension at most
one. This immediately implies that the Lebesgue measure of such a set is zero.
For every $a\in A$ and $b\in B,$ let $q(a,b)=a+vb$. Suppose that
$q(a_{1},b_{1})=q(a_{2},b_{2})$ with $(a_{1},b_{1})\neq(a_{2},b_{2})$. Then
$v(b_{2}-b_{1})+(a_{2}-a_{1})=0$ and $b_{1}\neq b_{2}$. Thus $v=-(a_{2}%
-a_{1})/(b_{2}-b_{1})$. Therefore there are at most $\tbinom{\left\vert
A\right\vert \left\vert B\right\vert }{2}$ values of $v$ for which $A+vB$ has
less than $\left\vert A\right\vert \left\vert B\right\vert $ points.

Now, suppose that the set $\{q(a_{j},b_{j}):1\leq j\leq m\}$ consists of $m$
points on a line, where $\{a_{j}\}\subseteq A$ and $\{b_{j}\}\subseteq B$.
Then for $3\leq j\leq m$, we have $q(a_{j},b_{j})-q(a_{1},b_{1})=\lambda
_{j}\left(  q(a_{2},b_{2})-q(a_{1},b_{1})\right)  $ where $\lambda_{j}\neq0,1$
is a real number. Thus for $3\leq j\leq m$,%
\begin{equation}
v\left(  b_{j}-b_{1}-\lambda_{j}\left(  b_{2}-b_{1}\right)  \right)
=\lambda_{j}\left(  a_{2}-a_{1}\right)  -\left(  a_{j}-a_{1}\right)  .
\label{eqn: v and lambda}%
\end{equation}
First assume all $b_{j}$ are equal. Then all $a_{j}$ are pairwise different,
otherwise we would have less than $m$ points initially. Moreover, Equation
(\ref{eqn: v and lambda}) implies that $(a_{j}-a_{1})/(a_{2}-a_{1}%
)=\lambda_{j}\in\mathbb{R}$ for all $3\leq j\leq m$. But this contradicts the
fact that there are no $m$ points on a line in $A$. By possibly relabeling the
points, we can now assume that $b_{1}\neq b_{2}$. If $b_{j}-b_{1}-\lambda
_{j}\left(  b_{2}-b_{1}\right)  \neq0$ for some $j$, then
\[
v=-\frac{\lambda_{j}\left(  a_{2}-a_{1}\right)  -\left(  a_{j}-a_{1}\right)
}{\lambda_{j}\left(  b_{2}-b_{1}\right)  -\left(  b_{j}-b_{1}\right)
}\text{.}%
\]
M\"{o}bius Transformations send circles (or lines) to circles (or lines); thus
the last equation, seen as a parametric equation on the real variable
$\lambda_{j}$, represents a circle (or a line) in the plane. The remaining
case is when
\begin{equation}
b_{j}-b_{1}-\lambda_{j}\left(  b_{2}-b_{1}\right)  =0\text{ for }3\leq j\leq
m. \label{eqn: zerocond}%
\end{equation}
Since $\lambda_{j}\neq0,1$, then $b_{j}\neq b_{1},b_{2}$. Suppose $b_{j}%
=b_{k}$ for $3\leq j<k\leq m$, then $\lambda_{j}=\lambda_{k}$ and by
(\ref{eqn: v and lambda}) and (\ref{eqn: zerocond}) we deduce that
$a_{j}=a_{k}$. This contradicts the fact that $q(a_{j},b_{j})$ and
$q(a_{k},b_{k})$ are two different points. Therefore all the $b_{j}$ are
pairwise different, and then by (\ref{eqn: zerocond}) all the $b_{j}$ are on a
line. This is a contradiction since there are no $m$ points on a line in $B$.

Therefore $\mathcal{S}$ is the union of a finite number of points and at most
$\tbinom{\left\vert A\right\vert \left\vert B\right\vert }{m}$ circles (or
lines), and consequently it has zero Lebesgue measure.
\end{proof}

\section{Constructions of the initial sets}

All the constructions of the initial sets $A$ we provide have
explicit coordinates so it is possible to calculate $S_{P}(A)$ via
the following algorithm \cite{Br02}: Fix two points $p_{1},p_{2}\in
P$, for every ordered pair $(a_{1},a_{2})\in A\times A$ of distinct
points consider the unique orientation-preserving similarity
transformation $f$ that maps $p_{1}\mapsto a_{1}$ and $p_{2}\mapsto
a_{2}$. An explicit expression is $f(z)=\frac
{a_{1}-a_{2}}{p_{1}-p_{2}}z+\frac{a_{2}p_{1}-a_{1}p_{2}}{p_{1}-p_{2}}$.
Then verify whether $f(P)\subseteq A$. If $N$ is the number of pairs
$(a_{1} ,a_{2})$ for which $f(P)\subseteq A$, then
$S_{P}(A)=N/\left\vert \mathrm{Iso}^{+}(P)\right\vert $. The running
time of this algorithm is $O(|P||A|^{2}\log|A|)$. Likewise, it is
possible to verify that no 3 points are on a line by simply checking
the pairwise slopes of every triple of distinct points. We first
present our construction for arbitrary patterns $P$.

\subsection{Arbitrary pattern $P$\label{sec: arbitrary}}

\begin{proof}
[Proof of Theorem \ref{th: general P}]Let $z_{0}\in\mathbb{C}\backslash P$ be
an arbitrary point and let $p_{1},p_{2}\in P$ be two fixed points in $P$. For
every $p\in P$, there is exactly one orientation-preserving similarity
function $f_{p}$ such that $p_{1}\mapsto z_{0}$ and $p_{2}\mapsto p$; indeed
an explicit expression of such function is $f_{p}(z)=\left(  \frac{z_{0}%
-p}{p_{1}-p_{2}}\right)  z+\frac{pp_{1}-z_{0}p_{2}}{p_{1}-p_{2}}$.
Let $A$ be the point set obtained by taking the image of $P$ under
every one of the functions $f_{p}$, that is $A=\bigcup_{p\in
P}f_{p}(P)$. For almost all $z_0$, except for a subset of real
dimension 1, the set $A$ does not have $m$ points on a line and all
the sets $f_{p}(P)\backslash\{z_{0}\}$ are pairwise disjoint, that
is $|A|=1+k\left(  k-1\right)  =k^{2}-k+1$. This fact can be proved
along the same lines as Lemma \ref{lem:tech nomcoll}, we omit the
details.
\begin{figure}
[h]
\begin{center}
\includegraphics[
height=2.7691in,
width=6.0502in
]%
{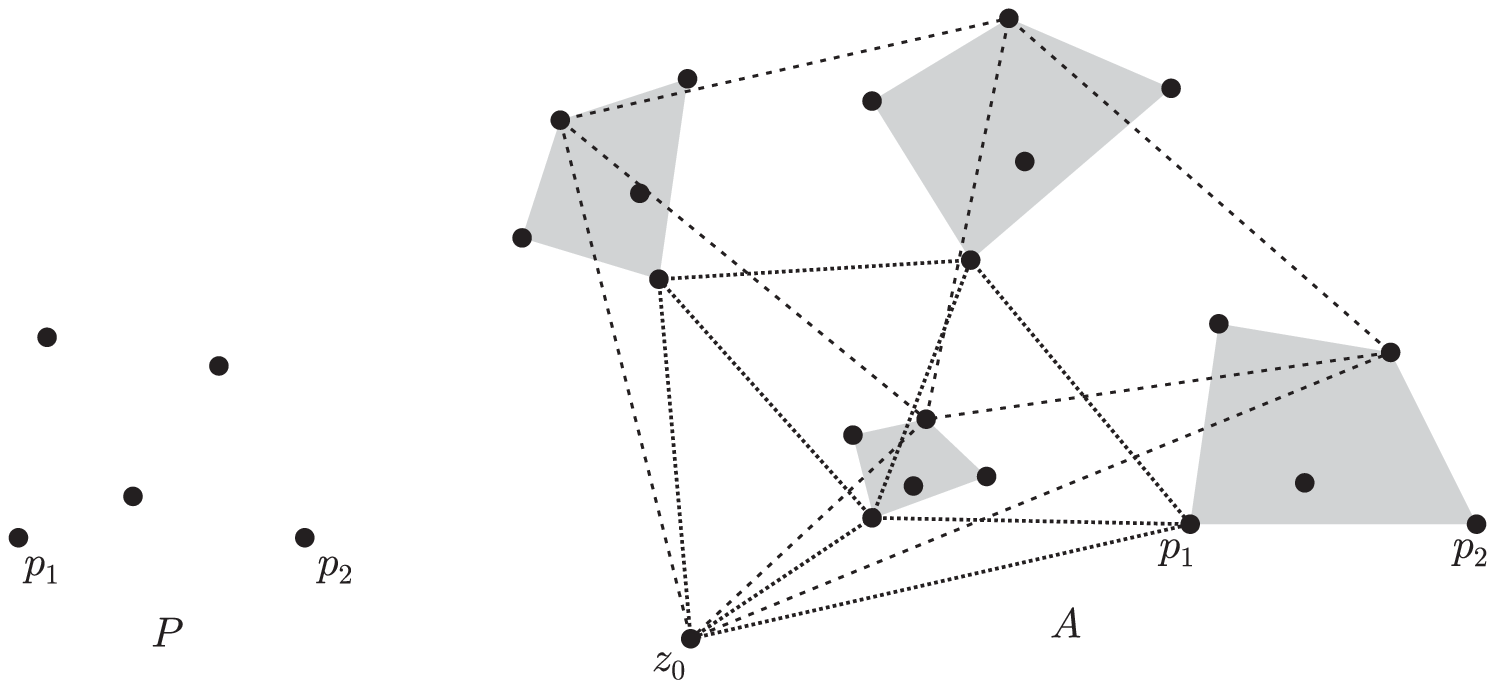}%
\caption{$A$ has no $m$ points on a line, $\left\vert A\right\vert =\left\vert
P\right\vert ^{2}-\left\vert P\right\vert +1$ points and $S_{P}(A)=2\left\vert
P\right\vert -1$. }%
\end{center}
\end{figure}
By construction, each of the $k$ sets $f_{p}(P)$ is similar to $P$. For every
$q\in P\backslash\{p_{1}\}$, the set%
\begin{equation}
\left\{  f_{p}(q):p\in P\right\}  =\frac{p_{1}-q}{p_{1}-p_{2}}P+\frac
{z_{0}\left(  q-p_{2}\right)  }{p_{1}-p_{2}}\label{eq: general P}%
\end{equation}
is also similar to $P$ and different from the previous copies of $P$ we
counted before. Thus $S_{P}(A)\geq2k-1$ and
\[
i_{P}(A)\geq\frac{\log\left(  S_{P}(A)+\left\vert A\right\vert \right)  }%
{\log\left\vert A\right\vert }=\frac{\log\left(  k^{2}+k\right)  }{\log\left(
k^{2}-k+1\right)  }\text{.}%
\]
The conclusion follows from Theorem \ref{th:main}.
\end{proof}

\subsection{Triangles}

The following table gives the currently best available initial set $A$ for
each pattern $P$ in Theorem \ref{th:initialsetstriang}. That is, the set $A$
with the largest index $i_{P}\left(  A\right)  $ known to date. The lower
bound stated in Theorem \ref{th:initialsetstriang} is then given by Theorem
\ref{th:main} applied to $A$.

\begin{table}[h]
\begin{center}%
\begin{tabular}
[c]{lc|ccc}
&  & \multicolumn{3}{c}{Best available $A$}\\
Triangular pattern $P=T$ & \multicolumn{1}{|c|}{$\left\vert \mathrm{Iso}%
^{+}(T)\right\vert $} & $\left\vert A\right\vert $ & $S_{T}\left(  A\right)  $
& $i_{T}\left(  A\right)  $\\\hline\hline
most scalene triangles & \multicolumn{1}{|c|}{$1$} & $14$ & $26$ &
\multicolumn{1}{l}{$\log40/\log14>1.\,\allowbreak397$}\\
all scalene triangles & $1$ & $5$ & $4$ & \multicolumn{1}{l}{$\log
9/\log5>1.\,\allowbreak365$}\\%
\begin{tabular}
[c]{l}%
isosceles triangle with\\
largest angle $\neq2\pi/3,\pi/2,\pi/3$%
\end{tabular}
& \multicolumn{1}{|c|}{$1$} & $8$ & $9$ & \multicolumn{1}{l}{$\log
17/\log8>\allowbreak1.\,\allowbreak362$}\\
$\left(  2\pi/3,\pi/6,\pi/6\right)  $-isosceles triangle &
\multicolumn{1}{|c|}{$1$} & $84$ & $444$ & \multicolumn{1}{l}{$\log
528/\log84>1.\,\allowbreak414$}\\
$\left(  \pi/2,\pi/4,\pi/4\right)  $-isosceles triangle &
\multicolumn{1}{|c|}{$1$} & $24$ & $120$ & \multicolumn{1}{l}{$\log
144/\log24>\allowbreak1.\,\allowbreak563$}\\
equilateral triangle & \multicolumn{1}{|c|}{$3$} & $15$ & $29$ &
\multicolumn{1}{l}{$\log102/\log15=\allowbreak1.\,\allowbreak707$}%
\end{tabular}
\end{center}
\caption{Indices for the best initial sets when $P=T$ is a triangle.}%
\end{table}

It is worth noting that our bound for scalene triangles is better than the one
for (most) isosceles triangles. The intuitive reason for this is that it is
harder to obtain better initial sets when the pattern has any type of
symmetries. This difficulty is overtaken by the factor $\left\vert
\mathrm{Iso}^{+}(P)\right\vert =3$ when $P$ is an equilateral triangle. We
extend our comments in the concluding remarks.%
\begin{figure}
[h]
\begin{center}
\includegraphics[
height=2.4491in,
width=4.7089in
]%
{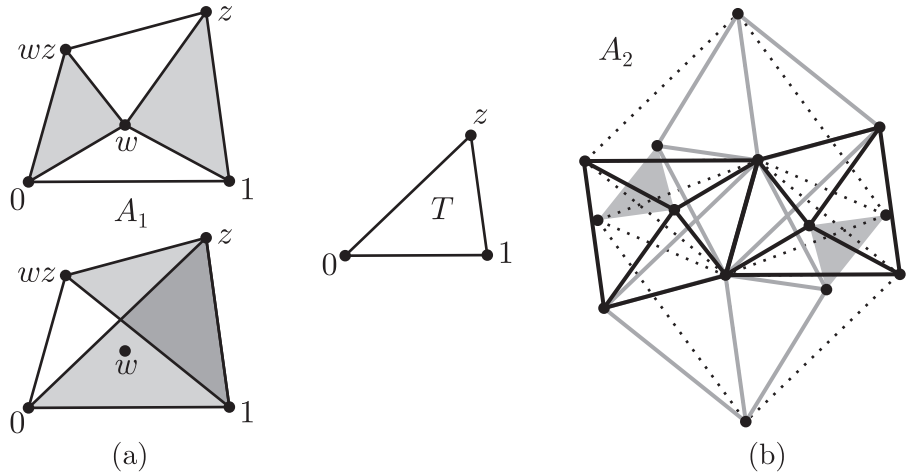}%
\caption{(a) 5-point set $A_{1}$ with $S_{T}(A_{1})=4$, (b) 14-point set
$A_{2}$ with $S_{T}(A_{2})=26$. }%
\label{Fig: scalene}%
\end{center}
\end{figure}

\subsubsection{Scalene triangles\label{sec: scalene}}

We first give a construction for all scalene triangles. If the pattern $P=T$
consists of the points (complex numbers) $0,1,$ and $z\notin\mathbb{R}$, then
the initial set $A_{1}=\left\{  0,1,z,w=z-1+1/z,wz\right\}  $ is in general
position for every scalene triangle $T$, see Figure \ref{Fig: scalene}(a).
$A_{1}$ has $4$ triangles similar to $T$: $\left(  0,1,z\right)  ,\left(
z,w,1\right)  ,\left(  1,z,wz\right)  $ and $\left(  0,w,wz\right)  $. For all
$z$, but a $1$-dimensional subset of $\mathbb{C}$, the construction in Figure
\ref{Fig: scalene}(b) is in general position and gives a better lower bound
for $S_{T}^{\prime}\left(  n\right)  $. The corresponding initial set is
$A_{2}=A\cup A^{\prime}$ where $A=A_{1}\cup zA_{1}\cup\left\{  wz^{2}/\left(
z-1\right)  \right\}  $ and $A^{\prime}$ is the $\pi$-rotation of $A$ about
$wz/2$, that is $A^{\prime}=-A+wz$. Because some points overlap, $A_{2}$ has
only $14$ points,%
\[
A_{2}=\left\{  0,1,z,w,wz,z^{2},wz^{2},wz^{2}/\left(  z-1\right)
,wz-1,wz-z,wz-w,1-z,wz\left(  1-z\right)  ,wz/\left(  1-z\right)  \right\}  .
\]
Among the points in $A_{2}$, there are $6$ similar copies of $A_{1}$ with no
triangles similar to $T$ in common: $A_{1}$, $zA_{1}$, $\frac{z}{1-z}\left(
A_{1}-wz\right)  $, and their $\pi$-rotations about $wz/2$. In addition, the
triangles $(w,wz(1-z),wz^{2}/(z-1))$ and $(wz^{2},wz-w,wz/(1-z))$ are similar
to $T$ and are not contained in the 6 sets similar to $A_{1}$ mentioned
before, so $S_{T}(A)\geq26$.%
\begin{figure}
[h]
\begin{center}
\includegraphics[
height=2.3679in,
width=6.5017in
]%
{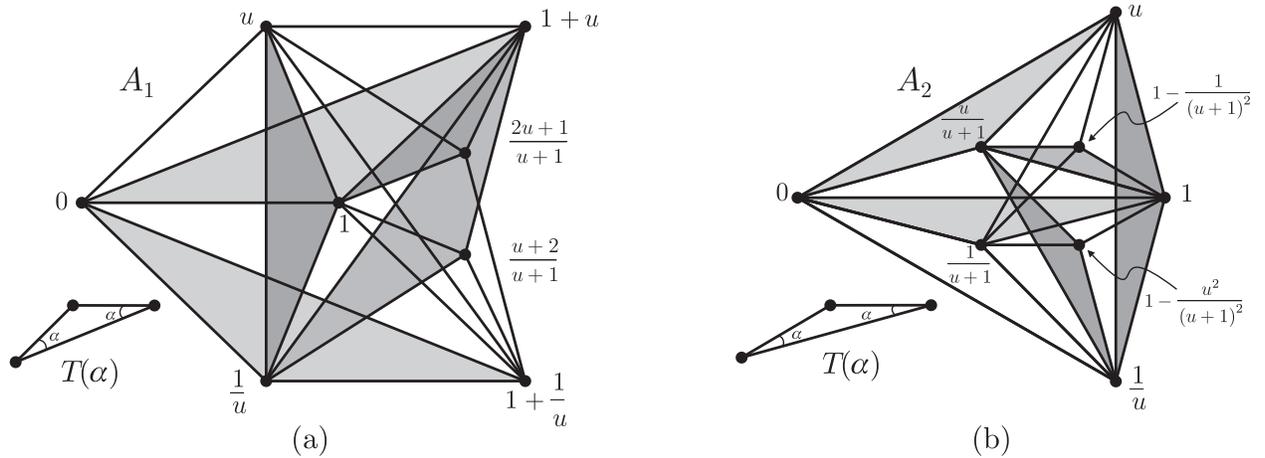}%
\caption{Sets $A_{i}$ with $\left\vert A_{i}\right\vert =8$ and $S_{T(\beta
)}(A_{i})=9$. $A_{1}$ is in general position for $\alpha\neq\pi/12,\pi
/6,\pi/4,\pi/3,$ or $5\pi/12$, $A_{2}$ is in general position for $\alpha
\neq\arccos\sqrt{(3+\sqrt{17})/8}$, $\pi/6$, $\pi/4$, or $\pi/3$.}%
\label{Fig: isosceles}%
\end{center}
\end{figure}

\subsubsection{Isosceles triangles\label{sec: isosceles}}

Let the pattern $P=T(\alpha)$ be an isosceles triangle with angles
$\alpha,\alpha$, and $\pi-2\alpha$, where $0<\alpha<\pi/2$. We use as initial
set one of the following two constructions, each with $8$ points and $9$
copies of $T(\alpha)$, i.e., $S_{T(\alpha)}(A)=9$. There are three exceptions
that are analyzed later in Section \ref{sec: subsets of reg}: $T(\pi/6),$
$T(\pi/4)$, and the equilateral triangle $T(\pi/3)$. Let $u=e^{2\alpha i}$ so
that $T(\alpha)=\{0,1,-u\}$. The first initial set is $A_{1}=B_{1}%
\cup\overline{B_{1}}$ where $B_{1}=\left\{  0,1,u,1+u,\frac{2u+1}%
{u+1}\right\}  $ and $\overline{B_{1}}$ is the conjugate of $B_{1}$, i.e.,
$B_{1}=\{\overline{b}:b\in B_{1}\}$. (See Figure \ref{Fig: isosceles}(a).)
This configuration is in general position as long as $\alpha\neq k\pi/12$,
$k\in\mathbb{Z}$. It has $9$ copies of $T(\alpha)$: $\left(  0,1,1+u\right)
$, $\left(  1+u,u,0\right)  $, $\left(  1,\frac{2u+1}{u+1},1+u\right)  $,
$\left(  1+1/u,\frac{2u+1}{u+1},u\right)  $, their reflections about the real
axis, and $\left(  1/u,1,u\right)  $. The actual points in $A_{1}$ are%
\[
A_{1}=\left\{  0,1,u,\frac{1}{u},1+u,1+\frac{1}{u},\frac{u+2}{u+1},\frac
{2u+1}{u+1}\right\}  .
\]

The second initial set is $A_{2}=B_{2}\cup\overline{B_{2}}$ where
$B_{2}=\left\{  0,1,u,\frac{u}{u+1},1-\frac{1}{\left(  u+1\right)  ^{2}%
}\right\}  $. (See Figure \ref{Fig: isosceles}(b).) This set is in general
position as long as $\alpha\neq\arccos\sqrt{(3+\sqrt{17})/8},\pi/6,\pi/4,$ or
$\pi/3$. It has $9$ copies of $T(\alpha)$: $\left(  1,\frac{u}{1+1},0\right)
$, $\left(  0,\frac{u}{u+1},u\right)  $, $\left(  1,1-\frac{1}{\left(
u+1\right)  ^{2}},\frac{u}{u+1}\right)  $, $\left(  \frac{1}{1+u},1-\frac
{1}{\left(  u+1\right)  ^{2}},u\right)  $, their reflections about the real
axis, and $(1/u,1,u)$. The actual points in $A_{2}$ are%
\[
A_{2}=\left\{  0,1,u,\frac{1}{u},\frac{u}{u+1},\frac{1}{u+1},1-\frac
{1}{\left(  u+1\right)  ^{2}},1-\frac{u^{2}}{\left(  u+1\right)  ^{2}%
}\right\}  .
\]

\subsubsection{Equilateral triangle\label{sec: equilateral}}

Let $z\in\mathbb{C}$ and $\omega=e^{2i\pi/3}$ so that $\omega^{2}+\omega+1=0$.
When the pattern $P=\triangle=\{1,\omega,\omega^{2}\}$ is the equilateral
triangle, we use as initial set $A=B\cup\omega B\cup\omega^{2}B$ where
$B=\{1,-z\}\cup(-1+zP)$. For all $z$ but a $1$-dimensional subset of
$\mathbb{C}$, the set $A$ is in general position and $\left\vert A\right\vert
=15$. The set $B_{1}=\bigcup_{k=0}^{2}\omega^{k}(-1+zP)$ is the Minkovski Sum
$-P+zP$, thus by Lemma \ref{lem:Minkovski} there are at least 9 equilateral
triangles in $B_{1}$. In addition $(1,\omega,\omega^{2})$ and $(-z,-z\omega
,-z\omega^{2})$ are equilateral, and each of the points $1$, $\omega$, and
$\omega^{2}$ is incident to 6 more equilateral triangles: $(1,-\omega
^{2}+z,-\omega^{2}z)$, $(1,-\omega^{2}+z\omega,-z)$, $(1,-\omega^{2}%
+z\omega^{2},-z\omega)$, $(1,-\omega^{2}z,-\omega+z\omega)$, $(1,-z,-\omega
+z\omega^{2})$, and $(1,-\omega z,-\omega+z)$, together with the $\pi/3$- and
$2\pi/3$-rotations of these triangles about the origin. Thus $S_{\triangle
}(A)\geq29$. It can be checked that there are only 29 equilateral triangles in
$A$.%
\begin{figure}
[h]
\begin{center}
\includegraphics[
height=1.8049in,
width=6.5017in
]%
{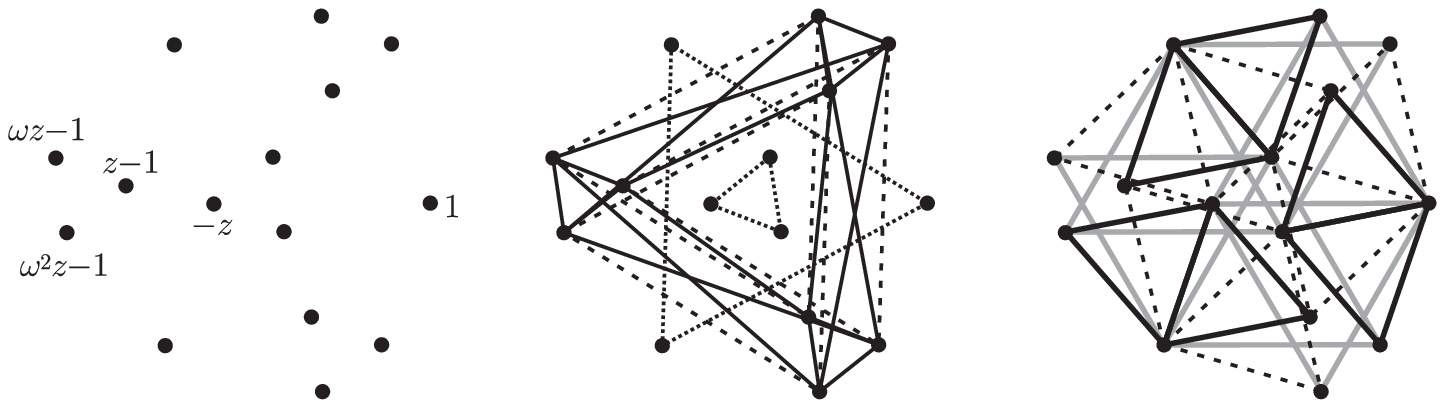}%
\caption{Initial set $A$ with $\left\vert A\right\vert =15$ and $S_{\triangle
}(A)=29$.}%
\end{center}
\end{figure}

\subsection{Regular polygons\label{sec: Regular Poly}}

As in the previous section, we construct initial sets for each $k$-regular
polygon with $k\in\{4$, $5$, $6$, $8$, $10\}$. The following table gives the
currently best available initial set $A$ for each of these regular polygons.

\begin{table}[h]
\begin{center}%
\begin{tabular}
[c]{lc|ccc}
&  & \multicolumn{3}{c}{Best available $A$}\\
Regular polygon $R(k)$ & \multicolumn{1}{|c|}{$\left\vert \mathrm{Iso}%
^{+}(R(k))\right\vert $} & $\left\vert A\right\vert $ & $S_{R(k)}\left(
A\right)  $ & $i_{R(k)}\left(  A\right)  $\\\hline\hline
Square $=R(4)$ & \multicolumn{1}{|c|}{$4$} & $24$ & $30$ &
\multicolumn{1}{l}{$\log144/\log24>1.\,\allowbreak563$}\\
Hexagon $=R(6)$ & \multicolumn{1}{|c|}{$6$} & $74$ & $84$ &
\multicolumn{1}{l}{$\log528/\log84>\allowbreak1.\,\allowbreak414$}\\
Octagon $=R(8)$ & \multicolumn{1}{|c|}{$8$} & $208$ & $138$ &
\multicolumn{1}{l}{$\log1312/\log208>1.\,\allowbreak345$}\\
Decagon $=R(10)$ & \multicolumn{1}{|c|}{$10$} & $420$ & $222$ & $\log
2640/\log420>\allowbreak1.\,\allowbreak304$\\
Pentagon $=R(5)$ & \multicolumn{1}{|c|}{$5$} & $120$ & $264$ &
\multicolumn{1}{l}{$\log1440/\log120>\allowbreak1.\,\allowbreak519$}%
\end{tabular}
\end{center}
\caption{Indices for the best initial sets for regular polygons.}%
\end{table}

We first present the construction for the even-sided polygons and then the
construction for the regular pentagon. Finally we explain how to use these
initial sets for any pattern that is a subset of a regular polygon.

\subsubsection{Even sided regular polygons.\label{sec: even regular}}

The following construction of the initial set $A$ is in general position for
all even $k$, however the index $i_{R(k)}(A)$ is only better than
$i_{R(k)}(R(k))$ when $k\leq10$. Let $\omega=e^{2\pi i/k}$, $k$ even, and
$z\in\mathbb{C}$ an arbitrary nonzero complex number. Suppose that the regular
$k$-gon $R(k)$ is given by $R(k)=P=1+\omega+z\{\omega^{j}:0\leq j\leq k-1\}$.
The reason why we translated the canonical regular polygon by $1+\omega$ and
rotated and magnified it by $z$ will become apparent soon. To construct our
initial set $A$, we first follow the construction of Theorem
\ref{th: general P} applied to $P$ with $p_{1}=1+\omega+z$, $p_{2}%
=1+\omega+z\omega$, and $z_{0}=2$. We obtain a set $A_{1}$ consisting of
$z_{0}$ and $k-1$ disjoint similar copies of $P$ given by (\ref{eq: general P}%
). That is, $A_{1}=\{2\}\cup\bigcup_{j=1}^{k-1}B_{j}$ where%
\[
B_{j}=\frac{1-\omega^{j}}{1-\omega}P+\frac{2\left(  \omega^{j}-\omega\right)
}{1-\omega}\text{, }1\leq j\leq k-1\text{.}%
\]
Note that there are exactly $k$ similar copies of $P$ with vertex $z_{0}=2$.
Furthermore, $\omega^{k/2}=-1$ because $k$ is even and thanks to the
translation by $1+\omega$ in the definition of $P$, we have that
\[
B_{k/2}=\left(  \frac{2z}{1-\omega}\right)  \left\{  \omega^{j}:0\leq j\leq
k-1\right\}  ,
\]
that is $B_{k/2}$ is a $k$-regular polygon centered at the origin. Now we add
to the construction every rotation of $A_{1}$ by an integer multiple of
$2\pi/k$. More precisely, we let%
\begin{equation}
A=\bigcup_{l=0}^{k-1}\omega^{l}A_{1}\text{.}\label{eq: union}%
\end{equation}%
\begin{figure}
[h]
\begin{center}
\includegraphics[
height=1.8896in,
width=6.5017in
]%
{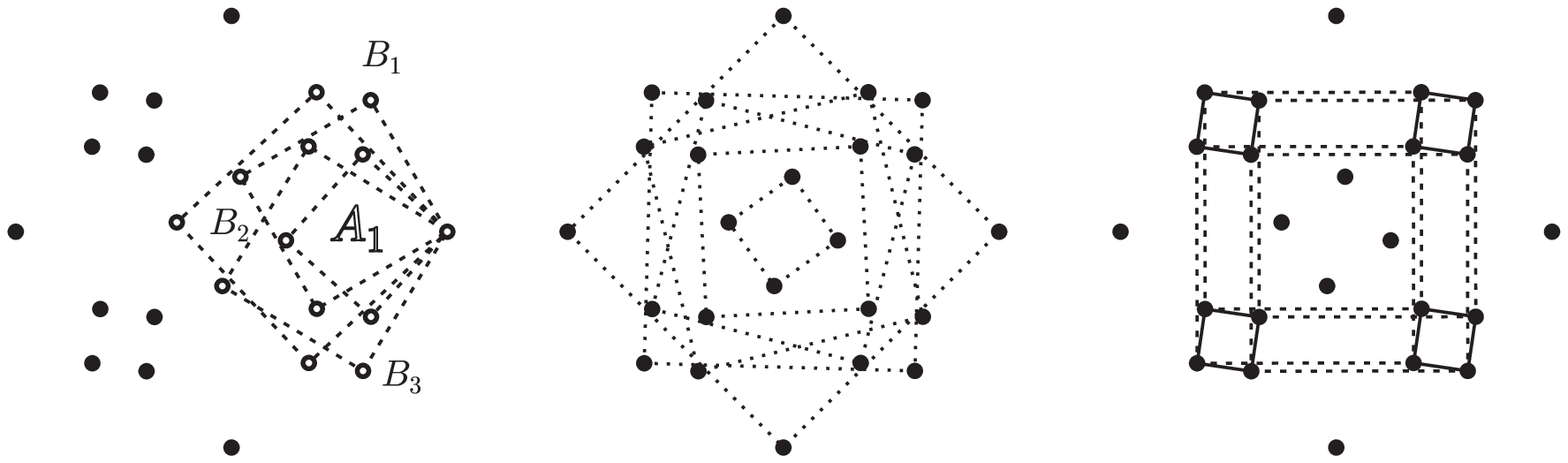}%
\caption{Initial set $A$ for $R(4)$. $\left\vert A\right\vert =24$ and
$S_{P}(A)=30$.}%
\end{center}
\end{figure}
Note that $B_{k/2}$ is a subset of all the sets $\omega^{l}A_{1}$. Because $k$
is even, $-P=P-2-2\omega$, thus for every $1\leq j\leq k/2-1$,
\[
\omega^{j}B_{k-j}=\frac{1-\omega^{j}}{1-\omega}\left(  -P\right)
+\frac{2\left(  1-\omega^{j+1}\right)  }{1-\omega}=\frac{1-\omega^{j}%
}{1-\omega}P+\frac{2\left(  \omega^{j}-\omega\right)  }{1-\omega}%
=B_{j}\text{.}%
\]
For almost all $z\in\mathbb{C}$, except for a subset of real dimension 1,
there are no 3 collinear points in $A$ and also for every $j\neq k/2$ \ the
sets $B_{j}$ and $\omega^{l}B_{i}$ are disjoint except for the pairs
$(i,l)=(j,0)$ and $(i,l)=(k-j,j)$. It follows that every set of the form
$\omega^{l}B_{j}$ with $j\neq k/2$ is a subset of exactly two terms in the
union from Equation (\ref{eq: union}) and it is disjoint from the rest. Thus%
\[
\left\vert A\right\vert =\left\vert B_{k/2}\right\vert +\left\vert
\bigcup_{\substack{0\leq l\leq k-1\\j\neq k/2}}\omega^{l}B_{j}\right\vert
+\left\vert \left\{  \omega^{l}z_{0}:0\leq l\leq k-1\right\}  \right\vert
=k+\frac{1}{2}k\left(  k-2\right)  +k=\frac{k}{2}\left(  k^{2}-2k+4\right)
\text{.}%
\]
To bound the number of $k$-regular polygons in $A$, first note that for each
$0\leq l\leq k-1$, there are at least $k$ regular polygons with a vertex in
$\omega^{l}z_{0}$ contained in $\omega^{l}A_{1}$ and all of these $k^{2}$
copies of $P$ are different. For every $1\leq j\leq k/2-1$ the set
$\bigcup_{l=0}^{k-1}\omega^{l}B_{j}$ is the Minkovski Sum of two copies of
$P$, namely
\[
P_{1}=\left(  1+\omega^{j}\right)  \left\{  \omega^{l}:0\leq l\leq
k-1\right\}  \text{ and }P_{2}=\frac{1-\omega^{j}}{1-\omega}z\left\{
\omega^{l}:0\leq l\leq k-1\right\}  \text{,}%
\]
with exactly $k^{2}$ points. Thus, by Lemma \ref{lem:Minkovski}, we have that
$S_{P}(\bigcup_{l=0}^{k-1}\omega^{l}B_{j})=S_{P}(P_{1}+P_{2})\geq3k$.
Furthermore, all these $3k(k/2-1)$ regular polygons are distinct and also
different from those previously counted. Finally, there are two extra polygons
not yet counted, namely $B_{k/2}$ and $\left\{  \omega^{l}z_{0}:0\leq l\leq
k-1\right\}  $. Thus $S_{P}(A)\geq k^{2}+3k(k/2-1)+2=\frac{1}{2}(5k^{2}-6k+4)$
and then%
\[
i_{P}(A)\geq\frac{\log\left(  \frac{k}{2}(5k^{2}-6k+4)+\frac{k}{2}\left(
k^{2}-2k+4\right)  \right)  }{\log\left(  \frac{k}{2}\left(  k^{2}%
-2k+4\right)  \right)  }=\frac{\log\left(  3k^{3}-4k^{2}+4k\right)  }%
{\log\left(  \frac{k}{2}\left(  k^{2}-2k+4\right)  \right)  }.
\]
The conclusion follows by setting $k=4,6,8,\,\ $or $10$.
\begin{figure}
[hh]
\begin{center}
\includegraphics[
height=2.4509in,
width=5.7216in
]%
{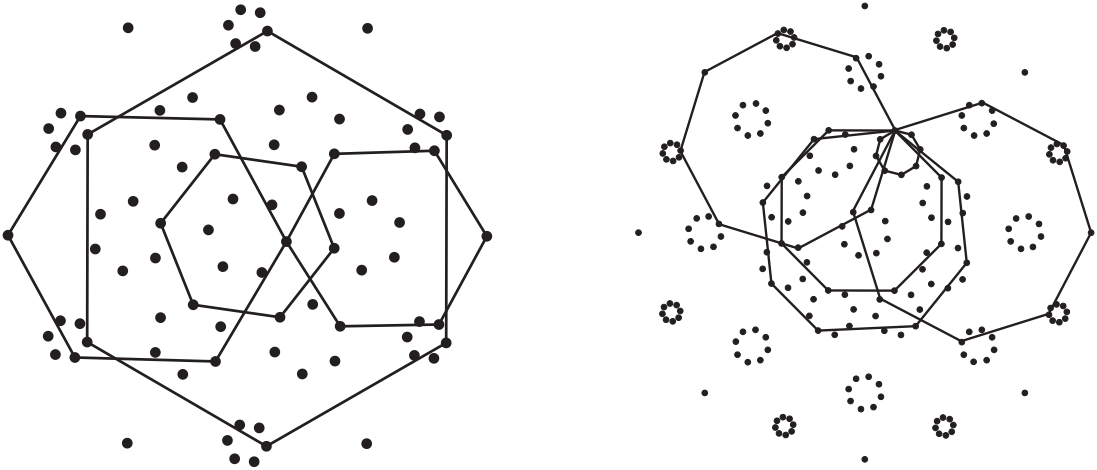}%
\caption{Best known initial sets for $P=R(6)$ and $P=R(8)$.}%
\end{center}
\end{figure}

\subsubsection{The regular pentagon}

Let $\omega=e^{2\pi i/5}$, for every $z\in\mathbb{C}$ set $R(5)=P=z\{1,\omega
,\omega^{2},\omega^{3},\omega^{4}\}$. Define
\[
A_{1}=P+\frac{\sqrt{5}+3}{2}\text{, }A_{2}=\frac{\sqrt{5}+1}{2}\left(
-P+1\right)  \text{, and }A_{3}=\left(  \omega^{2}-1\right)  \left\{
z,\omega+1\right\}  \text{.}%
\]
Now we consider all the $2\pi k/5$ rotations of these points, $0\leq k\leq4$,
as well as their symmetrical points with respect to the origin. That is, we
define $B=A_{1}\cup(-A_{1})\cup A_{2}\cup(-A_{2})\cup A_{3}\cup(-A_{3})$ and
$A=\bigcup_{k=0}^{4}\omega^{k}B$. For almost all $z\in\mathbb{C}$, except for
a subset of real dimension one, $A$ has exactly 120 points and has no three
points on a line. There are at least 264 regular pentagons with vertices in
$A$: for each $j=1,2$, the set $\bigcup_{k=0}^{4}\omega^{k}\left(  \pm
A_{j}\right)  $ is the Minkovski Sum of two regular pentagons, and thus by
Lemma \ref{lem:Minkovski} each of these 4 sets has 15 regular pentagons, the
point set $\bigcup_{k=0}^{4}\omega^{k}\left(  \pm A_{3}\right)  $ consists of
two regular decagons so it contains 4 pentagons, finally each of the 20 points
in $\bigcup_{k=0}^{4}\omega^{k}\left(  \pm A_{3}\right)  $ is incident to 10
more regular pentagons different from the ones previously counted (see Figure
\ref{fig: Penta}). In fact every point in $A$ is incident to exactly 11
regular pentagons and it turns out that the set $A$ has an interesting set of
automorphisms that preserve the regular pentagons.
\begin{figure}
[h]
\begin{center}
\includegraphics[
height=2.0072in,
width=6.5017in
]%
{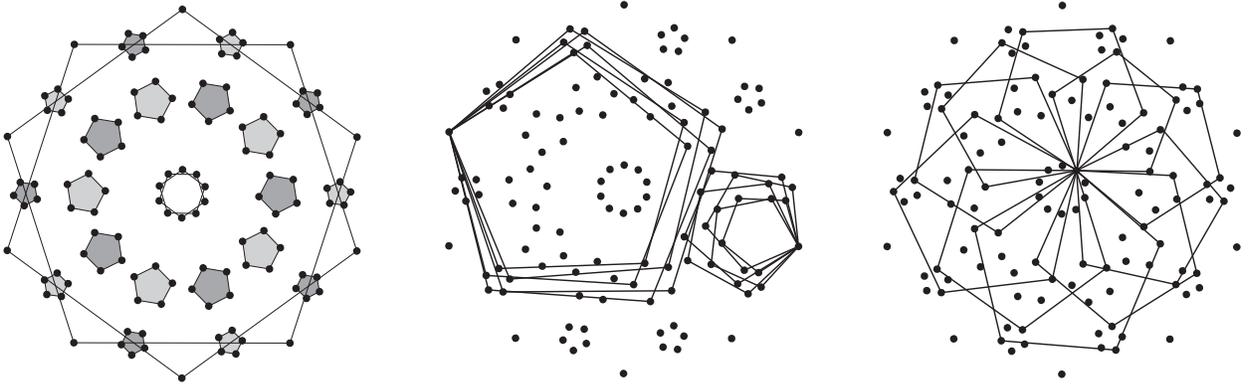}%
\caption{The best initial set $A$ for the regular pentagon}%
\label{fig: Penta}%
\end{center}
\end{figure}

\subsubsection{Subsets of regular polygons\label{sec: subsets of reg}}

If $P$ is a subset of a regular polygon $R$, then the constructions we have
previously obtained for $R$ would be also suitable for $P$. More precisely we
have the following result.

\begin{theorem}
Let $R$ be a regular polygon and $P\subseteq R$ with $|P|\geq3$. For every
nonempty finite $A\subseteq\mathbb{C}$, we have that%
\[
S_{P}(n)\geq\Omega\left(  n^{i_{R}(A)}\right)  \text{.}%
\]

\end{theorem}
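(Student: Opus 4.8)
The plan is to reduce the problem for $P$ to the already-established lower bound for the ambient regular polygon $R$, exploiting the fact that every similar copy of $R$ automatically contains a similar copy of $P$. The key observation is that if $Q$ contains a similar copy $R'$ of $R$, then $R'$ contains a subset $P'$ that is a similar copy of $P$ (namely the image under the similarity of the subset $P\subseteq R$). This gives an injection-like correspondence from copies of $R$ to copies of $P$, so $S_{P}(Q)$ is at least as large as the number of copies of $R$ in $Q$, up to the multiplicity arising from the symmetry groups.

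First I would invoke Theorem \ref{th:main} with pattern $R$ and starting set $A$: for $n$ large enough there is a point set $Q$ with $n$ points, no three collinear (the $m=3$ case), and $S_{R}(Q)\geq c\,n^{i_{R}(A)}$. Next I would argue that each similar copy of $R$ inside $Q$ yields at least one similar copy of $P$ inside $Q$. Because $P\subseteq R$, applying the orientation-preserving similarity that carries $R$ onto a copy $R'\subseteq Q$ carries $P$ onto a copy $P'\subseteq R'\subseteq Q$ with $P'\sim P$. The only subtlety is to control how many copies of $R$ can map to the same copy of $P$; since a given copy of $P$ extends to a copy of $R$ in at most a bounded number of ways (at most $|\iso^{+}(P)|/|\iso^{+}(R)|$ times a constant depending only on how $P$ sits inside $R$, and in any case a constant independent of $n$), we obtain $S_{P}(Q)\geq c'\,S_{R}(Q)$ for a constant $c'=c'(P,R)>0$.

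Combining these two steps gives $S_{P}(n)\geq S_{P}(Q)\geq c'\,S_{R}(Q)\geq c'c\,n^{i_{R}(A)}=\Omega(n^{i_{R}(A)})$, which is the desired conclusion. The main obstacle I anticipate is the bookkeeping in the second step: one must verify that distinct copies of $R$ in $Q$ do not collapse to too few copies of $P$, i.e., that the number of copies of $R$ containing a fixed copy $P'$ of $P$ is bounded by a constant depending only on $P$ and $R$ and not on $Q$ or $n$. This follows because fixing the copy $P'\sim P$ fixes (up to the finitely many symmetries) the similarity transformation, and hence fixes the copy of $R$ up to finitely many choices; the bound is therefore uniform. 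Once this uniform multiplicity bound is in hand, the inequality $S_{P}(Q)\geq c'\,S_{R}(Q)$ is immediate and the theorem follows directly from Theorem \ref{th:main} applied to the pattern $R$.
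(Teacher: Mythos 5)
Your argument is correct, but it routes the reduction through the final construction rather than through the initial set, which is a genuinely different arrangement from the paper's even though both hinge on the same geometric fact. The paper converts copies of $R$ into copies of $P$ once, inside the initial set $A$: since a copy of $P$ has at least $3$ points it determines its circumcircle, and a regular polygon inscribed in a fixed circle is determined by any single vertex, so each copy of $P$ lies in at most one copy of $R$; hence $S_{P}(A)\geq S_{R}(A)\,S_{P}(R)$, and the exact count $\left\vert \mathrm{Iso}^{+}(P)\right\vert S_{P}(R)=\left\vert R\right\vert =\left\vert \mathrm{Iso}^{+}(R)\right\vert$ turns this into the clean index inequality $i_{P}(A)\geq i_{R}(A)$, after which Theorem \ref{th:main} is applied to the pattern $P$ itself. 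You instead apply Theorem \ref{th:main} to the pattern $R$ to get $Q$ with $S_{R}(Q)\geq cn^{i_{R}(A)}$ and convert at the end via a bounded-multiplicity argument; the multiplicity you worry about is in fact at most one, by the same circumcircle observation, so $S_{P}(Q)\geq S_{P}(R)\,S_{R}(Q)\geq S_{R}(Q)$ and no constant is lost. The paper's order keeps the statement in the currency of indices and gets a tight inequality from the symmetry count; yours needs only the cruder facts that every copy of $R$ contains at least one copy of $P$ and that collisions are uniformly bounded, so it would survive even without knowing $S_{P}(R)$ exactly. One small repair: $A$ is an arbitrary finite set, so you cannot always invoke the $m=3$ case of Theorem \ref{th:main}; take $m-1$ to be the maximum number of collinear points of $A$ instead --- since the conclusion concerns the unrestricted $S_{P}(n)$, nothing is lost.
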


\begin{proof}
Let $I=|\mathrm{Iso}^{+}(P)|$. Because $R$ is a regular polygon and $|P|\geq
3$, each similar copy of $P$ in $A$ is contained in at most one copy of $R$ in
$A$. Thus $S_{P}(A)\geq S_{R}(A)\cdot S_{P}(R)$. On the other hand,
$S_{P}(R)=\left\vert R\right\vert /I$ and thus%
\[
I\cdot S_{P}(A)\geq I\cdot S_{R}(A)\cdot S_{P}(R)=\left\vert R\right\vert
S_{R}(A).
\]
Consequently%
\[
i_{P}(A)=\frac{\log\left(  I\cdot S_{P}(A)+\left\vert A\right\vert \right)
}{\log\left\vert A\right\vert }\geq\frac{\log\left(  \left\vert R\right\vert
S_{R}(A)+\left\vert A\right\vert \right)  }{\log\left\vert A\right\vert
}=i_{R}(A)\text{.}%
\]
Finally, by Theorem \ref{th:main}, $S_{P}(n)\geq\Omega\left(  n^{i_{P}%
(A)}\right)  \geq\Omega\left(  n^{i_{R}(A)}\right)  $.
\end{proof}

As a direct consequence of this theorem, we take care of the isosceles
triangles for which the construction in Section \ref{sec: isosceles} yielded
collinear triples. For the isosceles triangles $T(\alpha)$ with $\alpha=\pi/6$
or $\pi/4$ we have that $S_{T(\pi/6)}(n)\geq\Omega\left(  n^{\log528/\log
84}\right)  $ and $S_{T(\pi/4)}(n)\geq\Omega(n^{\log144/\log24})$, both of
which exceed the bound in Theorem \ref{th:initialsetstriang}. Other point sets
treated before can be improved this way as well. For instance $S_{T(\pi
/5)}(n),S_{T(2\pi/5)}(n)\geq\Omega\left(  n^{\log1440/\log120}\right)
\geq\Omega(n^{1.519})$.

\subsection{$S_{P}\left(  n,m\right)  $ for an equilateral triangle
$P$\label{sec: Equi general m}}

When $m\geq4$, the initial sets $A_{m}$ with the largest indices we know are
clusters of points of the equilateral triangle lattice in the shape of a
circular disk.
\begin{figure}
[h]
\begin{center}
\includegraphics[
height=2.7562in,
width=4.8646in
]%
{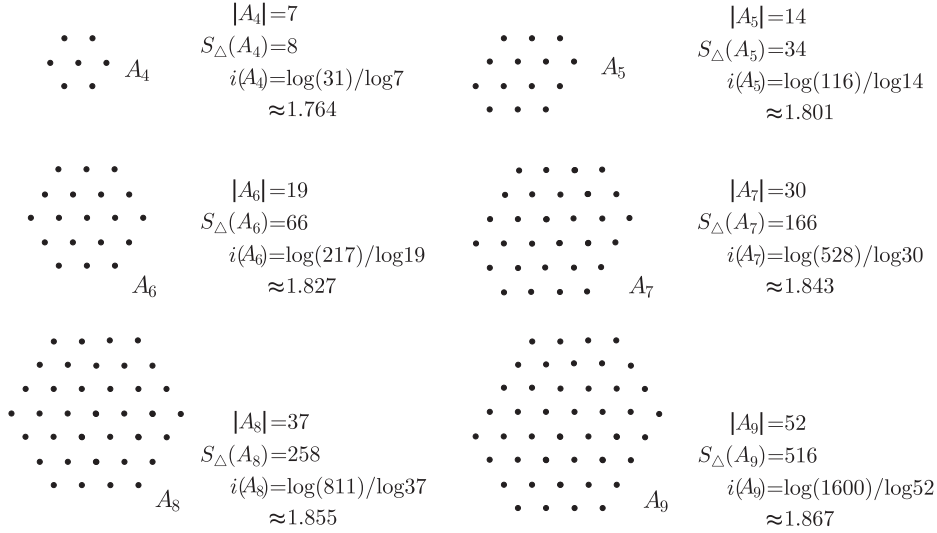}%
\caption{Best known constructions of initial sets $A_{m}$ with many
equilateral triangles and at most $m-1$ points on a line. }%
\label{Fig Hex}%
\end{center}
\end{figure}

\begin{theorem}
For $4\leq m\leq9$ and $P=\triangle$ the equilateral triangle, we have%
\[%
\begin{array}
[c]{ll}%
S_{\triangle}\left(  n,4\right)  \geq\Omega(n^{\log31/\log7})\geq\Omega\left(
n^{1.764}\right)  \hspace{0.4in} & S_{\triangle}\left(  n,5\right)  \geq
\Omega(n^{\log116/\log14})\geq\Omega\left(  n^{1.801}\right) \\
S_{\triangle}\left(  n,6\right)  \geq\Omega(n^{\log217/\log19})\geq
\Omega(n^{1.827}) & S_{\triangle}\left(  n,7\right)  \geq\Omega(n^{\log
528/\log30})\geq\Omega\left(  n^{1.843}\right) \\
S_{\triangle}\left(  n,8\right)  \geq\Omega(n^{\log811/\log37})\geq
\Omega\left(  n^{1.855}\right)  & S_{\triangle}\left(  n,9\right)  \geq
\Omega(n^{\log1600/\log52})\geq\Omega\left(  n^{1.867}\right)
\end{array}
\]
and in general if $m$ is even then
\[
S_{\triangle}\left(  n,m\right)  \geq\Omega\left(  n^{i_{\triangle}(A_{m}%
)}\right)
\]
where
\[
i_{\triangle}(A_{m})=\frac{\log(21m^{4}-84m^{3}+156m^{2}-144m+64)-\log64}%
{\log(3m^{2}-6m+4)-\log4}%
\]

\end{theorem}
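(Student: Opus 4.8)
The plan is to realize each initial set $A_m$ as a \emph{centered hexagonal cluster} of the triangular (Eisenstein) lattice $\Lambda=\mathbb{Z}[\omega]$, consisting of the center together with its first $s:=m/2-1$ concentric rings. Once such an $A_m$ is exhibited, the theorem reduces entirely---via Theorem \ref{th:main} with $A=A_m$ and $P=\triangle$---to three tasks: (i) showing $A_m$ has no $m$ collinear points; (ii) computing $|A_m|$; and (iii) counting the equilateral triangles $S_{\triangle}(A_m)$. Indeed, since $|\iso(\triangle)|=3$, substituting the two counts into the definition of the index gives exactly
\[
i_{\triangle}(A_m)=\frac{\log\bigl(3\,S_{\triangle}(A_m)+|A_m|\bigr)}{\log|A_m|},
\]
and Theorem \ref{th:main} then delivers $S_{\triangle}(n,m)\ge\Omega\bigl(n^{i_{\triangle}(A_m)}\bigr)$.

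For (ii), the centered hexagonal cluster with $s$ rings is the standard configuration of $3s^2+3s+1$ points; with $s=m/2-1$ this is $|A_m|=\tfrac14(3m^2-6m+4)$, matching the denominator $\log(3m^2-6m+4)-\log 4$ of the stated index. For (i), I would argue that among all lattice directions the three principal ones are the densest: consecutive collinear points in a principal direction are at distance $1$, whereas every other lattice direction has minimal spacing at least $\sqrt3$. Hence the richest line through the cluster is a principal diameter through the center, carrying $s$ points on each side of the center, i.e. $2s+1=m-1$ points; every other line meets $A_m$ in strictly fewer points. Thus $A_m$ has at most $m-1$ collinear points, as Theorem \ref{th:main} requires.

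Step (iii) is the crux. The key structural fact is that rotation by $60^{\circ}$, i.e. multiplication by $\zeta:=e^{i\pi/3}=1+\omega$, is a unit of $\mathbb{Z}[\omega]$ and so preserves $\Lambda$; consequently \emph{every} equilateral triangle with two vertices in $\Lambda$ has its third vertex in $\Lambda$ too. Thus equilateral triangles are parametrized by a base point $a$ and an edge vector $v\in\Lambda\setminus\{0\}$, with third vertex $a+\zeta v$, and since each counter-clockwise triangle arises from exactly its three oriented edges,
\[
S_{\triangle}(A_m)=\frac{1}{3}\sum_{v\in\Lambda\setminus\{0\}}\bigl|A_m\cap(A_m-v)\cap(A_m-\zeta v)\bigr|.
\]
Each summand is the number of lattice points in an intersection of three convex translates of the hexagon, so the difficulty is combinatorial bookkeeping: I would exploit the order-$12$ dihedral symmetry of the hexagon to restrict the sum to edge vectors $v$ in a single fundamental $30^{\circ}$ wedge, evaluate the resulting triple-intersection counts as piecewise-polynomial functions of the size of $v$, and sum. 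Carrying this out yields the quartic $S_{\triangle}(A_m)=\tfrac{1}{64}\,m(7m^3-28m^2+36m-16)$, equivalently $3\,S_{\triangle}(A_m)+|A_m|=\tfrac{1}{64}(21m^4-84m^3+156m^2-144m+64)$, which is precisely the numerator of the stated index; as a sanity check, $s=1$ produces the $6$ unit triangles and the $2$ large $\sqrt3$-triangles, giving $S_{\triangle}(A_4)=8$.

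Assembling (i)--(iii) and invoking Theorem \ref{th:main} gives the general even-$m$ bound. The hardest part is unquestionably the evaluation of the sum in (iii): the hexagon interacts with equilateral triangles of every size and orientation, so the triple-intersection counts change across several regimes of $v$ and must be tracked carefully. A useful cross-check---and possibly a cleaner route---is the classical identity that a triangular array of side $n$ contains exactly $\binom{n+2}{4}$ equilateral triangles; expressing the hexagonal cluster through inclusion--exclusion of triangular sub-arrays would reduce the entire computation to binomial coefficients. The small specific values for $4\le m\le 9$ (including the odd cases $m=5,7,9$) follow from the same construction, with the disk centered so that its widest principal chord realizes $m-1$ as the maximum number of collinear points.
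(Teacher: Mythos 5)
Your proposal is correct and follows essentially the same route as the paper: the initial sets $A_m$ for even $m$ are exactly your centered hexagonal clusters (the lattice points inside a regular hexagon of side $m/2-1$), the cardinality and collinearity computations agree, and the quartic count $S_{\triangle}(A_m)=(7m^4-28m^3+36m^2-16m)/64$ that you propose to derive by summing triple intersections over edge vectors is simply cited by the paper from an earlier reference of the first author before invoking Theorem \ref{th:main}. The only point handled differently is the sporadic range $4\le m\le 9$ (in particular the odd values $m=5,7,9$, where your hexagon does not apply): the paper settles these by exhibiting explicit disk-shaped lattice clusters in a figure, whereas you only gesture at ``the same construction,'' so those three cases would still need explicit point sets.
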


\begin{proof}
For the first part refer to Figure \ref{Fig Hex} where the sets $A_{m}$ and
their corresponding indices are shown. For the second part we consider as our
set $A_{m}$ the lattice points inside a regular hexagon of side $m/2-1$ with
sides parallel to the lattice. Clearly $A_{m}$ contains at most $m-1$
collinear points. Also $\left\vert A_{m}\right\vert =\left(  3m^{2}%
-6m+4\right)  /4$ and $S_{\triangle}(A_{m})=\left(  7m^{4}-28m^{3}%
+36m^{2}-16m\right)  /64$ (see \cite{Abr97}), therefore the result follows
from Theorem \ref{th:main}.
\end{proof}

After some elementary estimations, we get that the index on the last theorem
satisfies that%
\[
i_{\triangle}(A_{m})\geq2-\frac{\log(12/7)}{2\log m}+\Theta\left(  \log
m\right)  ^{-2}>2-\frac{0.269}{\log m}+\Theta\left(  \log m\right)
^{-2}\text{.}%
\]
This suggests that $S_{\triangle}(n,m)\geq\Omega\left(  n^{2-0.269/\log
m}\right)  $, however the constant term hidden in the $\Omega$ may depend on
$A$, and thus on $m$. We see this with more detail on the next section.

\section{When $m$ grows together with $n$\label{sec: when m grows}}

We investigate the function $S_{P}(n,m)$ when the pattern $P$ is fixed and
$m=m(n)\rightarrow\infty$ when $n\rightarrow\infty$. For instance, the maximum
number of squares in a $n$-point set without $\log n$ points on a line, is at
least $\Omega(n^{2-c\left(  \log\log n\right)  ^{-1}})$. For the proof of our
result, we use the following two theorems mentioned in the introduction as the
best bounds for the function $S_{P}(n)$ without restrictions.

\begin{theoremA}
[Elekes and Erd\H{o}s \cite{EE94}]\label{th:ElekesErdos} For any pattern $P$
there are constants $a,b,c>0$ such that
\[
S_{P}(n)\geq cn^{2-a\left(  \log n\right)  ^{-b}},
\]
moreover, if the coordinates of $P$ are algebraic or if $\left\vert
P\right\vert =3$, then $S_{P}(n)\geq cn^{2}$.
\end{theoremA}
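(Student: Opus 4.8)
The plan is to realise the extremal sets as finite boxes inside the ring generated by the shape parameters of $P$, arranged so that a positive proportion of ordered pairs of points span an edge that completes to a similar copy. Writing $P=\{p_1,\dots,p_k\}$ and, as in Lemma \ref{lem:Minkovski}, $\lambda_j=(p_j-p_1)/(p_2-p_1)$, a set $\{q_1,\dots,q_k\}$ is a copy of $P$ precisely when $q_j-q_1=\lambda_j(q_2-q_1)$ for all $j$. Hence, once a point set $Q$ is fixed, each ordered pair $(a,b)\in Q\times Q$ playing the roles of $(p_1,p_2)$ completes to a copy exactly when all the points $a+\lambda_j(b-a)$ lie in $Q$. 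Everything reduces to engineering $Q$ so that this closure holds for $\Omega(n^2)$ --- or nearly that many --- pairs $(a,b)$.

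I would first dispose of the algebraic case, which gives the clean quadratic bound. Here all $\lambda_j$ lie in a number field $K=\mathbb{Q}(\lambda_3,\dots,\lambda_k)$ of some finite degree $r$; fix the lattice $\mathcal{O}_K\subset K$ and take $Q=Q_t$ to be the image of a coordinate box of side $t$ in $\mathcal{O}_K$, so that $|Q_t|=t^r=:n$. Multiplication by each $\lambda_j$ is a fixed $\mathbb{Q}$-linear self-map of $K$ carrying a box of side $t$ into a box of side $O(t)$; after restricting to the pairs $(a,b)$ whose difference lies in a fixed sublattice $D\mathcal{O}_K$ (this clears the denominators of the $\lambda_j$ and costs only the constant factor $D^{-r}$) and to the central sub-box of side $t/C$, every completion $a+\lambda_j(b-a)$ has coordinates in range and hence lies in $Q_t$. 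This leaves $\Omega(t^{2r})=\Omega(n^2)$ admissible pairs, so $S_P(Q_t)\ge cn^2$. The case $|P|=3$ with algebraic shape is simply $r\le 2$, $K=\mathbb{Q}(z)$.

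For a general pattern the field is replaced by a truncation of the polynomial ring. Let $R_d$ be the $\mathbb{Z}$-span of the monomials in $\lambda_3,\dots,\lambda_k$ of total degree at most $d$, of rank $N_d=\binom{d+k-2}{k-2}$; the structural point is that multiplication by a $\lambda_j$ sends a monomial of degree $\le d$ to a monomial of degree $\le d+1$ with coefficient $1$, so it maps $R_d$ into $R_{d+1}$ without introducing denominators. I would take $Q$ to be the box of side $t$ in $R_{d+1}$ and count the copies whose base pair $(a,b)$ has all its degree-$\le d$ coordinates in a central sub-interval, so that the completions fall back into $Q$. Each of the $\sim d$ degree levels imposes one independent in-range constraint, contributing a factor $\gamma^{d}$ with $\gamma=\gamma(P)<1$; balancing the degree-loss (of order $1/d$) against this range-loss (of order $d/\log n$) while $t$ and $d$ trade off against $\log n=N_{d+1}\log t$ yields a defect of order $(\log n)^{-b}$, i.e. $S_P(n)\ge cn^{2-a(\log n)^{-b}}$. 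Equivalently, one may feed a finite box of index close to $2$ into the same Minkowski-sum iteration as in Theorem \ref{th:main}, now without the collinearity constraint (so that Lemma \ref{lem:tech nomcoll} is unnecessary and only the generic full-size condition $|B+C|=|B||C|$ underlying Lemma \ref{lem:Minkovski} is used), and let the index tend to $2$ with $n$.

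The genuinely delicate point --- and the step I expect to be the main obstacle --- is the quadratic bound for a triangle whose shape $z$ is transcendental. There the argument above stalls at $2-\Theta((\log n)^{-1/2})$: since $\mathbb{Z}[z]$ is not a lattice, every finite box leaks into the fresh direction $z^{d+1}$ and no multiplicatively closed finite-rank structure is available, so the range-loss cannot be avoided. Obtaining the full $\Omega(n^2)$ requires the pseudo-grid construction of Elekes and Erd\H{o}s: one builds a two-parameter family $\{x_i+y_j\}$ in which the sequences $(x_i)$ and $(y_j)$ are chosen --- rather than taken to be arithmetic progressions --- so that the single closure equation $c-a=z(b-a)$ admits $\Omega(n^2)$ solutions. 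I would expect essentially all of the difficulty of the theorem to be concentrated in producing these tuned sequences; once the ratio-ring framework is in place, the algebraic quadratic bound and the general sub-quadratic bound are comparatively mechanical.
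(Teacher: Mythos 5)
This is one of the two results the paper imports as background (Theorem~\ref{th:ElekesErdos} is stated with attribution to \cite{EE94} and never proved in the text), so there is no internal proof to compare against; I can only judge your proposal as a reconstruction of the Elekes--Erd\H{o}s argument. The first two-thirds of it are sound in outline and do follow the standard route: for algebraic $\lambda_j$, a box in (a scaled copy of) $\mathcal{O}_K$ works exactly as you say, since multiplication by $D\lambda_j$ is an integer matrix in an integral basis and the central-sub-box/sublattice restriction costs only constants, giving $\Omega(n^2)$ admissible base pairs. (One slip: for $|P|=3$ with algebraic $z$ the degree $r=[\mathbb{Q}(z):\mathbb{Q}]$ can be arbitrary, not $\le 2$; this is harmless since the argument is uniform in $r$.) The general subquadratic bound via boxes in the truncated monomial ring $R_{d+1}$ is also the right mechanism, though you should be explicit that $t$ must grow with $n$ (with $t$ bounded, the per-coordinate factor $\gamma^{N_{d+1}}$ is a \emph{polynomial} loss in $n$), and that when the $\lambda_j$ are neither all algebraic nor algebraically independent one has to run the two constructions in tandem over a transcendence basis.

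The genuine gap is the one you flag yourself: the clause ``$|P|=3$ implies $S_P(n)\ge cn^2$'' with no algebraicity hypothesis. Your framework provably cannot reach it --- for a single transcendental $z$ the ring $\mathbb{Z}[z]$ has infinite rank, every finite box leaks into the degree-$(d+1)$ direction, and your own accounting stalls at $2-\Theta((\log n)^{-1/2})$. Saying that ``one builds a two-parameter family $\{x_i+y_j\}$ with tuned sequences'' names the object (the pseudogrid of \cite{EE94}) but supplies neither the construction of the sequences nor the count of solutions of $c-a=z(b-a)$ inside it, and this is precisely where the content of the theorem lives for triangles: it is the reason Theorem~\ref{th:LaczRuzsa} can characterize $\Theta(n^2)$ by cross-ratios (a condition that is vacuous for three points) rather than by algebraicity of the shape. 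As written, the proposal proves the displayed subquadratic bound and the algebraic case, but not the full statement; to complete it you must either reproduce the Elekes--Erd\H{o}s recursive construction of the factors $\{x_i\},\{y_j\}$ or find a substitute argument that closes the equation $c=(1-z)a+zb$ quadratically often for transcendental $z$.
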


If $u,v,w,z\in\mathbb{C}$ then the \emph{cross-ratio} of the $4$-tuple
$(u,v,w,z)$ is defined as
\[
\frac{(w-u)(z-v)}{\left(  z-u\right)  \left(  w-v\right)  }.
\]

\begin{theoremA}
[Laczkovich and Ruzsa \cite{LR97}]\label{th:LaczRuzsa} $S_{P}(n)=\Theta
(n^{2})$ if and only if the cross-ratio of every $4$-tuple in $P$ is algebraic.
\end{theoremA}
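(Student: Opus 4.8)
The plan is to prove both implications after two normalizations. Since $S_P$ and all cross-ratios are invariant under orientation-preserving similarities, I first normalize $p_1=0$, $p_2=1$, so that $p_j=\lambda_j$ with $\lambda_j=(p_j-p_1)/(p_2-p_1)$, and a similar copy of $P$ in a set $Q$ is exactly a pair $(u,v)$ with $u\neq0$ and $u\lambda_j+v\in Q$ for all $j$. Because $I\cdot S_P(Q)\le|Q|^2-|Q|$ always holds, the content of ``$\Theta(n^2)$'' is only the lower bound $S_P(n)\ge cn^2$. Second, I record the algebraic meaning of the hypothesis: the cross-ratio of $(p_1,p_2,p_i,p_j)$ equals $\nu_i/\nu_j$, where $\nu_j:=\lambda_j/(\lambda_j-1)$, and the M\"obius group is sharply $3$-transitive, so the following are equivalent: (i) every cross-ratio of $P$ is algebraic; (ii) $P$ is the image of a set with algebraic coordinates under a single M\"obius transformation; (iii) all the $\nu_j$ ($j\ge3$) are algebraically proportional, i.e.\ there is a $\tau\in\mathbb{C}$ and algebraic numbers $\alpha_j$ with $\nu_j=\alpha_j\tau$. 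These equivalences are routine, and I would establish them first.

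For the ``if'' direction I would construct, for each large $n$, a set $Q$ of $n$ points with $\ge cn^2$ completing pairs. When $\tau$ is algebraic all $\lambda_j$ are algebraic, and this is precisely the algebraic-coordinate case of Elekes--Erd\H{o}s (Theorem \ref{th:ElekesErdos}): one takes $Q$ to be a box $\{\sum_i x_i\omega_i:0\le x_i<M\}$ over a $\mathbb{Q}$-basis $\omega_i$ of the number field $\mathbb{Q}(\lambda_3,\dots,\lambda_k)$ and checks that a positive proportion of pairs keep all $k$ images inside a slightly enlarged box. When $\tau$ is transcendental the same box fails, because multiplying by a $\lambda_j$ raises the $\tau$-degree and the denominators, so the transcendental coordinates must be handled through the extra structure in (iii). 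The idea is to pass to the M\"obius coordinate $\nu=\phi(z)$ in which $P$ becomes $\tau\cdot\{\alpha_j\}$, a mere scaling of an algebraic set, to build a multiplicatively structured algebraic set there carrying many scaled copies of $\{\alpha_j\}$, and to transport it back. Reconciling this M\"obius picture with the genuinely \emph{similarity}-based counting of $S_P$ (conjugating an affine similarity by $\phi$ produces only a M\"obius map fixing $\phi(\infty)$, not a similarity) is the delicate step of this direction.

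For the ``only if'' direction I would argue by contraposition: assuming some cross-ratio $\chi$ of $P$ is transcendental, I show $S_P(n)=o(n^2)$. Suppose some $n$-set $Q$ had $\ge cn^2$ copies. Each copy places four vertices $y_1,y_2,y_3,y_4\in Q$ of fixed similarity shape and cross-ratio exactly $\chi$; since $\ge cn^2$ pairs $(y_1,y_2)$ complete, a Balog--Szemer\'edi--Gowers/Freiman analysis (or the structural Theorem \ref{th:Abregoetal}, which already forces long arithmetic progressions) yields a bounded-dimension generalized arithmetic progression $\Lambda=x_0+\sum_{i=1}^{r}\mathbb{Z}g_i$ with $r=O(1)$ containing $\gg n$ points of $Q$ and $\gg n^2$ of the copies. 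Inside $\Lambda$ every vertex is a $\mathbb{Z}$-combination of the fixed generators $g_1,\dots,g_r$, so each copy expresses $\chi$ as a ratio $F_s(g)/G_s(g)$ of two $\mathbb{Z}$-bilinear forms in the $g_i$; equating the expressions from two copies yields an algebraic relation over $\mathbb{Q}$ among $g_1,\dots,g_r$ that no longer mentions $\chi$. The crux, and the main obstacle, is the transcendence-degree bookkeeping: I must show that the $\gg n^2$ relations produced this way force $\chi=F_s(g)/G_s(g)$ to be algebraic --- intuitively, with only $O(1)$ generators the many shape coincidences cannot sustain a transcendental invariant --- contradicting the transcendence of $\chi$. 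Guaranteeing that these relations are genuinely independent and that they pin the transcendence contribution down is where the real difficulty lies, and it is also the step that most essentially exploits the quantitative hypothesis $S_P(n)\ge cn^2$ rather than mere positivity.
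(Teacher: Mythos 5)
First, note that the paper offers no proof of this statement: it is quoted with attribution from Laczkovich and Ruzsa \cite{LR97} and used as a black box, so there is no internal argument to compare yours against; your attempt must stand on its own. Judged that way, it is a roadmap rather than a proof, and both implications stop exactly at their decisive steps. In the ``if'' direction, the only genuinely new case is the one you isolate in (iii): $\nu_j=\alpha_j\tau$ with $\tau$ transcendental, in which case $\lambda_j=\alpha_j\tau/(\alpha_j\tau-1)$ is itself transcendental (such patterns exist, e.g.\ $\nu_j\in\{\tau,2\tau,3\tau\}$, and one checks all their cross-ratios are algebraic), so the Elekes--Erd\H{o}s box over a number field is unavailable. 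Your plan is to pass to the M\"obius coordinate $\nu=\phi(z)$, build a multiplicatively structured set there, and transport it back; but, as you yourself observe, $S_P$ counts \emph{similar} copies and similarity is not preserved under conjugation by $\phi$, so a family of scaled copies of $\{\alpha_j\}$ in the $\nu$-plane does not pull back to similar copies of $P$. Until that is resolved you have no construction at all for these patterns, and this is the heart of the lower bound.

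The ``only if'' direction has the same character. The reduction to a bounded-rank generalized arithmetic progression is asserted, not supplied: Theorem \ref{th:Abregoetal} cannot do this job (it yields, for each fixed $m$, an $m$-term arithmetic progression, not a rank-$O(1)$ progression containing $\gg n$ points of $Q$ and supporting $\gg n^{2}$ of the copies; it is also a later structural result that one should not presuppose when proving Theorem \ref{th:LaczRuzsa}), and passing from ``$cn^{2}$ similar copies'' to the additive energy needed for a Balog--Szemer\'edi--Gowers/Freiman application requires an argument of its own, since the similarity ratio varies from copy to copy and the differences $y_{2}-y_{1}$ are a priori unconstrained. Finally, the ``transcendence-degree bookkeeping'' --- showing that $\gg n^{2}$ coincidences among ratios of $\mathbb{Z}$-bilinear forms in $O(1)$ generators force the cross-ratio $\chi$ to be algebraic --- is precisely the content of the theorem in this direction, and you explicitly leave it as an open obstacle. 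Both implications therefore contain genuine gaps: the architecture you describe is reasonable and close in spirit to the Laczkovich--Ruzsa argument, but neither the transcendental-case construction nor the structural converse is actually carried out.
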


For the sake of clarity, let us call a pattern $P$ \emph{cross-algebraic} if
the cross-ratio of every $4$-tuple is algebraic, and
\emph{cross-transcendental} otherwise.

\begin{theorem}
\label{th: m grows}Let $P$ be an arbitrary pattern and suppose
$m=m(n)\rightarrow\infty$, then for every $\varepsilon>0$ there is a threshold
function $N_{0}=N_{0}(\varepsilon,P)$ such that
\[
S_{P}(n,m)\geq n^{2-\varepsilon}\text{ for every }n\geq N_{0}\text{.}%
\]

\end{theorem}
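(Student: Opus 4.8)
The plan is to reduce the theorem to a single application of Theorem~\ref{th:main}, fed with a cleverly chosen \emph{fixed} seed set $A$ of very high index, and to exploit the hypothesis $m=m(n)\to\infty$ only through the observation that a fixed finite seed has a fixed finite bound on its number of collinear points, which $m(n)$ eventually surpasses. Concretely, I would first fix $\varepsilon>0$ and produce a finite set $A$ with $i_P(A)\ge 2-\varepsilon/2$; then, since $A$ and $P$ are finite, I let $\mu$ be the largest number of collinear points occurring in either of them, so that neither has $\mu+1$ points on a line. Applying Theorem~\ref{th:main} with $m=\mu+1$ yields a constant $c=c(P,A)$ and a threshold so that $S_P(n,\mu+1)\ge c\,n^{i_P(A)}$ for all large $n$. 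Because $S_P(n,\cdot)$ is nondecreasing in its second argument and $m(n)\ge \mu+1$ for all $n$ beyond some $N_1$ (this is exactly where $m(n)\to\infty$ enters), I obtain $S_P(n,m(n))\ge S_P(n,\mu+1)\ge c\,n^{i_P(A)}\ge c\,n^{2-\varepsilon/2}$, which exceeds $n^{2-\varepsilon}$ as soon as $c\,n^{\varepsilon/2}\ge 1$. Taking $N_0$ to be the maximum of the three resulting thresholds finishes the argument.

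The remaining task is the construction of the seed $A$ with $i_P(A)\ge 2-\varepsilon/2$, and here I would invoke Theorem~\ref{th:ElekesErdos}. For every integer $N$ it guarantees an $N$-point set $A_N$ with $S_P(A_N)\ge c_0 N^{2-a(\log N)^{-b}}$, whence
\[
i_P(A_N)=\frac{\log\!\left(I\,S_P(A_N)+N\right)}{\log N}\ge 2-a(\log N)^{-b}+\frac{\log(Ic_0)}{\log N}\xrightarrow[N\to\infty]{}2 .
\]
Thus for $N$ large enough (depending only on $\varepsilon$ and $P$) the set $A:=A_N$ satisfies $i_P(A)\ge 2-\varepsilon/2$, and this is the seed used above. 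I would remark that when $P$ is cross-algebraic one may instead draw the seed from the extremal $\Theta(n^2)$ configurations furnished by Theorem~\ref{th:LaczRuzsa}, giving $i_P(A_N)=2-\Theta(1/\log N)$ directly; but Theorem~\ref{th:ElekesErdos} already covers \emph{every} pattern, cross-transcendental ones included, so no case split is strictly necessary for the stated bound.

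The step I expect to be the crux is the uniformity of the constant and threshold in Theorem~\ref{th:main} with respect to $m$. The key point is that the constant produced there, $c=(2I|A|^{i_P(A)})^{-1}$, and the threshold condition $n^{i_P(A)-1}\ge 2|A|^{i_P(A)}$, depend only on $A$ and $P$ and \emph{not} on $m$; the parameter $m$ enters the construction solely as the forbidden collinearity, and enlarging it merely relaxes the genericity constraints in Lemma~\ref{lem:tech nomcoll}. This is precisely what allows one fixed seed to serve simultaneously for all sufficiently large $n$, even though $m=m(n)$ varies with $n$. If one instead wished to track the \emph{rate} of convergence (as in the quantitative remark that $\log n$ collinear points suffice for squares), the obstacle would shift to bounding the collinearity $\mu=\mu(N)$ of the seed as a function of its size $N$ and balancing it against the growth of $m$; but for the qualitative statement $S_P(n,m)\ge n^{2-\varepsilon}$ this balancing is unnecessary, since $\varepsilon$ is fixed before the seed, making $\mu$ an absolute constant.
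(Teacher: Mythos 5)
Your proof is correct, but it takes a genuinely different and more economical route than the paper. You fix $\varepsilon$ first and then choose a single seed $A$ (an extremal set from Theorem~\ref{th:ElekesErdos}) with $i_P(A)\ge 2-\varepsilon/2$; since $A$ and $P$ are finite, their maximal collinearity $\mu$ is a constant, and the hypothesis $m(n)\to\infty$ is used only to guarantee $m(n)\ge\mu+1$ eventually, after which monotonicity of $S_P(n,\cdot)$ and Theorem~\ref{th:main} (whose constant $c=(2I|A|^{i_P(A)})^{-1}$ and threshold indeed depend only on $A$ and $P$, not on $m$) finish the job. The paper instead couples the seed to $m$: it takes a seed of size $\lceil m\rceil-1$, which automatically has no $m$ collinear points, and feeds it into inequality (\ref{eq: ineqmain}); this forces a case split (cross-algebraic via Theorem~\ref{th:LaczRuzsa} versus cross-transcendental via Theorem~\ref{th:ElekesErdos}, plus the regime $\log m>\sqrt{\log n}$) but yields the stronger quantitative bounds $S_P(n,m)\ge\Omega\left(n^{2-c_1/\log m}\right)$ and $\Omega\left(n^{2-c_2/(\log m)^{c_3}-c_1/\log m}\right)$ with constants independent of $m$, which is what underlies the paper's remark that $\log n$ collinear points already give near-quadratically many squares. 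Your argument cannot recover these rates --- as you yourself observe, the fixed seed ties $\mu$ to $\varepsilon$ rather than to $m$ --- but for the qualitative statement as literally stated it is simpler and avoids the case analysis entirely. One cosmetic point affecting both arguments equally: the threshold inevitably depends on the function $m(\cdot)$ as well as on $\varepsilon$ and $P$, since one must wait either for $m(n)$ to clear the constant $\mu+1$ (your version) or for $c_1/\log m(n)$ to drop below $\varepsilon$ (the paper's version).
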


\begin{proof}
Suppose $m=m(n)\rightarrow\infty$. We actually prove the following stronger result.

\begin{itemize}
\item[(i)] If $\log\left(  m\right)  \leq\sqrt{\log n}$ and $P$ is
cross-algebraic, then there is a constant $c_{1}>0$ depending only on $P$ such
that
\[
S_{P}(n,m)\geq\Omega\left(  n^{2-c_{1}/\log m}\right)  .
\]

\item[(ii)] If $\log\left(  m\right)  \leq\sqrt{\log n}$ and $P$ is
cross-transcendental, then there are constants $c_{1},c_{2},c_{3}>0$ depending
only on $P$ such that
\[
S_{P}(n,m)\geq\Omega\left(  n^{2-c_{2}/\left(  \log m\right)  ^{c_{3}}%
-c_{1}/\log m}\right)  .
\]

\item[(iii)] If $\log\left(  m\right)  >\sqrt{\log n}$, then $S_{P}(n,m)\geq
S_{P}(n,e^{\sqrt{\log n}})$ and thus either (i) or (ii) holds with $\log
m=\sqrt{\log n}$.
\end{itemize}

We first prove (i). Suppose that $P$ is cross-algebraic. By Theorem
\ref{th:LaczRuzsa} there is a constant $c$, depending only on $P$, and a
$(\left\lceil m\right\rceil -1)$-set $A$ such that $\left\vert A\right\vert
+S_{P}(A)\geq cm^{2}$. Clearly $A$ does not have $m$ points on a line. Then,
by (\ref{eq: ineqmain}) in the proof of Theorem \ref{th:main}, we have that%
\[
S_{P}(n,m)\geq\frac{1}{I}\left(  \left(  \frac{n}{\left\vert A\right\vert
}\right)  ^{i_{P}(A)}-n\right)  >\frac{1}{I}\left(  \left(  \frac{n}%
{m}\right)  ^{\frac{\log\left(  cm^{2}\right)  }{\log m}}-n\right)  =\frac
{1}{Ic}\left(  n^{2+\frac{\log c}{\log m}-\frac{2\log m}{\log n}}-cn\right)
\text{.}%
\]
By assumption, $\left(  2\log m\right)  /\log n\leq2/\log m$. Since $c<1$ it
follows that $\log c<0$. Let $c_{1}=2-\log c>0$, then
\[
S_{P}(n,m)\geq\frac{1}{Ic}\left(  n^{2+\frac{\log c}{\log m}-\frac{2}{\log m}%
}-n\right)  \geq\frac{1}{Ic}\left(  n^{2-c_{1}/\log m}-n\right)  .
\]
That is, $S_{P}(n,m)\geq\Omega\left(  n^{2-c_{1}/\log m}\right)  $, where the
constant in the $\Omega$ term does not depend on $n$ or $m$.

Similarly, to prove (ii), assume $P$ is cross-transcendental, then by Theorem
\ref{th:ElekesErdos} there are constants $c,c_{2},c_{3}>0$, depending only on
$P$, such that $S_{P}(n)\geq cn^{2-c_{2}/\left(  \log n\right)  ^{c_{3}}}$.
Then there is a $(\left\lceil m\right\rceil -1)$-set $A$ such that $\left\vert
A\right\vert +S_{P}(A)\geq cm^{2-c_{2}/\left(  \log m\right)  ^{c_{3}}}$.
Again $A$ does not have $m$ points on a line and setting $c_{1}=2-\log c$ we
get%
\begin{align*}
S_{P}(n,m) &  \geq\frac{1}{I}\left(  \left(  \frac{n}{\left\vert A\right\vert
}\right)  ^{i_{P}(A)}-n\right)  \geq\frac{1}{I}\left(  \left(  \frac{n}%
{m}\right)  ^{2+\frac{\log c}{\log m}-\frac{c_{2}}{\left(  \log m\right)
^{c_{3}}}}-n\right)  \\
&  \geq\frac{1}{Ic}\left(  n^{2-c_{2}/\left(  \log m\right)  ^{c_{3}}%
-c_{1}/\log m}-cn\right)
\end{align*}
for $n$ and $m$ large enough depending only on $P$. That is, $S_{P}%
(n,m)\geq\Omega(n^{2-c_{2}/\left(  \log m\right)  ^{c_{3}}-c_{1}/\log m})$,
where the constant in the $\Omega$ term does not depend on $n$ or $m$.
\end{proof}

If $m$ grows like a fixed power of $n$ and $P$ is cross-algebraic (i.e.,
$S_{P}(n)=\Theta(n^{2})$), then we can improve our bound.

\begin{theorem}
If $P$ is cross-algebraic and $m=m(n)\geq n^{\alpha}$ for some fixed
$0<\alpha<1$, then there is $c_{1}=c_{1}(P,\alpha)>0$ such that
\[
S_{P}(n,m)\geq c_{1}n^{2}\text{ for every }n\geq\left\vert P\right\vert .
\]

\end{theorem}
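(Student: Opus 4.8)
The plan is to exploit that $m\geq n^{\alpha}$ is \emph{polynomially} large, so that $n$ can be written (up to a constant factor) as a product of a \emph{bounded} number of integers, each at most $m-1$. Multiplicativity of the copy count under generic Minkovski sums (Lemma \ref{lem:Minkovski}) then converts good factors into a good $n$-set, and since $P$ is cross-algebraic, Theorem \ref{th:LaczRuzsa} supplies factors carrying $\Theta(s^{2})$ copies of $P$. The crucial gain over Theorem \ref{th: m grows} is that the number of factors stays constant, so the accumulated constant never degrades.

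First I would fix $J=\lceil 1/\alpha\rceil$ and set $s=\lfloor n^{1/J}\rfloor$. Since $1/J\leq\alpha$ we have $s\leq n^{1/J}\leq n^{\alpha}\leq m$; after trimming to $s\leq m-1$ if equality occurs, a set of $s$ points has fewer than $m$ points in total and hence \emph{trivially} no $m$ collinear points, which is why the factor size must be capped at $m-1$. Because $P$ is cross-algebraic, Theorem \ref{th:LaczRuzsa} yields a constant $c=c(P)>0$ and, once $n$ is large enough that $s$ exceeds the threshold in that theorem, a set $A$ of size $s$ with $S_{P}(A)\geq cs^{2}$, so that $I\cdot S_{P}(A)+|A|\geq Ic\,s^{2}$.

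Next I would form the iterated generic Minkovski sum $A^{\ast}=A^{(1)}+v_{2}A^{(2)}+\cdots+v_{J}A^{(J)}$ of $J$ similar copies of $A$. Applying Lemma \ref{lem:tech nomcoll} inductively (each new scaled summand $v_{i}A^{(i)}$ against the accumulated sum), for almost all scalars $v_{i}$ the sum has exactly $s^{J}$ points and no $m$ points on a line; iterating Lemma \ref{lem:Minkovski} then gives
\[
I\cdot S_{P}(A^{\ast})+s^{J}\geq\bigl(I\cdot S_{P}(A)+s\bigr)^{J}\geq (Ic)^{J}s^{2J}=(Ic)^{J}\bigl(s^{J}\bigr)^{2}.
\]
Since $s^{J}=\lfloor n^{1/J}\rfloor^{J}\geq(n^{1/J}-1)^{J}\geq n/2$ for $n$ large while $s^{J}\leq n$, the set $A^{\ast}$ has $N:=s^{J}=\Theta(n)$ points, at most $m-1$ on a line, and $S_{P}(A^{\ast})\geq\tfrac{1}{I}\bigl((Ic)^{J}N^{2}-N\bigr)$.

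Finally, because $J=\lceil 1/\alpha\rceil$ depends only on $\alpha$, the quantity $(Ic)^{J}$ is a positive constant $c_{0}=c_{0}(P,\alpha)$. Using monotonicity of $S_{P}$ under adding points (pad $A^{\ast}$ with $n-N$ generic points while keeping no $m$ collinear) I would conclude $S_{P}(n,m)\geq S_{P}(A^{\ast})\geq c_{0}'\,n^{2}$ for all $n$ beyond some threshold $N_{0}=N_{0}(P,\alpha)$; for the finitely many $|P|\leq n\leq N_{0}$ I would invoke $S_{P}(n,m)\geq 1$ (the copy $P$ itself, valid since $P$ has at most $m-1$ collinear points) and take $c_{1}=\min\{c_{0}',\,1/N_{0}^{2}\}$. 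The main obstacle is precisely the interplay hidden in the choice of $J$: for subpolynomial $m$ one would need $\approx\log n/\log m\to\infty$ factors and the constant $(Ic)^{J}$ would collapse (giving only $n^{2-\varepsilon}$), whereas $m\geq n^{\alpha}$ caps the factor count at $\lceil 1/\alpha\rceil$; the remaining technical care is to keep every factor inside the collinearity budget $s\leq m-1$ while the product still reaches $\Theta(n)$ points, and to verify through Lemma \ref{lem:tech nomcoll} that the generic sums retain both full cardinality and the no-$m$-collinear property.
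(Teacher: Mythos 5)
Your proposal is correct and follows essentially the same route as the paper: pick a bounded number of factors $j\approx\lceil 1/\alpha\rceil$, use Theorem~\ref{th:LaczRuzsa} to get a set of size about $n^{1/j}<m$ carrying $\Theta(n^{2/j})$ copies, and iterate the generic Minkovski sum $j$ times via Lemmas~\ref{lem:tech nomcoll} and~\ref{lem:Minkovski} so the constant $(Ic)^{j}$ stays fixed. The only cosmetic differences are that the paper invokes inequality~(\ref{eq: iteration}) directly rather than re-deriving the iteration, and avoids your boundary case $1/J=\alpha$ by choosing $j$ with $\alpha>1/j$ strictly.
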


\begin{proof}
Choose an integer $j\geq2$ such that $\alpha>1/j$. Consider an optimal set $A$
for the function $S_{P}(\left\lfloor n^{1/j}\right\rfloor $. Then $\left\vert
A\right\vert =\left\lfloor n^{1/j}\right\rfloor $ and by Theorem
\ref{th:LaczRuzsa}, there is a constant $c=c(P)$ such that $S_{P}%
(A)=S_{P}(\left\vert A\right\vert )\geq c\left\vert A\right\vert ^{2}$. Since
$\left\vert A\right\vert \leq n^{1/j}<n^{\alpha}\leq m,$ then $A$ has no $m$
collinear points. By Identity (\ref{eq: iteration}) in Theorem \ref{th:main},
\begin{align*}
S_{P}(n,m) &  \geq S_{P}(\left\vert A\right\vert ^{j},m)\geq\frac{1}{I}\left(
\left(  I\cdot S_{P}(A)+\left\vert A\right\vert \right)  ^{j}-\left\vert
A\right\vert ^{j}\right)  \\
&  \geq I^{j-1}S_{P}(A)^{j}\geq c^{j}I^{j-1}\left\vert A\right\vert
^{2j}\text{.}%
\end{align*}
Now, if $n\geq\left\vert P\right\vert $ then $\left\vert A\right\vert \geq
n^{1/j}-1\geq\left(  1-\left\vert P\right\vert ^{-1/j}\right)  n^{1/j}$. By
letting $c_{1}=c^{j}I^{j-1}\left(  1-\left\vert P\right\vert ^{-1/j}\right)
^{2j}$ we get%
\[
S_{P}(n,m)\geq c_{1}n^{2}\text{.}\qedhere
\]

\end{proof}

\section{\label{sec:noparall}Parallelogram-free sets}

We consider the restriction of the function $S_{P}(n)$ to sets of points $A$
in general position (no 3 points on a line) and without parallelograms. We say
that such a set $A$ is \emph{parallelogram-free}. This immediately prohibits
the use of Minkovski Sums to obtain good constructions. More precisely, for a
parallelogram-free pattern $P$, define%
\[
S_{P}^{\nparallel}(n)=\max\left\{  S_{P}(A):\left\vert A\right\vert =n\text{
and }A\text{ is parallelogram-free}\right\}  \text{.}%
\]

We obtain the following upper bound on $S_{P}^{\nparallel}(n)$.
\begin{figure}
[h]
\begin{center}
\includegraphics[
height=3.0407in,
width=3.5613in
]%
{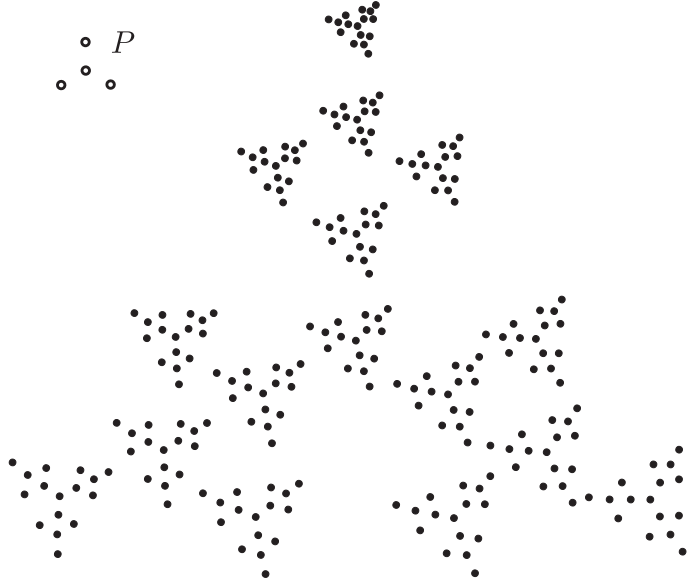}%
\caption{A parallelogram-free point set with $n$ points and $cn\log n$ similar
copies of $P$.}%
\end{center}
\end{figure}

\begin{theorem}
Let $P$ be a parallelogram-free pattern with $\left\vert P\right\vert \geq3$.
Then for all $n$,
\[
S_{P}^{\nparallel}(n)\leq n^{3/2}+n\text{.}%
\]

\end{theorem}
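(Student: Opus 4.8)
The plan is to reduce the problem to counting similar copies of a single non-degenerate triangle and then exploit a Sidon-type property forced by the parallelogram-free hypothesis. First I would record the crucial structural fact: if $A$ has no three collinear points and no parallelogram, then the map sending an ordered pair of distinct points $(a,b)\in A\times A$ to its difference $a-b$ is injective. Indeed, if $a-b=c-d$ with $(a,b)\neq(c,d)$ and all four points distinct, then $a,b,c,d$ span a parallelogram; and if two of the four coincide, the equation $a-b=c-d$ forces a third point to be the midpoint of two others, producing three collinear points. Both are excluded, so every nonzero vector arises as a difference of at most one ordered pair of points of $A$.

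Next I would reduce to triangles. If all points of $P$ are collinear then $S_P(A)=0$ and there is nothing to prove, so fix three non-collinear points $p_1,p_2,p_3$ of $P$ and set $\lambda=(p_3-p_1)/(p_2-p_1)\notin\mathbb{R}$ (in particular $\lambda\neq 0,1$). By the similarity criterion established in Section~\ref{sec: Main}, every similar copy of $P$ in $A$ restricts, via its defining similarity, to an ordered triple $(a_1,a_2,a_3)\in A^3$ with $a_3-a_1=\lambda(a_2-a_1)$; running over all copies and all $|\iso(P)|$ similarities realizing each one produces $|\iso(P)|\cdot S_P(A)$ distinct such triples. Hence $S_P(A)\le N_3$, where $N_3$ denotes the number of ordered triples of this fixed shape, and it suffices to prove $N_3\le n^{3/2}+n$.

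Then I would count $N_3$ by its apex. Writing $t(a)=|\{a_2\in A: a+\lambda(a_2-a)\in A\}|$, we have $N_3=\sum_{a\in A}t(a)$, and Cauchy--Schwarz gives $N_3^2\le n\sum_{a\in A}t(a)^2$. The sum $\sum_a t(a)^2$ counts ordered pairs of copies $(a_1,a_2,a_3)$ and $(a_1,a_2',a_3')$ sharing the apex $a_1$. The diagonal $a_2=a_2'$ (which forces $a_3=a_3'$) contributes exactly $N_3$. For the off-diagonal part I would invoke the Sidon property: subtracting the two shape equations gives $a_3-a_3'=\lambda(a_2-a_2')$, so the nonzero vector $s=a_2-a_2'$ determines the ordered pair $(a_2,a_2')$, the vector $\lambda s$ determines $(a_3,a_3')$, and the apex is recovered as $a_1=(a_3-\lambda a_2)/(1-\lambda)$. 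Thus the entire configuration is determined by $s$, so the off-diagonal count is at most the number $n(n-1)$ of nonzero differences. Combining, $\sum_a t(a)^2\le N_3+n(n-1)$, whence $N_3^2\le nN_3+n^3$ and $N_3\le\tfrac12\bigl(n+\sqrt{n^2+4n^3}\bigr)\le n^{3/2}+n$.

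The main obstacle, and the heart of the argument, is precisely this off-diagonal bound: one must see that the no-parallelogram condition upgrades the naive $O(n^2)$ estimate for apex-sharing pairs of copies into a family of configurations indexed injectively by a single difference vector, which is exactly what yields the $n^{1/2}$ gain after Cauchy--Schwarz. The only remaining points needing care are that $\lambda\notin\mathbb{R}$ (so that $1-\lambda\neq 0$ and the apex is well defined) and that the passage from $P$ to the triangle costs only the harmless factor $|\iso(P)|\ge 1$, already absorbed in the inequality $S_P(A)\le N_3$.
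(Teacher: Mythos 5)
Your proof is correct and reaches the paper's exact bound $n^{3/2}+n$, but by a genuinely different route. The paper forms a bipartite graph on ordered base pairs $(a_1,a_2)$, joining them when some apex completes a triangle similar to $\triangle p_1p_2p_3$, shows the graph is $K_{2,2}$-free because the four apex points of a $K_{2,2}$ would satisfy $a_{13}-a_{14}=a_{23}-a_{24}$ and hence span a parallelogram, and then quotes the K\H{o}vari--S\'{o}s--Tur\'{a}n bound $(n-1)n^{1/2}+n$. You instead extract a Sidon-type property of the difference set (every nonzero vector is the difference of at most one ordered pair, the degenerate coincidence cases being killed by the no-three-collinear half of parallelogram-freeness) and run the second-moment computation by hand, summing over the apex and bounding the off-diagonal terms by the injectivity of the map from apex-sharing configurations to the single vector $s=a_2-a_2'$. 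The two arguments apply the parallelogram hypothesis to different quadruples --- the paper to the four apexes, you to pairs such as $\{a_2,a_2'\}$ and $\{a_3,a_3'\}$ via the Sidon lemma --- and yours is self-contained where the paper's imports an extremal graph theorem; your explicit treatment of the degenerate coincidences is also a point the paper's $K_{2,2}$ argument passes over silently (e.g.\ the case $a_{13}=a_{24}$, where the ``parallelogram'' collapses to three collinear points). Two details worth one sentence each in a final write-up: your $t(a)$ as defined also counts the degenerate triple $a_2=a$, which only enlarges $N_3$ and is therefore harmless; and one should record that $a_3\neq a_3'$ (because $\lambda s\neq 0$) before invoking the Sidon property for that pair. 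With those remarks, the chain $N_3^2\leq n\left(N_3+n(n-1)\right)$, hence $N_3\leq\frac{1}{2}\left(n+\sqrt{n^2+4n^3}\right)\leq n^{3/2}+n$, is complete and correct.
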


\begin{proof}
Suppose $A$ is an $n$-set in the plane in general position and with no
parallelograms. Let $p_{1},p_{2},p_{3}$ be three points in $P$. Consider the
following bipartite graph $B$. The vertex bipartition is $(A,A)$; the edges
are the pairs $(a_{1},a_{2})\in A\times A$ , $a_{1}\neq a_{2}$ such that there
is a point $a_{3}\in A$ with $\triangle a_{1}a_{2}a_{3}\sim\triangle
p_{1}p_{2}p_{3}$. Every similar copy of $P$ in $A$ has at least one edge
$(a_{1},a_{2})$ associated to it. Thus the number of edges $E$ in our graph
satisfies that $E\geq S_{P}(A)$. By a theorem of K\H{o}vari et al. \cite{KST}
(also referred in the literature as Zarankiewicz problem \cite{Z}), it is
known that a bipartite graph with $n$ vertices on each class and without
subgraphs isomorphic to $K_{2,2}$ contains at most $(n-1)n^{1/2}+n$ edges. To
finish our proof we now show that $B$ has no subgraphs isomorphic to $K_{2,2}%
$. Suppose by contradiction that $(a_{1},a_{3}),(a_{1},a_{4}),(a_{2}%
,a_{3}),(a_{2},a_{4})$ are edges in $B$. Let $\lambda=(p_{3}-p_{1}%
)/(p_{2}-p_{1})$. By definition, there are points $a_{13},a_{14},a_{23}%
,a_{24}\in A$ such that $\triangle p_{1}p_{2}p_{3}\sim\triangle a_{1}%
a_{3}a_{13}\sim\triangle a_{1}a_{4}a_{14}\sim\triangle a_{2}a_{3}a_{23}%
\sim\triangle a_{2}a_{4}a_{24}$. Thus $a_{13}=$ $a_{1}+\lambda(a_{3}-a_{1})$,
$a_{14}=$ $a_{1}+\lambda(a_{4}-a_{1})$, $a_{23}=$ $a_{2}+\lambda(a_{3}-a_{2}%
)$, and $a_{24}=$ $a_{2}+\lambda(a_{4}-a_{2})$. Then $a_{13}-a_{14}%
=a_{23}-a_{24}=\lambda(a_{3}-a_{4}),$ which means that $a_{13}a_{14}%
a_{24}a_{23}$ is a parallelogram. This contradicts the parallelogram-free
assumption on $A$.
\end{proof}

We make no attempt to optimize the coefficient of the $n^{3/2}$ term, since we
do not believe that $n^{3/2}$ is the right order of magnitude.

\begin{theorem}
\label{th: para-free sets} Let $P$ be a parallelogram-free pattern with
$\left\vert P\right\vert \geq3$. Then there is a constant $c=c(P)$ such that
for $n\geq\left\vert P\right\vert $,%
\[
S_{P}^{\nparallel}(n)\geq cn\log n\text{.}%
\]

\end{theorem}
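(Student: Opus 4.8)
The plan is to give a recursive, self-similar construction that, unlike the Minkovski sum, attaches scaled copies of a set through a common center, so that no parallelograms are created while each step still adds linearly many new copies of $P$. Write $P=\{p_{1},\ldots,p_{k}\}$ and set $\lambda_{j}=(p_{j}-p_{1})/(p_{2}-p_{1})$, so $\lambda_{1}=0$ and $\lambda_{2}=1$. The key observation is that if $c$ is any point and, for $3\le j\le k$, we put
\[
\phi_{j}(z)=(1-\lambda_{j})z+\lambda_{j}c,
\]
then each $\phi_{j}$ is an orientation-preserving similarity fixing $c$, and for \emph{every} point $a$ the $k$-tuple $\{a,c,\phi_{3}(a),\ldots,\phi_{k}(a)\}$ is similar to $P$, because $(\phi_{j}(a)-a)/(c-a)=\lambda_{j}$ and the Proposition applies.

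Starting from a base set $B$ with a distinguished point $c\in B$, I would form
\[
A=B\cup\bigcup_{j=3}^{k}\phi_{j}(B).
\]
This is a union of $k-1$ similar copies of $B$ (the identity copy and the $\phi_{j}(B)$), all passing through the common fixed point $c$, so generically $|A|=(k-1)|B|-(k-2)$. The copies of $P$ in $A$ include (i) the $S_{P}(B)$ copies inside each of the $k-1$ pieces, for $(k-1)S_{P}(B)$ in total, and (ii) for each $a\in B\setminus\{c\}$ the straddling copy $\{a,c,\phi_{3}(a),\ldots,\phi_{k}(a)\}$, which uses $a\in B$, the center $c$, and the images $\phi_{j}(a)\in\phi_{j}(B)$, and is therefore contained in no single piece. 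Hence $S_{P}(A)\ge(k-1)S_{P}(B)+|B|-1$. Writing $N=|B|$ and $|A|\approx(k-1)N$, the recursion $S((k-1)N)\ge(k-1)S(N)+N-1$ unwinds (dividing by $(k-1)^{t}$) to $S_{P}(A)=\Omega(|A|\log|A|)$ after $t=\Theta(\log n)$ iterations; equivalently every point lands in $\Theta(\log n)$ copies, one per scale. Beginning the recursion from a base containing a single copy of $P$ together with a generic center, and padding with isolated generic points to reach an arbitrary $n\ge|P|$, then yields the claimed $cn\log n$ with $c=c(P)$.

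It remains to keep the set parallelogram-free and with no collinear triple throughout the recursion. The genuinely free parameters are the generic choices of the centers $c$ (and of one or more generic base points), and I would argue, exactly along the lines of Lemma \ref{lem:tech nomcoll}, that the values for which some step produces three collinear points or a parallelogram lie in a finite union of proper algebraic sets, hence a measure-zero set; since for a fixed target $n$ there are only finitely many such conditions, a good choice of parameters exists.

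The main obstacle is precisely this last point. Because the straddling copies force the $\phi_{j}$ to be rigidly determined by $P$, I cannot perturb the similarities themselves, so I must verify that no parallelogram condition is an \emph{identity} in the remaining free parameters. This is where the hypothesis that $P$ is parallelogram-free must enter, together with the freedom to take each center $c$ generic (so that every descendant point depends nontrivially on $c$, and thus every parallelogram equation involving such a point is a nontrivial algebraic constraint). The delicate step the argument must handle with care is ruling out a parallelogram that is forced identically among points descending only from the original copy of $P$ through the fixed maps $\phi_{j}$; I expect this to be the crux, and to require either a careful choice of the labelling and centers or a direct verification from the parallelogram-freeness of $P$.
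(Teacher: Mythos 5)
Your counting recursion is sound: the star construction $A=B\cup\bigcup_{j=3}^{k}\phi_{j}(B)$ with $\phi_{j}(z)=(1-\lambda_{j})z+\lambda_{j}c$ does produce $(k-1)S_{P}(B)+|B|-1$ copies of $P$ in a set of size about $(k-1)|B|$, and unwinding gives $\Omega(n\log n)$. But the proof has a genuine gap exactly where you flag it, and the gap is not a routine adaptation of Lemma \ref{lem:tech nomcoll}. In your construction the attaching similarities $\phi_{j}$ are rigid (their linear parts $1-\lambda_{j}$ are forced by $P$), and the only free parameter per level is the single center $c$. A parallelogram condition $q_{1}-q_{2}=q_{4}-q_{3}$ among points $q=(1-\lambda_{j})b+\lambda_{j}c$ is affine in $c$ with coefficient $\lambda_{j_{1}}-\lambda_{j_{2}}+\lambda_{j_{3}}-\lambda_{j_{4}}$, and this coefficient vanishes whenever the four points pair up by branch (e.g.\ two points in $\phi_{i}(B)$ and two in $\phi_{j}(B)$). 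In that case the condition collapses to
\[
(1-\lambda_{i})(b_{1}-b_{2})=(1-\lambda_{j})(b_{4}-b_{3}),
\]
a constraint on $B$ and $P$ alone, independent of $c$. This is not excluded by $B$ being parallelogram-free: it asks that no two segments of $B$ have ratio $(1-\lambda_{j})/(1-\lambda_{i})$, a property you would have to carry as a strengthened inductive hypothesis (and verify is achievable by the earlier generic choices), since $B$ is itself the output of the previous level. Similar $c$-independent conditions arise for collinear triples. So "almost all $c$ work, as in Lemma \ref{lem:tech nomcoll}" is not available; the measure-zero argument only disposes of the conditions that genuinely involve $c$.

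The paper avoids this by building the free parameters into the attaching maps themselves: it takes $Q=\bigcup_{p\in P}\bigl(up+(vp-p+1)A\bigr)$, which is simultaneously a union of $|P|$ similar copies of $A$ (one per $p$, each scaled by the $p$-dependent factor $vp-p+1$) and a union of $|A|$ similar copies of $P$ (namely $a+(u+va-a)P$), giving $S_{P}(Q)\ge|P|S_{P}(A)+|A|$ with $|Q|=|A||P|$ — the same recursion as yours. Because there are two free complex parameters $u,v$ and the scale factor varies with $p$, every degeneracy condition becomes an affine equation in $(u,v)$, and the simultaneous vanishing of its coefficients is ruled out directly from the parallelogram-freeness of $P$ and of $A$ (Lemma \ref{lem:tech no para}); no extra inductive invariant is needed. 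To complete your argument you would either need to prove the stronger inductive statement about segment ratios of $A_{t}$, or modify the construction so that the similarities attaching the pieces carry a free parameter, at which point you have essentially rediscovered the paper's construction.
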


For every pattern $P$, we recursively construct a parallelogram-free point set
with many occurrences of $P$. For any $u,v\in\mathbb{C}$, we define%
\begin{equation}
Q(P,A,u,v)=\bigcup_{p\in P}\left(  up+\left(  vp-p+1\right)  A\right)
=\bigcup_{p\in P}\bigcup_{a\in A}\left(  up+\left(  vp-p+1\right)  a\right)
\text{.} \label{eqn:Qdef}%
\end{equation}
Almost all selections of $u$ and $v$ yield a set $Q=Q(P,A,u,v)$ that is
parallelogram-free and such that all the terms in the double union are
pairwise different. The proof of this technical fact is given by next lemma.

\begin{lemma}
\label{lem:tech no para}Let $A$ and $P$ be parallelogram-free sets. If
$\mathcal{S}$ is the set of points $(u,v)\in\mathbb{C}^{2}$ for which
$Q=Q(P,A,u,v)$ satisfies that $\left\vert Q\right\vert <\left\vert
A\right\vert \left\vert P\right\vert $, $Q$ has three collinear points, or $Q$
has a parallelogram; then $\mathcal{S}$ has zero Lebesgue measure.
\end{lemma}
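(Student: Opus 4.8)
The plan is to mirror the proof of Lemma~\ref{lem:tech nomcoll}: exhibit $\mathcal{S}$ as a finite union of sets, each of real dimension at most three inside $\mathbb{C}^2\cong\mathbb{R}^4$, so that each has zero Lebesgue measure. The starting observation is that every point of $Q$ is a \emph{complex-affine-linear} function of the parameters: writing $q_{p,a}(u,v)=up+(vp-p+1)a$, one has
\[
q_{p,a}(u,v)=p\cdot u+(pa)\cdot v+(1-p)a ,
\]
so for fixed indices the coefficients of $u$ and $v$ are $p$ and $pa$, and the constant term is $(1-p)a$. Throughout I use that both $A$ and $P$ are parallelogram-free, i.e.\ neither contains a collinear triple nor the four vertices of a parallelogram, and that it suffices to treat each of the finitely many choices of index tuples separately.

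The conditions $\left\vert Q\right\vert<\left\vert A\right\vert\left\vert P\right\vert$ and ``$Q$ contains a parallelogram'' are \emph{holomorphic}. Indeed, a coincidence $q_{p,a}=q_{p',a'}$ is a single complex-linear equation in $(u,v)$, and four points $q_1,q_2,q_3,q_4$ form a parallelogram exactly when one of the three diagonal-bisection relations (e.g.\ $q_1+q_3=q_2+q_4$) holds, again complex-linear in $(u,v)$. For a fixed index tuple such an equation either is nontrivial, and then cuts out a proper complex-affine line (real dimension two, measure zero), or holds identically on $\mathbb{C}^2$. I would rule out the identical case using the hypotheses: for a coincidence, vanishing of all three coefficients forces $(p,a)=(p',a')$. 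For a parallelogram, equating the $u$-, $v$-, and constant-coefficients gives $p_1+p_3=p_2+p_4$, then $p_1a_1+p_3a_3=p_2a_2+p_4a_4$, and the constant coefficient, reduced by the $v$-relation, yields $a_1+a_3=a_2+a_4$. A short case analysis according to how the $a_i$ (resp.\ $p_i$) coincide then forces either a parallelogram among four distinct points of $A$, a parallelogram among four distinct points of $P$ (this occurs when all $a_i$ collapse to a common value $a$, so that the points lie in the similar copy $(u+(v-1)a)P+a$ and $p_1+p_3=p_2+p_4$ is the parallelogram relation in $P$), or a collinear triple; each contradicts the parallelogram-free hypothesis. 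This casework is the main obstacle.

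For the condition ``$Q$ has three collinear points'' I follow the device of Lemma~\ref{lem:tech nomcoll}. If $q_{p_1,a_1},q_{p_2,a_2},q_{p_3,a_3}$ are collinear, then $q_3-q_1=\lambda(q_2-q_1)$ for some real $\lambda\neq0,1$; since the $q_i$ are complex-affine in $(u,v)$, this is a complex-linear equation in $(u,v)$ whose three coefficients are affine in $\lambda$. For each fixed $\lambda$ it defines a proper complex line unless the $u$- and $v$-coefficients both vanish; the $u$-coefficient vanishing reads $p_3-p_1=\lambda(p_2-p_1)$ with $\lambda\in\mathbb{R}$, which for distinct $p_i$ asserts that $p_1,p_2,p_3$ are collinear, impossible in $P$, while the cases with coinciding $p_i$ reduce, via the remaining coefficients, to a collinear triple in $A$ or to coincident index pairs. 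Hence for every real $\lambda$ the solution set is a proper complex line, and their union over $\lambda\in\mathbb{R}$ is a real-analytic one-parameter family of complex lines, of total real dimension at most three. Taking the finite union over all relevant index pairs, triples, and quadruples shows that $\mathcal{S}$ has zero Lebesgue measure.
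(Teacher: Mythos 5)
Your proposal is correct and follows essentially the same route as the paper's proof: each degeneracy is written as a complex-affine equation in $(u,v)$ (with a real parameter $\lambda$ for the collinearity condition), and the parallelogram-free hypotheses on $A$ and $P$ are used to rule out identically vanishing coefficients, leaving a finite union of sets of real dimension at most three. The coefficient casework you flag as ``the main obstacle'' is precisely the content of the paper's argument and does close as you describe: the only outcomes besides a parallelogram or collinear triple in $A$ or in $P$ are coincident index pairs, which contradict the distinctness of the points involved.
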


We defer the proof of this lemma and instead proceed to bound the number of
similar copies of $P$ in $Q$.

\begin{lemma}
\label{lem: noparal}If $A$ and $P$ are finite parallelogram-free sets , and
$Q=Q(P,A,u,v)$ defined in (\ref{eqn:Qdef}) satsfies that $\left\vert
Q\right\vert =\left\vert A\right\vert \left\vert P\right\vert $, then%
\[
S_{P}(Q)\geq\left\vert P\right\vert S_{P}(A)+\left\vert A\right\vert \text{.}%
\]

\end{lemma}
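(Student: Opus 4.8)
The plan is to exhibit two families of similar copies of $P$ inside $Q=Q(P,A,u,v)$ and argue they are all distinct, giving the count $|P|\,S_P(A)+|A|$. Recall that $Q=\bigcup_{p\in P}\bigl(up+(vp-p+1)A\bigr)$, so each point of $Q$ is of the form $up+(vp-p+1)a$ for a unique pair $(p,a)\in P\times A$, precisely because we assume $|Q|=|A||P|$. I will index points of $Q$ by such pairs and test for similar copies using the cross-ratio/ratio criterion established in the Proposition near the start of the excerpt, namely that $\{q_j\}\sim P$ iff $(q_j-q_1)/(q_2-q_1)=(p_j-p_1)/(p_2-p_1)$.

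First I would produce the family coming from copies of $P$ inside $A$. Fix a point $p\in P$; on the slice $up+(vp-p+1)A$ the map $a\mapsto up+(vp-p+1)a$ is an (orientation-preserving) similarity of $A$, so it carries each of the $S_P(A)$ similar copies of $P$ in $A$ to a similar copy of $P$ in $Q$. Letting $p$ range over all $|P|$ points gives at least $|P|\,S_P(A)$ copies; these are genuinely distinct across different $p$ because distinct slices are disjoint (again by $|Q|=|A||P|$). Second, I would produce the family coming from the $P$-structure across slices: fix a single $a\in A$ and consider the $|P|$ points $\{up+(vp-p+1)a : p\in P\}$, one from each slice. I would check directly, via the ratio criterion, that for generic $u,v$ this set is similar to $P$ itself — the term $up$ contributes a $P$-shape while the $(vp-p+1)a$ correction is a controlled perturbation, and the whole expression is affine in $p$ up to the $vp\cdot a$ term, so the similarity holds when the relevant ratios match $P$. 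This gives $|A|$ further copies, one for each $a\in A$.

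The main obstacle, and the step I would spend the most care on, is showing the two families are disjoint and internally counted without overlap, so that the counts add rather than merely bound. The first family lives within single slices (all $k$ points share one value of $p$), whereas a copy in the second family has its $k$ points in $k$ \emph{different} slices; since $|P|\ge 3$, no copy can simultaneously lie in one slice and meet every slice, so the two families cannot coincide. Within the second family, distinct $a$ give distinct copies because they occupy distinct points of each slice. I would lean on the genericity of $(u,v)$ — guaranteed by Lemma \ref{lem:tech no para}, which ensures $|Q|=|A||P|$ and rules out degeneracies — to justify that the cross-slice configuration is exactly similar to $P$ and that no accidental coincidences inflate or merge the copies.

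Assembling these observations yields
\[
S_P(Q)\;\ge\;|P|\,S_P(A)+|A|,
\]
which is the claimed inequality. The delicate point throughout is that every disjointness and similarity assertion ultimately rests on the bijectivity of the pairing $(p,a)\leftrightarrow up+(vp-p+1)a$, i.e.\ on the hypothesis $|Q|=|A||P|$, so I would state explicitly where that hypothesis is invoked and defer the measure-zero exceptional set to Lemma \ref{lem:tech no para}.
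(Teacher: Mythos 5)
Your proof is correct and follows essentially the same route as the paper: the slice family gives $|P|\,S_P(A)$ copies and the cross-slice family gives $|A|$ more, and they are distinct because a copy of the first kind lies in a single slice while one of the second kind meets every slice. One simplification worth noting: no genericity or ``controlled perturbation'' argument is needed for the second family, since $up+(vp-p+1)a=(u+va-a)p+a$ exactly, so $Q=\bigcup_{a\in A}\bigl(a+(u+va-a)P\bigr)$ is literally a union of similar copies of $P$ (with $u+va-a\neq 0$ forced by $|Q|=|A||P|$), which is precisely how the paper phrases it.
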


\begin{proof}
Because $\left\vert Q\right\vert =\left\vert A\right\vert \left\vert
P\right\vert $ it follows that each term in the first union contributes
exactly $S_{P}(A)$ similar copies of $P$, all of them pairwise different. In
addition note that%
\[
Q=\bigcup_{a\in A}\left(  a+\left(  u+va-a\right)  P\right)  \text{.}%
\]
So each term in the new union is a similar copy of $P$, all of them different
and also different from the ones we had counted before. Therefore
$S_{P}(Q)\geq\left\vert P\right\vert S_{P}(A)+\left\vert A\right\vert $.
\end{proof}

We now prove the theorem.

\begin{proof}
[Proof of Theorem \ref{th: para-free sets}]Let $A_{1}=P$ and for $m\geq1$ let
$A_{m+1}$ be the parallelogram-free set $Q$ obtained from Lemma
\ref{lem:tech no para} with $A=A_{m}$. Because $\left\vert A_{m+1}\right\vert
=\left\vert A_{1}\right\vert \left\vert A_{m}\right\vert $, it follows that
$\left\vert A_{m}\right\vert =\left\vert P\right\vert ^{m}$ for all $m$.
Further, by Lemma \ref{lem: noparal}, for every $0\leq k\leq m-2$,
$S_{P}(A_{m-k})\geq\left\vert P\right\vert S_{P}(A_{m-k-1})+\left\vert
A_{m-k-1}\right\vert =\left\vert P\right\vert S_{P}(A_{m-k-1})+\left\vert
P\right\vert ^{m-k-1}$. Thus%
\begin{align*}
S_{P}(A_{m})  &  \geq\left\vert P\right\vert S_{P}(A_{m-1})+\left\vert
P\right\vert ^{m-1}\geq\left\vert P\right\vert ^{2}S_{P}(A_{m-2})+2\left\vert
P\right\vert ^{m-1}\\
&  \geq\cdots\geq\left\vert P\right\vert ^{m-1}S_{P}(A_{1})+(m-1)\left\vert
P\right\vert ^{m-1}=m\left\vert P\right\vert ^{m-1}\text{.}%
\end{align*}
Now, suppose $\left\vert P\right\vert ^{m}\leq n<\left\vert P\right\vert
^{m+1}$ with $m\geq2$. Let $c=1/(2\left\vert P\right\vert ^{2}\log\left\vert
P\right\vert )$, then%
\[
S_{P}^{\nparallel}(n)\geq S_{P}(A_{m})\geq m\left\vert P\right\vert
^{m-1}>\left(  \frac{\log n}{\log\left\vert P\right\vert }-1\right)  \frac
{n}{\left\vert P\right\vert ^{2}}\geq cn\log n\text{.}%
\]
If $\left\vert P\right\vert \leq n<\left\vert P\right\vert ^{2}$ then
$S_{P}^{\nparallel}(n)\geq S_{P}(P)=1>cn\log n$.
\end{proof}

Finally, we present the proof of Lemma \ref{lem:tech no para}.

\begin{proof}
[Proof of Lemma \ref{lem:tech no para}]We show that $\mathcal{S}$ is made of a
finite number of algebraic sets, all of them of real dimension at most three.
This immediately implies that the Lebesgue measure of such a set is zero. For
every $p\in P$ and $a\in A,$ let $q(a,p)=\left(  up+\left(  vp-p+1\right)
a\right)  $. Suppose that $q(a_{1},p_{1})=q(a_{2},p_{2})$ with $(a_{1}%
,p_{1})\neq(a_{2},p_{2})$. Then
\[
\left(  p_{1}-p_{2}\right)  u+\left(  p_{1}a_{1}-p_{2}a_{2}\right)
v+a_{1}(1-p_{1})-a_{2}\left(  1-p_{2}\right)  =0\text{.}%
\]
This is the equation of a complex-line in $\mathbb{C}^{2}$ (with
real-dimension two) unless the coefficients of $u$ and $v$, as well as the
independent term are equal to zero. That is, $p_{1}-p_{2}=0$, $\left(
p_{1}a_{1}-p_{2}a_{2}\right)  =0$, and $a_{1}(1-p_{1})-a_{2}\left(
1-p_{2}\right)  =0$. These equations imply that $(a_{1},p_{1})=(a_{2},p_{2})$
which contradicts our assumption. Thus the set of pairs $(u,v)$ for which
$\left\vert Q\right\vert <\left\vert A\right\vert \left\vert P\right\vert $ is
the union of $\tbinom{\left\vert A\right\vert \left\vert P\right\vert }{2}$
sets of real-dimension two.

Assume that $q(a_{1},p_{1}),q(a_{2},p_{2})$, and $q(a_{3},p_{3})$ are three
collinear points. Thus there is a real $\lambda\neq0,1$ such that
$q(a_{2},p_{2})-q(a_{1},p_{1})=\lambda\left(  q(a_{3},p_{3})-q(a_{1}%
,p_{1})\right)  $. Then%
\begin{multline*}
\left(  p_{2}-p_{1}-\lambda(p_{3}-p_{1})\right)  u+\left(  p_{2}a_{2}%
-p_{1}a_{1}-\lambda(p_{3}a_{3}-p_{1}a_{1})\right)  v+\\
a_{2}\left(  1-p_{2}\right)  -a_{1}(1-p_{1})-\lambda\left(  a_{3}\left(
1-p_{3}\right)  -a_{1}(1-p_{1})\right)  =0\text{.}%
\end{multline*}
For every $\lambda$, the last equation represents a complex-line in
$\mathbb{C}^{2}$. Considering $\lambda$ as a real variable, this
equation represents an algebraic set of real-dimension three. This
happens unless the coefficients of $u$ and $v$, as well as the
independent term are equal to zero. That is,
$p_{2}-p_{1}-\lambda(p_{3}-p_{1})=0$, $p_{2}a_{2}-p_{1}a_{1}-\lambda
(p_{3}a_{3}-p_{1}a_{1})=0$, and $a_{2}-a_{1}-\lambda\left(  a_{3}%
-a_{1}\right)  =0$. If $p_{1},p_{2},p_{3}$ are three different points then,
since no three points in $P$ are collinear, $p_{2}-p_{1}-\lambda(p_{3}%
-p_{1})\neq0$. If any two of $p_{1},p_{2},p_{3}$ are equal and the remaining
is different, then we still have $p_{2}-p_{1}-\lambda(p_{3}-p_{1})\neq0$. Thus
$p_{1}=p_{2}=p_{3}$, and by symmetry $a_{1}=a_{2}=a_{3}$; which contradicts
the fact that we started with three distinct points $q(a_{j},p_{j})$. Thus the
set of pairs $(u,v)$ for which $\left\vert Q\right\vert $ has collinear points
is the union of $\tbinom{\left\vert A\right\vert \left\vert P\right\vert }{3}$
sets of real-dimension three.

Finally, assume that $q(a_{1},p_{1})q(a_{2},p_{2})q(a_{4},p_{4})q(a_{3}%
,p_{3})$ is a parallelogram. That is $q(a_{2},p_{2})-q(a_{1},p_{1}%
)=q(a_{4},p_{4})-q(a_{3},p_{3})$, and thus%
\begin{multline*}
\left(  p_{2}-p_{1}-\left(  p_{4}-p_{3}\right)  \right)  u+\left(  p_{2}%
a_{2}-p_{1}a_{1}-(p_{4}a_{4}-p_{3}a_{3})\right)  v+\\
a_{2}\left(  1-p_{2}\right)  -a_{1}(1-p_{1})-\left(  a_{4}\left(
1-p_{4}\right)  -a_{3}(1-p_{3})\right)  =0\text{.}%
\end{multline*}
Again this equation represents a complex-line in $\mathbb{C}^{2}$, unless the
coefficients of $u$ and $v$, as well as the independent term are equal to
zero. That is, $p_{2}-p_{1}-\left(  p_{4}-p_{3}\right)  =0$, $p_{2}a_{2}%
-p_{1}a_{1}-(p_{4}a_{4}-p_{3}a_{3})=0$, and $a_{2}-a_{1}-\left(  a_{4}%
-a_{3}\right)  =0$. If $p_{1},p_{2},p_{3},p_{4}$ are four different points
then, since $P$ has no parallelograms, $p_{2}-p_{1}-(p_{4}-p_{3})\neq0$. Since
no three points of $P$ are collinear there are two extra possibilities:
$(p_{1},p_{2})=(p_{3},p_{4})$ or $(p_{1},p_{3})=(p_{2},p_{4})$. By symmetry we
also have $(a_{1},a_{2})=(a_{3},a_{4})$ or $(a_{1},a_{3})=(a_{2},a_{4})$. If
$(p_{1},p_{2})=(p_{3},p_{4})$ and $(a_{1},a_{2})=(a_{3},a_{4})$, then
$q(a_{1},p_{1})=q(a_{3},p_{3});$ which contradicts our assumption. Similarly,
if $(p_{1},p_{3})=(p_{2},p_{4})$ and $(a_{1},a_{3})=(a_{2},a_{4})$, then
$q(a_{1},p_{1})=q(a_{2},p_{2})$. Assume $(p_{1},p_{2})=(p_{3},p_{4})$ and
$(a_{1},a_{3})=(a_{2},a_{4})$. Then the equation $p_{2}a_{2}-p_{1}a_{1}%
-(p_{4}a_{4}-p_{3}a_{3})=0$ becomes $(p_{2}-p_{1})(a_{1}-a_{3})=0$. But if
$p_{1}=p_{2}$ then $q(p_{1},a_{1})=q(p_{2},a_{2})$, and if $a_{1}=a_{3}$ then
$q(p_{1},a_{1})=q(p_{3},a_{3})$; a contradiction in both cases. The remaining
case when $(p_{1},p_{3})=(p_{2},p_{4})$ and $(a_{1},a_{2})=(a_{3},a_{4})$
follows by symmetry. Thus the set of pairs $(u,v)$ for which $\left\vert
Q\right\vert $ has parallelograms is the union of $\tbinom{\left\vert
A\right\vert \left\vert P\right\vert }{4}$ sets of real-dimension two.
\end{proof}

\begin{remark}
If the sets $A$ and $P$ have no two parallel segments then it can be proved,
along the lines of last lemma, that almost all the sets $Q(A,P,u,v)$ are free
of pairs of parallel segments as well.
\end{remark}

\section{Concluding Remarks}

The main relevance of Theorem \ref{th:main} is that it provides an effective
tool to obtain better lower bounds for $S_{P}(n,m)$. Indeed, any of the
results for specific patterns in this paper, can be improved by finding
initial sets with larger indices. For a general pattern $P$, Theorem
\ref{th: general P} is only slightly better than the bound $\Omega
(n^{\log\left(  1+\left\vert P\right\vert \right)  /\log\left\vert
P\right\vert })$ which is obtained using $A=P$ as the initial set. There must
be a way to construct a better initial set.

When the pattern $P=T$ is a triangle we obtained a considerably larger bound
when $T$ is equilateral. The reason behind this is the fact that there is a
multiplying factor of $|\mathrm{Iso}^{+}(T)|=3$ in the index of $i_{T}(A)$. We
could not construct initial sets for arbitrary isosceles triangles that would
improve the bound for scalene triangles. The mirror symmetry of the isosceles
triangles became an obstacle when trying to construct sets with large indices.
For instance, the set $A_{1}$ in Section \ref{sec: scalene} always yields
collinear points when $T$ is an isosceles triangle.

\begin{problem}
For every isosceles triangle $T$, find a set $A$ such that $i_{T}(A)\geq
\log9/\log5$.
\end{problem}

We also had some limitations to construct initial sets when $P=R(k)$ is a
regular polygon. In this case, the index obtained using $P$ itself as initial
set is $\log(2k)/\log k$. We do not have a better initial set for odd $k
\geq7$ and in fact our construction of the initial set $A$ for even-sided
regular $k$-gons in Section \ref{sec: even regular} only gives an index
$i_{R(k)}(A)$ better than $\log(2k)/\log k$ when $k<12$.

\begin{problem}
Let $R(k)$ be a regular $k$-gon. For every even $k\geq12$ and odd $k\geq7$
find a set $A$ such that $i_{R(k)}(A)\geq\log(2k)/\log k$.
\end{problem}

We are confident that there are some yet undiscovered methods for getting
initial sets with larger indices. We would like to find such sets for some
other classes of interesting geometric patterns. For instance, right
triangles, trapezoids, parallelograms, and sets already having some points on
a line, like subsets of a lattice.

According to Theorem \ref{th: m grows}, if we let the number of allowed
collinear points to increase with $n$, then we can achieve $n^{2-\varepsilon}$
similar copies of a pattern $P$. We actually believe this is true even when
$m$ is constant.

\begin{conjecture}
Let $m\geq3$ be a positive integer and $P$ a finite pattern with no $m$
collinear points. For every real $\varepsilon>0,$ there is $N(\varepsilon)>0$
such that for all $n\geq N(\varepsilon)$,%
\[
S_{P}(n,m)\geq n^{2-\varepsilon}\text{.}%
\]

\end{conjecture}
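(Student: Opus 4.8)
The plan is to feed a deliberately \emph{small} initial set $A$ into the iterated Minkowski-sum construction of Theorem~\ref{th:main}. The point is that if $A$ is chosen to have about $m$ points and to be (nearly) extremal for the \emph{unrestricted} function $S_{P}$, then its index $i_{P}(A)$ is forced close to $2$; inequality~(\ref{eq: ineqmain}) from the proof of Theorem~\ref{th:main}, namely
\[
S_{P}(n,m)\geq\frac{1}{I}\left(\left(\frac{n}{|A|}\right)^{i_{P}(A)}-n\right),
\]
then transports this near-quadratic index into a near-quadratic lower bound for $S_{P}(n,m)$. Since $|A|<m$, the set $A$ automatically has no $m$ collinear points, so it is legitimate input for the construction, and the whole argument reduces to estimating the exponent of $n$ that this inequality produces.

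First I would build $A$, splitting on the Laczkovich--Ruzsa dichotomy (Theorem~\ref{th:LaczRuzsa}). If $P$ is cross-algebraic, then $S_{P}(N)=\Theta(N^{2})$, so there is a set $A$ with $|A|=\lceil m\rceil-1<m$ and $|A|+S_{P}(A)\ge c\,m^{2}$; hence $i_{P}(A)\ge\log(cm^{2})/\log m=2+\log c/\log m$. If $P$ is cross-transcendental, the slightly subquadratic Elekes--Erd\H{o}s bound (Theorem~\ref{th:ElekesErdos}) still yields $A$ with $|A|+S_{P}(A)\ge c\,m^{2-c_{2}(\log m)^{-c_{3}}}$, so $i_{P}(A)\ge 2-c_{2}(\log m)^{-c_{3}}+\log c/\log m$. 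In either case $i_{P}(A)\to 2$ as $m\to\infty$, which is exactly where the hypothesis $m(n)\to\infty$ enters.

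Next I would substitute this $A$ into the displayed inequality. Writing $m^{-i_{P}(A)}=n^{-i_{P}(A)(\log m/\log n)}$, the effective exponent of $n$ becomes $i_{P}(A)\bigl(1-\log m/\log n\bigr)$, so the penalty for the size of $A$ is governed by the ratio $\log m/\log n$. To control it I would first dispose of the regime where $m$ grows too fast: since $S_{P}(n,m)\ge S_{P}(n,m')$ for $m'\le m$, it suffices to prove the bound with $m$ replaced by $\min\{m,\,e^{\sqrt{\log n}}\}$, a quantity that still tends to infinity. Under the resulting assumption $\log m\le\sqrt{\log n}$ one has $\log m/\log n\le 1/\log m$, and combining this with the index estimates gives an exponent of the form $2-c_{1}/\log m$ (cross-algebraic) or $2-c_{2}(\log m)^{-c_{3}}-c_{1}/\log m$ (cross-transcendental). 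Both tend to $2$, so for each fixed $\varepsilon>0$ they exceed $2-\varepsilon$ once $m$ (equivalently $n$) is large; the subtracted term $-n$ and the factor $1/I$ are then absorbed past a threshold $N_{0}(\varepsilon,P)$.

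The crux, and the step I expect to need the most care, is the balancing tension inside the inequality: making $i_{P}(A)$ approach $2$ demands $|A|\approx m\to\infty$, yet the construction only improves on the trivial linear bound when $|A|$ is small compared with $n$ --- if $m$ were comparable to $n$ the factor $n/|A|$ would be near $1$ and the whole iteration would collapse. The monotonicity reduction capping $m$ at $e^{\sqrt{\log n}}$ is what resolves this: it simultaneously forces $\log m/\log n$ down to $o(1/\log m)$, so that the size-penalty is dominated by the index gain, while still letting $m\to\infty$ so that the exponent genuinely converges to $2$. Once this cap is in place, the remaining manipulations are routine logarithmic estimates.
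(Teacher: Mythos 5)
There is a genuine gap, and it lies at the very start: you are proving a different statement than the one asked. The statement is the paper's \emph{Conjecture}, in which $m\geq3$ is a \textbf{fixed} positive integer; it is an open problem, not a theorem of the paper. What you have written is, almost step for step, the paper's own proof of Theorem~\ref{th: m grows}, which requires the extra hypothesis $m=m(n)\rightarrow\infty$ --- a hypothesis you even cite explicitly (``which is exactly where the hypothesis $m(n)\to\infty$ enters''), but which the conjecture does not grant you. For constant $m$, your capping reduction collapses: $\min\{m,\,e^{\sqrt{\log n}}\}=m$ for all large $n$, so the capped quantity does \emph{not} tend to infinity. Your initial set $A$ then has at most $m-1$ points with $m$ fixed, its index $i_{P}(A)$ is a fixed constant strictly less than $2$, and inequality~(\ref{eq: ineqmain}) delivers an exponent of the form $2-c_{1}/\log m$ that is bounded away from $2$. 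The conjectured bound $n^{2-\varepsilon}$ must hold for \emph{every} $\varepsilon>0$, and your argument fails as soon as $\varepsilon<c_{1}/\log m$.

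The failure is not repairable within this framework. To run the iteration of Theorem~\ref{th:main} you would need, for each $\varepsilon>0$, a finite set $A$ with no $m$ collinear points and $i_{P}(A)\geq 2-\varepsilon$; producing such sets with $m$ fixed is essentially the conjecture itself (a finite set of index close to $2$ already realizes the near-quadratic count at its own scale), so the approach is circular. The paper says this in its concluding remarks: ``A proof of this conjecture cannot follow from Theorem~\ref{th:main}, so such a proof will require a different way of constructing sets in general position and with many similar copies of the pattern $P$.'' Moreover, Theorem~\ref{th:Abregoetal} shows why the tension is real: for fixed $m$, any $n$-set with $S_{P}\geq cn^{2}$ must contain $m$ collinear points, so $S_{P}(n,m)=o(n^{2})$, and closing the gap between $o(n^{2})$ and $n^{2-\varepsilon}$ is precisely the open difficulty. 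In short: your write-up is a correct reconstruction of Theorem~\ref{th: m grows}, but it does not prove, and cannot be adapted by these means to prove, the stated conjecture.
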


A proof of this conjecture cannot follow from Theorem \ref{th:main}, so such a
proof will require a different way of constructing sets in general position
and with many similar copies of the pattern $P$. On the other hand, we believe
that the construction in Theorem \ref{th: para-free sets} for the function
$S_{P}^{\nparallel}(n)$ is close to optimal. Here we believe that a stronger
upper bound is needed.

\begin{conjecture}
Let $P$ be a parallelogram-free pattern. For every real $\varepsilon>0$, there
is $N(\varepsilon)>0$ such that for all $n\geq N(\varepsilon)$,%
\[
S_{P}^{\nparallel}(n)\leq n^{1+\varepsilon}\text{.}%
\]

\end{conjecture}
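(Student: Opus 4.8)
The plan is to first exploit the defining hypotheses to put $A$ into a rigid additive form, and then to reduce the counting of copies of $P$ to a one-parameter question about difference sets. Suppose $A$ is parallelogram-free and in general position with $|A|=n$. If two ordered pairs of points had the same difference vector, say $a_2-a_1=a_4-a_3\neq 0$, then either the four points are distinct and span a parallelogram, or two of them coincide and the remaining three are collinear (the shared point being a midpoint); both possibilities are excluded. Hence all $n(n-1)$ nonzero differences of $A$ are distinct, that is, $A$ is a planar \emph{Sidon set}, and $D:=A-A$ satisfies $|D|=n(n-1)+1$. Next, fix three non-collinear points of $P$ and let $\lambda\notin\mathbb{R}$ be the complex ratio $(p_3-p_1)/(p_2-p_1)$ they determine. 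Every similar copy of $P$ in $A$ restricts to a similar copy of this triangle, and since three non-collinear points determine the similarity, distinct copies restrict to distinct triangles; therefore $S_P(A)\le N_\lambda$, where $N_\lambda$ counts ordered triples $(a_1,a_2,a_3)\in A^3$ with $a_3-a_1=\lambda(a_2-a_1)$. It thus suffices to prove $N_\lambda\le n^{1+\varepsilon}$ for every fixed non-real $\lambda$ and every planar Sidon set.

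The second step reformulates $N_\lambda$ as a difference-set quantity. Writing $S_a=\{a'-a:a'\in A,\ a'\neq a\}$ for the star of differences at $a$, a shape-$\lambda$ triangle with apex $a_1=a$ corresponds to a vector $d\in S_a$ with $\lambda d\in S_a$, so $N_\lambda=\sum_{a\in A}|S_a\cap\lambda^{-1}S_a|\le |D\cap\lambda^{-1}D|$. In words, $N_\lambda$ is bounded by the number of pairs of differences of $A$ related by multiplication by the fixed scalar $\lambda$. The $K_{2,2}$-free (Zarankiewicz) argument \cite{KST} already used for the $O(n^{3/2})$ bound amounts to the estimate $|D\cap\lambda^{-1}D|=O(n^{3/2})$; to reach $n^{1+\varepsilon}$ one must show that $D$ contains almost no $\lambda$-proportional pairs. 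The natural way to organize this is to note that multiplication by $\lambda$ is a bijection of $\mathbb{C}$, so the elements of $D$ split into orbits $\{\lambda^{i}d\}$, and $|D\cap\lambda^{-1}D|$ equals the number of differences whose $\lambda$-successor again lies in $D$; long runs of such successors are precisely similar copies inside $A$ of a long geometric-progression pattern of ratio $\lambda$.

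The plan for the third and decisive step is to bound these runs by additive-combinatorial means: a length-$\ell$ run $d,\lambda d,\dots,\lambda^{\ell}d$ inside a common star forces $\ell+2$ points of $A$ whose pairwise differences are the distinct vectors $\lambda^{j}(\lambda^{i}-1)d$, and one would try to show—via a higher-energy estimate, or a sum-product / Szemer\'edi--Trotter input controlling how a Sidon set interacts with its own dilation by $\lambda$—that the total number of $\lambda$-proportional pairs is $n^{1+o(1)}$. This last step is where I expect the real difficulty to lie, and it is the reason the statement is only conjectural. The parallelogram-free hypothesis forbids \emph{equal} differences, but it places no direct obstruction on a \emph{single} pair of $\lambda$-proportional differences; consequently the Sidon condition supplies no local obstacle to the configurations being counted, and any gain must come from a global structural constraint on the difference set of a planar Sidon set under dilation. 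Producing such a constraint—or, alternatively, a fundamentally different counting scheme that does not pass through $|D\cap\lambda^{-1}D|$—appears to require genuinely new ideas, beyond the extremal-graph and Minkovski-sum tools used elsewhere in this paper.
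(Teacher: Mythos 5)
You have not proved the statement, and you say as much yourself: your third step is a program (``one would try to show\ldots'') resting on an unproved energy/sum--product input. That is consistent with the paper, which offers no proof either --- this statement appears there as an open \emph{conjecture}, immediately after the authors prove the two-sided bounds $\Omega(n\log n)\leq S_{P}^{\nparallel}(n)\leq n^{3/2}+n$. Your first two reductions are correct and worth keeping: parallelogram-freeness together with general position does force all $n(n-1)$ nonzero differences of $A$ to be distinct (your case analysis for a repeated point, via the midpoint/collinearity argument, is right), so $A$ is a planar Sidon set; and since each $d\in D\setminus\{0\}$ then lies in exactly one star $S_{a}$, the bound $S_{P}(A)\leq N_{\lambda}=\sum_{a\in A}\lvert S_{a}\cap\lambda^{-1}S_{a}\rvert$ is legitimate (injectivity of copy $\mapsto$ triple holds because an orientation-preserving similarity is determined by the images of two points).

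The concrete gap, beyond the admitted incompleteness, is that your relaxation from $N_{\lambda}$ to $\lvert D\cap\lambda^{-1}D\rvert$ cannot work: the estimate $\lvert D\cap\lambda^{-1}D\rvert\leq n^{1+o(1)}$ you set out to prove is \emph{false} for parallelogram-free sets. Take a generic planar Sidon set $B$ with $\lvert B\rvert=n/2$ and let $A=B\cup(\lambda B+t)$ for generic $t\in\mathbb{C}$; the coincidences that would create a collinear triple or a repeated difference in $A$ are nontrivial algebraic conditions on $(B,t)$ for the fixed $\lambda$, so generically $A$ is parallelogram-free, yet every difference $d$ of $B$ satisfies $\lambda d\in D$, whence $\lvert D\cap\lambda^{-1}D\rvert\geq\tfrac{n}{2}\left(\tfrac{n}{2}-1\right)=\Theta(n^{2})$. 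Note that in this example $N_{\lambda}$ itself stays small, because $d\in S_{b}$ while $\lambda d$ lies in the star of $\lambda b+t\neq b$: the common-apex constraint is exactly what you discard in passing to $\lvert D\cap\lambda^{-1}D\rvert$, and it is where the entire difficulty lives. Relatedly, your claim that the paper's $K_{2,2}$ argument ``amounts to'' $\lvert D\cap\lambda^{-1}D\rvert=O(n^{3/2})$ is inaccurate --- that argument bounds the same-apex count $N_{\lambda}$ (the forbidden parallelogram there is $a_{13}a_{14}a_{24}a_{23}$, built from a common pair of apices), not the unrestricted set of $\lambda$-proportional difference pairs. So any viable attack must be reformulated to retain the apex structure, and neither the Sidon property alone nor the tools in the paper suffice; the conjecture remains open.
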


\end{document}